\newcommand{\IF}{\mathbb F}
\newcommand{\IQ}{\mathbb Q}
\newcommand{\IN}{\mathbb N}
\newcommand{\IR}{\mathbb R}
\newcommand{\IC}{\mathbb C}
\newcommand{\w}{\omega}
\newcommand{\IZ}{\mathbb Z}
\newcommand{\pr}{\mathrm{pr}}
\newcommand{\Fix}{\mathrm{Fix}}
\newtheorem{theorem}{Theorem}
\newtheorem{corollary}{Corollary}
\newtheorem{lemma}{Lemma}
\newtheorem{claim}{Claim}
\newtheorem{proposition}{Proposition}
\newtheorem{problem}{Problem}
\newtheorem{conjecture}{Conjecture}
\theoremstyle{definition}
\newtheorem{definition}{Definition}
\newtheorem{example}{Example}
\newtheorem{remark}{Remark}
\title{A universal coregular countable second-countable space}
\author{Taras Banakh and Yaryna Stelmakh}
\address{T.Banakh: Ivan Franko National University of Lviv (Ukraine), and Jan Kochanowski University in Kielce (Poland)}
\email{t.o.banakh@gmail.com}
\address{Y. Stelmakh: Ivan Franko National University of Lviv (Ukraine)}
\email{yarynziya@ukr.net}
\subjclass{54F65; 54G15; 51H10; 37B05; 57N20}
\keywords{Hausdorff countable connected space, the infinite rational projective space $\mathbb Q\mathsf P^\infty$, superconnected space, coregular space, second-countable space, topologically homogeneous space, quotient space, orbit space, projective space}
\begin{document}
\begin{abstract} A Hausdorff topological space $X$ is called {\em superconnected} (resp. {\em coregular}) if for any nonempty open sets $U_1,\dots U_n\subseteq X$, the intersection of their closures $\overline U_1\cap\dots\cap\overline U_n$ is not empty (resp. the complement $X\setminus (\overline U_1\cap\dots\cap\overline U_n)$ is a regular topological space). A canonical example of a coregular superconnected space is the projective space $\mathbb Q\mathsf P^\infty$ of the topological vector space $\mathbb Q^{<\omega}=\{(x_n)_{n\in\omega}\in \mathbb Q^{\omega}:|\{n\in\omega:x_n\ne 0\}|<\omega\}$ over the field of rationals $\mathbb Q$. 
The space $\mathbb Q\mathsf P^\infty$ is the quotient space of $\mathbb Q^{<\omega}\setminus\{0\}^\omega$ by the equivalence relation $x\sim y$ iff $\mathbb Q{\cdot}x=\mathbb Q{\cdot}y$.

We prove that every countable second-countable coregular space is homeomorphic to a subspace of $\mathbb Q\mathsf P^\infty$, and a topological space $X$ is homeomorphic to 
$\mathbb Q\mathsf P^\infty$ if and only if $X$ is countable, second-countable, and admits a decreasing sequence of closed sets $(X_n)_{n\in\omega}$ such that (i) $X_0=X$, $\bigcap_{n\in\omega}X_n=\emptyset$, (ii) for every $n\in\omega$ and a nonempty open set $U\subseteq X_n$ the closure $\overline U$ contains some set $X_m$, and (iii) for every $n\in\omega$ the complement $X\setminus X_n$ is a regular topological space. Using this topological characterization of $\mathbb Q\mathsf P^\infty$ we find topological copies of the space $\mathbb Q\mathsf P^\infty$ among quotient spaces, orbit spaces of group actions, and projective spaces of topological vector spaces over countable topological fields.
\end{abstract}
\maketitle

\section{Introduction}

A topological space $X$ is called {\em functionally Hausdorff\/} if for any distinct points $x,y\in X$ there exists a continuous function $f:X\to\IR$ such that $f(x)\ne f(y)$. 
It is easy to see that each countable functionally Hausdorff space is (totally) disconnected. On the other hand, there are many examples of countable Hausdorff spaces which are connected and even superconnected. A topological space $X$ is defined  to be {\em superconnected} if for any nonempty open sets $U_1,\dots,U_n$ in $X$ the intersection of their closures $\overline U_{\!1}\cap\dots\cap\overline U_{\!n}$ is not empty. It is easy to see that continuous images of superconnected spaces remain superconnected.

 One of standard examples of a superconnected Hausdorff space is the famous   Golomb space, introduced by Brown \cite{Brown} and popularized by Golomb \cite{Golomb59}, \cite{Golomb61}. The {\em Golomb space} is the space $\IN$ of positive integer numbers, endowed with the topology generated by the base consisting of the arithmetic progressions $\IN\cap (a+b\IZ)$ with relatively prime numbers $a,b\in\IN$. Using the Chinese remainder theorem, it can be shown (see \cite{BMT}) that the closure of the arithmetic progression $\IN\cap(a+b\IZ)$ in the Golomb space contains the arithmetic progression $p_1\cdots p_n\IN$, where $p_1,\dots,p_n$ are prime divisors of $b$. This implies that the Golomb space is superconnected. Then any continuous image of the Golomb space is superconnected as well. One of such images is the {\em Kirch space} \cite{Kirch}, which is the space $\IN$ endowed with the topology generated by the base consisting of the arithmetic progressions $\IN\cap (a+b\IZ)$ where the numbers $a,b$ are relatively prime and $b$ is not divided by a square of a prime number. The Kirch space is known to be superconnected and locally connected. In \cite{BST} and \cite{BYT} it was shown that the Golomb and Kirch spaces are topologically rigid, i.e, have trivial homeomorphism group.
 
 Another natural example of a superconnected countable Hausdorff space is the {\em infinite rational projective space} $\IQ \mathsf P^\infty$, which is the projective space of the topological vector space $$\IQ^{<\w}=\{(x_n)_{n\in\w}\in\IQ^\w:|\{n\in\w:x_n\ne 0\}|<\w\}$$over the field $\IQ$ of rational numbers. Here the countable power $\IQ^\w$ is endowed with the Tychonoff product topology. The space $\IQ \mathsf P^\infty$ is defined as the quotient space of the space $\IQ^{<\w}_\circ=\IQ^{<\w}\setminus\{0\}^\w$ by the equivalence relation $x\sim y$ iff $x=\lambda y$ for some nonzero rational number $\lambda$. The superconnectedness of the rational projective space $\IQ\mathsf P^\infty$ was first noticed by Gelfand and Fuks in their paper \cite{GF}. It is easy to show (see also Theorem~\ref{t:hom}) that the infinite rational projective space $\IQ\mathsf P^\infty$ is topologically homogeneous (i.e., for any points $x,y\in \IQ \mathsf P^\infty$ there exists a homeomorphism $h$ of $\IQ \mathsf P^\infty$ such that $h(x)=y$). So, $\IQ \mathsf P^\infty$ is not homeomorphic to the Golomb or Kirch space. Another important property that distinghished the space $\IQ\mathsf P^\infty$ from the Golomb and Kirch spaces is the coregularity of $\IQ\mathsf P^\infty$.
 
A topological space $X$ is called {\em coregular} if $X$ is Hausdorff and for any nonempty open sets $U_1,\dots,U_n\subset X$ the complement $X\setminus (\overline U_{\!1}\cap\dots\cap \overline U_{\!n})$ is a regular topological space.  We recall that a topological space $X$ is {\em regular} if it is Hausdorff and for every closed set $F\subset X$ and point $x\in X\setminus F$ there are disjoint open sets $U,V$ in $X$ such that $x\in U$ and $F\subseteq V$. It is easy to see that a topological space containing more than one point  is regular if and only if it is coregular and not superconnected. In Theorems~\ref{p3} and \ref{t:univ} we shall prove that the space $\IQ\mathsf P^\infty$ is coregular and moreover, it is contains a topological copy of each coregular countable second-countable space. Let us recall that a topological space $X$ is {\em second-countable} if it has a countable base of the topology. 

Observe that for a coregular topological space $X$ with a countable base of the topology $\{U_n\}_{n\in\w}$, the sequence $(X_n)_{n\in\w}$ of the sets $X_n=\overline U_{\!0}\cap\dots\cap \overline U_{\!n}$ has the following two properties: (i) for any nonempty open set $U\subseteq X$ the closure $\overline{U}$ contains some set $X_n$, and (ii) for every $n\in\w$ the complement $X\setminus X_n$ is a regular topological space. This property of the sequence $(X_n)_{n\in\w}$ motivates the following definition.

\begin{definition}\label{d:skeleton} A sequence $(X_n)_{n\in\w}$ of {\em closed} subsets of a topological space $X$ is called
\begin{itemize}
\item {\em vanishing} if $X_0=X$, $\bigcap_{n\in\w}X_n=\emptyset$ and $X_{n+1}\subseteq X_n$ for every $n\in\w$;
\item {\em a coregular skeleton} for $X$ if $(X_n)_{n\in\w}$ is vanishing and has two properties: (i) for every nonempty open set $U\subseteq X$ the closure $\overline U$ contains some set $X_n$ and (ii) for every $n\in\w$ the complement $X\setminus X_n$ is a regular topological space;
\item {\em a superconnecting skeleton} for $X$ if $(X_n)_{n\in\w}$ is vanishing and for every nonempty open set $U\subseteq X$ there exists $n\in\w$ such that $\emptyset\ne  X_n\subseteq\overline U$;
\item {\em an inductively superconnecting skeleton} for $X$ if $(X_n)_{n\in\w}$ is vanishing and for every $n\in\w$ and nonempty open set $U\subseteq X_n$ there exists $m\in\w$ such that $\emptyset\ne  X_m\subseteq\overline U$;
\item {\em superskeleton} if it is both a coregular skeleton and an inductively superconnecting skeleton;
\item {\em canonical superskeleton} if  $(X_n)_{n\in\w}$ is a superskeleton and for every $n\in\w$ the set $X_{n+1}$ is nowhere dense in $X_n$.
\end{itemize}
\end{definition}
It is clear that a topological space $X$ is coregular  if it has a coregular skeleton, and $X$ is superconnected of it has a superconnecting skeleton.

The principal result of this paper is the following topological characterization of the infinite rational projective space $\IQ\mathsf P^\infty$.

\begin{theorem}\label{t:main} A topological space  is homeomorphic to the space $\IQ\mathsf P^\infty$ if and only if it is countable, second-countable and possesses a superskeleton.
\end{theorem}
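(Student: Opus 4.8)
The plan is to establish the two implications separately; the work is all in the ``if'' direction.

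\emph{Necessity.} We must check that $\IQ\mathsf P^\infty$ is countable, second-countable and carries a superskeleton. Countability is clear, and second-countability holds because the quotient map $\pi\colon\IQ^{<\w}_\circ\to\IQ\mathsf P^\infty$ is open (being the quotient by the action of the group $\IQ^{*}$ on $\IQ^{<\w}_\circ$ by homeomorphisms) and $\IQ^{<\w}_\circ$ is second-countable. For the skeleton we take $X_n:=\pi(L_n)$ with $L_n:=\{x\in\IQ^{<\w}_\circ:x_i=0$ for all $i<n\}$; each $L_n$ is closed and $\pi$-saturated, so $X_n$ is closed, $X_0=\IQ\mathsf P^\infty$, $X_{n+1}\subseteq X_n$, every $X_n$ is nonempty, and $\bigcap_{n}X_n=\emptyset$. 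A coordinate shift induces a homeomorphism $X_n\cong\IQ\mathsf P^\infty$, so it suffices to verify for $n=0$ that: $\IQ\mathsf P^\infty\setminus X_n$ is regular, which is the coregularity of $\IQ\mathsf P^\infty$ (Theorem~\ref{p3}); $X_1$ is nowhere dense, because its complement is the affine chart $\{[x]:x_0\ne0\}\cong\IQ^{<\w}$, which is dense since $\pi$ is open; and for every nonempty open $U\subseteq\IQ\mathsf P^\infty$ some nonempty $X_m$ is contained in $\overline U$. The last point refines the classical superconnectedness of $\IQ\mathsf P^\infty$ of Gelfand and Fuks: picking $v\in\pi^{-1}(U)$ together with a basic box $v+W\subseteq\pi^{-1}(U)$ whose defining coordinates all lie in a finite set $F$, one checks that $\overline U\supseteq X_m$ for $m:=1+\max(F\cup\operatorname{supp}v)$, since any $y\in L_m$ is approached by vectors $\lambda v+y'\in\pi^{-1}(U)$ with $\lambda\to0$ rational and $y'$ agreeing with $y$ outside $\{0,\dots,m-1\}$ and tending to $0$ on it. Thus $(X_n)_{n\in\w}$ is in fact a canonical superskeleton.

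\emph{Sufficiency -- reduction.} As $\IQ\mathsf P^\infty$ is itself a space satisfying the hypotheses, it suffices to prove: \emph{any two countable second-countable spaces $X$, $Y$ equipped with superskeletons $(X_n)$, $(Y_n)$ are homeomorphic} (we take all $X_n,Y_n$ nonempty, which is part of the meaning of a vanishing sequence). Two structural facts are forced by the axioms. First, by property~(ii) each $X\setminus X_n$ is regular and second-countable, hence metrizable by Urysohn's theorem, so $X=\bigcup_n(X\setminus X_n)$ is an increasing union of open metrizable subspaces (in particular $X$ is Hausdorff), and likewise for $Y$. Second, by the inductively superconnecting property no $X_n$ has an isolated point: were $p$ isolated in $X_n$, applying that property to $\{p\}$ would give $X_m=\overline{\{p\}}=\{p\}$ for some $m$, hence $X_j=\{p\}$ for all $j\ge m$, contradicting $\bigcap_jX_j=\emptyset$. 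Hence each level difference $X_n\setminus X_{n+1}$ is a nonempty crowded countable metrizable space, i.e.\ a copy of $\IQ$ (Sierpi\'nski's theorem), glued into $X_n$ so that the closure of every nonempty open subset of $X_n$ swallows a whole nonempty $X_m$. (Alternatively, Theorem~\ref{t:univ} already embeds $X$ into $\IQ\mathsf P^\infty$ as a subspace, but promoting this to a homeomorphism still requires the argument below.)

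\emph{Sufficiency -- back-and-forth and the main obstacle.} We construct $h\colon X\to Y$ as the union of an increasing chain of finite partial homeomorphisms. Since these spaces are far from zero-dimensional, finite partial maps do not control the topology, so we carry along each approximation a finite family of ``committed'' basic open sets on each side, matched bijectively by the partial map, together with the record of which committed sets contain each point of the current domain and range, and in the closures of which committed sets each such point lies. By a routine enumeration and bookkeeping, the construction reduces to one \emph{extension lemma}: any such configuration extends so as to absorb a prescribed new point (on either side) or a prescribed new committed basic open set (on either side). This lemma is the crux, and I expect it to be the main obstacle. The difficulty is that closures here are enormous, so the ``type'' that a new point $x\in X$ must be made to realise on the $Y$-side comprises not only finitely many prescribed membership and non-membership relations with the committed open sets -- which can be arranged inside the metrizable open subspace $Y\setminus Y_j$ that must contain the image, using the homogeneity of $\IQ$ within each stratum -- but also prescribed incidences with the committed \emph{closures}: the image must enter certain committed closures and avoid others. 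The ``enter'' demands are satisfied by a downward induction on levels, successively perturbing the candidate image $y$ inside ever-deeper level sets $Y_{j'}$: the inductively superconnecting property provides, for any nonempty open subset of $Y_{j'}$, a whole nonempty $Y_m$ inside its closure, hence points lying arbitrarily deep yet arbitrarily close to $y$, so one can enter the required closures without leaving the neighbourhoods already fixed at earlier stages; the ``avoid'' demands are handled symmetrically using the Hausdorff--metrizable structure to keep $y$ off finitely many committed closures. Running this back-and-forth between an arbitrary such $X$ and $Y=\IQ\mathsf P^\infty$ produces the required homeomorphism.
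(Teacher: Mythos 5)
Your overall architecture is the same as the paper's: necessity is the content of Theorem~\ref{p3} specialized to the singular $\IQ^*$-space $\IQ$ (your direct verification of the canonical superskeleton on $\IQ\mathsf P^\infty$ is fine), and sufficiency is reduced, exactly as in Corollary~\ref{c3}, to a back-and-forth in which finite partial maps carry matched finite families of ``committed'' open sets together with the record of membership and closure-membership of the enumerated points. The genuine gap is the extension lemma you yourself flag as the crux, and the missing ingredient is not the ``downward induction on levels'' you sketch but the normalization of the committed sets that makes any such induction work. In the paper's Theorem~\ref{t:key} every committed pair $U_{n,k},V_{n,k}$ is chosen clopen off a level set, with $\partial U_{n,k}=X_{l}$ and $\partial V_{n,k}=Y_{l}\subseteq\overline{V_{n,k}\cap Y_{\ell(n)}}$ for a level $l$ exceeding by at least $2$ every level used before (conditions (2a),(2c),(2f)), and points are matched so that $\ell_X(x_n)=\ell_Y(y_n)$ (condition (1a)). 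These devices turn the closure part of the ``type'' of a new point into a statement about the level function alone: $x_n\in\overline U_{\!i,j}\setminus U_{i,j}$ iff $\ell_X(x_n)\ge\ell(i,j)$, so the boundary incidences become automatic once levels are preserved, while the ``avoid'' demands become satisfiable because, thanks to the spreading of levels, $\overline V_{\!i,j}\subseteq(Y\setminus Y_{1+\ell(i)})\cup Y_{\ell(i,j)}$ misses the whole stratum $Y_{\ell(i,j)-1}\setminus Y_{\ell(i,j)}$ (Lemma~\ref{l2}). Without pinning the boundaries of committed sets to level sets you cannot guarantee that a candidate image realizing the membership part of the type also realizes the closure part, nor that a later committed set around $y_n$ can be separated from the closures it must avoid; and the requirement that $y_n$ lie on $\partial V_{i,j}$ exactly when $x_n$ lies on $\partial U_{i,j}$ is a boundary condition, not an enter-the-closure condition, which your enter/avoid dichotomy does not capture. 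Note also that the interleaving of the two kinds of demands is nontrivial: the paper must decompose the avoid-set $J(x_n)$ into blocks nested between the levels of the minimal enter-sets and alternate the two perturbations.

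Two smaller points. The existence of committed sets with $\partial V_{n,k}=Y_l\subseteq\overline{V_{n,k}\cap Y_{\ell(n)}}$ is itself a lemma (Lemma~\ref{l:vnk}), proved by gluing infinitely many small clopen pieces accumulating exactly on $Y_l$, using the metrizability and strong zero-dimensionality of $Y\setminus Y_p$ supplied by coregularity. And your reduction tacitly assumes that $X_{n+1}$ is nowhere dense in $X_n$ (you need points of $Y_l\setminus Y_{l+1}$ arbitrarily close to any point of $Y_l$, and your claim that each difference $X_n\setminus X_{n+1}$ is nonempty is false for a general superskeleton, e.g.\ one with repeated terms); one must first pass to a subsequence forming a canonical superskeleton, as in Lemmas~\ref{l:nodense} and~\ref{l:canonical}, before running the back-and-forth.
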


The proof of Theorem~\ref{t:main} will be presented in Section~\ref{s:main}. Since the proof is long and technical, we postpone it till the end of the paper, and first we apply Theorem~\ref{t:main} to finding topological copies of the space $\IQ\mathsf P^\infty$ among  quotient spaces of topological spaces by equivalence relations and orbit spaces of group actions.

\section{Topological copies of the space $\IQ\mathsf P^\infty$ in ``nature''}

Let $E$ be an equivalence relation on a topological space $X$. A subset $A\subseteq X$ is called  {\em $E$-saturated} if $A$ coincides with its {\em $E$-saturation} $EA=\bigcup_{x\in A}Ex$ where $Ex=\{y\in X:(x,y)\in E\}$ is the {\em equivalence class} of a point $x\in X$. Let $X/E$ be the space of $E$-equivalence classes $\{Ex:x\in X\}$ and $q:X\to X/E$ be the map assigning to each point $x\in X$ its equivalence class $Ex\in X/E$. The space $X/E$ carries the quotient topology, consisting of all subsets $U\subseteq X/E$ whose preimage $q^{-1}(U)$ is open in $X$. 

\begin{proposition}\label{p1} Let $E\subseteq X\times X$ and equivalence relation on a topological space $X$ satisfying the following conditions:
\begin{enumerate}
\item the set $E$ is closed in $X\times X$;
\item for any open set $U\subseteq X$ its $E$-saturation $EU$ is open in $X$;
\item $X$ admits a vanishing sequence $(X_n)_{n\in\w}$ of non-empty $E$-saturated closed sets  such that 
\begin{enumerate}
\item for any $n\in\w$ and  nonempty $E$-saturated open set $U\subseteq X_n$, the closure $\overline{U}$ contains some set $X_m$;
\item for any $n\in\w$, $E$-saturated open set $U\subseteq X$ and $x\in U$, there exists an open $E$-saturated neighborhood $V\subseteq X$ of $x$ such that $\overline{V}\subseteq U\cup X_n$.
\end{enumerate}
\end{enumerate} 
Then the quotient space $X/E$ possesses a superskeleton. If the space $X$ is first-countable and $X/E$ is countable, then $X/E$ is homeomorphic to $\IQ\mathsf P^\infty$.
\end{proposition}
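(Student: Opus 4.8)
The plan is to deduce everything from the topological characterization in Theorem~\ref{t:main}: it suffices to exhibit a superskeleton on $Y:=X/E$ and, under the additional hypotheses, to check that $Y$ is countable and second-countable. Let $q:X\to Y$ denote the quotient map. Condition~(2) says exactly that $q$ is an open map (since $q^{-1}(q(U))=EU$ for every $U\subseteq X$), so $q$ is an open continuous surjection; together with condition~(1) this makes $Y$ Hausdorff, because for an open surjection the quotient is Hausdorff precisely when the associated relation is closed in $X\times X$. The one structural fact I will use repeatedly is that an open continuous map $q$ commutes with closures on saturated sets: $q^{-1}(\overline B)=\overline{q^{-1}(B)}$ for every $B\subseteq Y$ (the inclusion $\supseteq$ is continuity, the inclusion $\subseteq$ uses openness), and hence $q(\overline A)=\overline{q(A)}$ whenever $A\subseteq X$ is $E$-saturated.

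With this in hand I would set $Y_n:=q(X_n)$ for $n\in\w$. Since $X_n$ is $E$-saturated, $q^{-1}(Y_n)=X_n$, so each $Y_n$ is non-empty and closed, $Y_0=Y$, $Y_{n+1}\subseteq Y_n$, and $\bigcap_nY_n=\emptyset$ (as $q^{-1}(\bigcap_nY_n)=\bigcap_nX_n=\emptyset$); thus $(Y_n)_{n\in\w}$ is vanishing. To see that it is an inductively superconnecting skeleton, fix $n$ and a non-empty open $U\subseteq Y_n$; then $q^{-1}(U)$ is a non-empty $E$-saturated open subset of $X_n$, so condition~3(a) gives $m$ with $X_m\subseteq\overline{q^{-1}(U)}\subseteq q^{-1}(\overline U)$, whence $\emptyset\ne Y_m\subseteq\overline U$. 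The instance $n=0$ yields property~(i) of a coregular skeleton.

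The substantive step is property~(ii): regularity of $Y\setminus Y_n$ for each $n$. As $Y$ is Hausdorff and $Y\setminus Y_n$ is open in $Y$, it suffices to produce, for a point $y\in Y\setminus Y_n$ and an open $O$ with $y\in O\subseteq Y\setminus Y_n$, a neighborhood of $y$ whose closure in $Y\setminus Y_n$ lies in $O$. I would pass to $X$: the set $q^{-1}(O)$ is $E$-saturated and open, so choosing $x\in q^{-1}(y)$ and applying condition~3(b) with $U=q^{-1}(O)$ produces an open $E$-saturated neighborhood $V$ of $x$ with $\overline V\subseteq q^{-1}(O)\cup X_n=q^{-1}(O\cup Y_n)$. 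Then $q(V)$ is an open neighborhood of $y$, and $q^{-1}(\overline{q(V)})=\overline{q^{-1}(q(V))}=\overline V\subseteq q^{-1}(O\cup Y_n)$, so $\overline{q(V)}\subseteq O\cup Y_n$ and therefore $\overline{q(V)}\cap(Y\setminus Y_n)\subseteq O$. Hence $q(V)\cap(Y\setminus Y_n)$ is the required neighborhood, and $(Y_n)_{n\in\w}$ is a superskeleton for $Y$.

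Finally, if $X$ is first-countable and $Y$ is countable, then $Y$ is first-countable (for an open continuous surjection $q$, the $q$-images of a countable neighborhood base at a preimage point form a countable neighborhood base at the image point), hence second-countable; combined with the superskeleton just constructed, Theorem~\ref{t:main} yields $Y\cong\IQ\mathsf P^\infty$. The only place where care is genuinely needed is the regularity step: one has to move back and forth between $X$ and $X/E$ while keeping track of closures and saturations simultaneously, and this is exactly what the identity $q^{-1}(\overline B)=\overline{q^{-1}(B)}$ for open $q$ is there to make painless.
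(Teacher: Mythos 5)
Your proposal is correct and follows essentially the same route as the paper: push the sequence $(X_n)_{n\in\w}$ forward along the open quotient map $q$, use condition (3a) for the inductively superconnecting property, condition (3b) plus the identity $q^{-1}(\overline{B})=\overline{q^{-1}(B)}$ for the regularity of $Y\setminus Y_n$, and conclude via Theorem~\ref{t:main}. The only difference is that you spell out the coregularity step in detail where the paper merely asserts it, which is a welcome addition rather than a deviation.
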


\begin{proof} Let $Y$ be the quotient space $X/E$. The condition (2) implies that the quotient map $q:X\to Y$ is open. Then for any $E$-saturated closed set $A\subseteq X$ its image $q(A)=Y\setminus q(X\setminus A)$ is closed in $Y$. In particular, for every $n\in\w$ the image $Y_n=q(X_n)$ is a closed subset of $Y$ and hence $(Y_n)_{n\in\w}$ is a vanishing sequence of  nonempty closed sets in $Y$. 

For any $n\in\w$ and nonempty open set $U\subseteq Y_n$ the preimage $q^{-1}(U)$ is an open $E$-saturated set in $X_n$ and by condition (3a), the closure $\overline{q^{-1}(U)}$ contains some set $X_m$. Then $Y_m=q(X_m)\subseteq q(\overline{q^{-1}(U)})\subseteq \overline{q(q^{-1}(U))}=\overline{U}$. This means that  $(Y_n)_{n\in\w}$ is an inductively superconnecting skeleton for the space $Y$.  By \cite[2.4.C]{Eng}, the condition (1,2)  imply that the  the quotient space $Y=X/E$ is Hausdorff. Now the condition (3a),(3b) ensure that the skeleton $(Y_n)_{n\in\w}$ is coregular and hence $(Y_n)_{n\in\w}$ is a superskeleton for $X/E$.
 
If the space $X$ is first-countable and $X/E$ is countable, then the openness of the quotient map $q:X\to X/E$ implies the first-countability of the the quotient space $X/E$. Being countable and first-countable, the space $X/E$ is second-countable. By Theorem~\ref{t:main}, the space $X/E$ is homeomorphic to $\IQ\mathsf P^\infty$. 
\end{proof}

Next, we consider orbit spaces of group actions, which are special examples of quotient spaces. By a {\em group act} we understand a topological space $X$ endowed with an action $\alpha:G\times X\to X$ a group $G$. The action $\alpha$ satisfies the following axioms:
\begin{itemize}
\item for every $g\in G$ the map $\alpha(g,\cdot):X\to X$, $\alpha(g,\cdot):x\mapsto gx:=\alpha(g,x)$, is a homeomorphism of $X$;
\item for the identity $1_G$ of the group $G$ and every $x\in X$ we have $1_Gx=x$;
\item $(gh)x=g(hx)$ for all $g,h\in G$ and $x\in X$.
\end{itemize}
In this case we also say that $X$ is a {\em $G$-space}. We say that a $G$-space $X$  has {\em closed orbits} if for any point $x\in X$ its {\em orbit} $Gx=\{gx:g\in G\}$ is a closed subset of $X$. A subset $A\subseteq X$ is called {\em $G$-invariant} if it coincides with its $G$-saturation $GA=\bigcup_{x\in A}Gx$.
The action of $G$ on $X$ induces the equivalence relation $E=\{(x,gx):x\in X,\;g\in G\}$. The quotient space $X/E$ by this equivalence relation is called the {\em orbit space} of the $G$-space and is denoted by $X/G$. 

\begin{proposition}\label{p2} Let $X$ be a $G$-space with closed $G$-orbits, possessing a  vanishing sequence $(X_n)_{n\in\w}$ of nonempty $G$-invariant closed subsets  such that 
\begin{enumerate}
\item for any $n\in\w$ and nonempty open $G$-invariant set $U\subseteq X_n$, the closure $\overline{U}$ contains some set $X_m$;
\item for any $n\in\w$, point $x\in X\setminus X_n$, and open $G$-invariant neighborhood $U\subseteq X$ of $x\in U$,  there exists an open $G$-invariant neighborhood $V\subseteq X$ of $x$ such that $\overline{V}\subseteq U\cup X_n$.
\end{enumerate} 
Then the orbit space $X/G$ has a superskeleton. If $X$ is first-countable and $X/G$ is countable, then the space $X/G$ is homeomorphic to $\IQ\mathsf P^\infty$.
\end{proposition}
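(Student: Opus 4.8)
The plan is to apply Proposition~\ref{p1} to the orbit equivalence relation $E=\{(x,gx):x\in X,\ g\in G\}$, for which $X/E$ is exactly the orbit space $X/G$. A subset of $X$ is $E$-saturated precisely when it is $G$-invariant, so conditions~(3a) and~(3b) of Proposition~\ref{p1} are essentially conditions~(1) and~(2) of Proposition~\ref{p2}; condition~(2) of Proposition~\ref{p1} holds automatically, since for open $U\subseteq X$ the saturation $EU=GU=\bigcup_{g\in G}gU$ is open (each $g$ being a homeomorphism), equivalently the quotient map $q\colon X\to X/G$ is open; and the sets $X_n$ are $E$-saturated closed. The only condition of Proposition~\ref{p1} requiring genuine work is its condition~(1), the closedness of $E$ in $X\times X$.

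First I would dispatch condition~(3b) of Proposition~\ref{p1}: given $n\in\w$, a $G$-invariant open $U\subseteq X$ and $x\in U$, use $\bigcap_{k}X_k=\emptyset$ together with the nesting of the $X_k$ to choose $m\ge n$ with $x\notin X_m$, and apply condition~(2) of Proposition~\ref{p2} to $m,x,U$ to obtain a $G$-invariant open $V\ni x$ with $\overline V\subseteq U\cup X_m\subseteq U\cup X_n$. Next I would record the skeleton on $X/G$: put $Y_n:=q(X_n)$; openness of $q$ makes each $Y_n$ closed, $(Y_n)_{n\in\w}$ is vanishing because $q^{-1}(\bigcap_nY_n)=\bigcap_nX_n=\emptyset$, and condition~(1) of Proposition~\ref{p2} pushed through the open continuous surjection $q$ shows $(Y_n)_{n\in\w}$ is an inductively superconnecting skeleton (for nonempty open $U\subseteq Y_n$ one has $q^{-1}(U)\subseteq X_n$ open $G$-invariant, hence $\overline{q^{-1}(U)}\supseteq X_m$ for some $m$, and then $Y_m=q(X_m)\subseteq\overline U$).

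I expect the main obstacle to be the Hausdorffness of the orbit space, which in Proposition~\ref{p1} was obtained from \cite[2.4.C]{Eng} via the closedness of $E$ — but closed orbits do \emph{not} force $E$ to be closed, so that route is unavailable, and this is precisely where the closed-orbit hypothesis must enter. The substitute: since the fibre of $q$ over $q(x)$ is the closed orbit $Gx$, the space $X/G$ is $T_1$; and a lifting argument using condition~(2) of Proposition~\ref{p2} shows that, for each $n$, every point $p$ of the open subspace $(X/G)\setminus Y_n$ has a base of closed neighbourhoods there — lift a neighbourhood $W\ni p$ to $q^{-1}(W)$, apply condition~(2) to get a $G$-invariant open $V$ with $p\in q(V)$ and $\overline V\subseteq q^{-1}(W)\cup X_n$, and push forward by $q$, using that $\overline V$ is $G$-invariant so that $q(\overline V)$ is closed, to obtain $\overline{q(V)}\subseteq W\cup Y_n$. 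A $T_1$-space with a base of closed neighbourhoods at every point is regular, so each $(X/G)\setminus Y_n$ is regular; together with the inductively superconnecting skeleton above, this makes $(Y_n)_{n\in\w}$ a superskeleton for $X/G$. Finally, if $X$ is first-countable then so is $X/G$ (as $q$ is open), hence $X/G$ is second-countable once countable, and Theorem~\ref{t:main} gives $X/G\cong\IQ\mathsf P^\infty$. Equivalently, one may observe that $E$ is closed after all — the orbit space being Hausdorff and $q$ open, by the converse part of \cite[2.4.C]{Eng} — and then quote Proposition~\ref{p1} outright.
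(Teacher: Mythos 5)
Your proposal is correct and follows essentially the same route as the paper: deduce $T_1$ from closed orbits, use the openness of $q$ to get the closed vanishing sequence $Y_n=q(X_n)$, derive regularity of each $(X/G)\setminus Y_n$ from condition~(2), conclude Hausdorffness of $X/G$ and hence closedness of $E$, and then invoke Proposition~\ref{p1}. Your explicit fix for the mismatch between condition~(2) here (which assumes $x\notin X_n$) and condition~(3b) of Proposition~\ref{p1} (which quantifies over all $x\in U$) --- passing to $m\ge n$ with $x\notin X_m$ --- is a detail the paper leaves implicit, and is exactly right.
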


\begin{proof} The closedness of orbits implies that the orbit space $Y=X/G$ is a $T_1$-space. Since each open set $U\subseteq X$ has open $G$-saturation $GU=\bigcup_{g\in G}gU$, the quotient map $q:X\to X/G$ is open. Then for every $n\in\w$,  the image $Y_n=q(X_n)$ of the closed $G$-invariant set $X_n$ is a closed subset of  $Y$ and hence $(Y_n)_{n\in\w}$ is a vanishing sequence of nonempty closed sets in $Y$. The condition (2) implies that for every $n\in\w$, the open subspace $Y\setminus Y_n$ of the $T_1$-space $Y$ is regular and hence Hausdorff. Since $\bigcap_{n\in\w}Y_n=\emptyset$, the space $Y$ is Hausdorff and the equivalence relation $$E=\{(x,gx):x\in X,\;g\in G\}=\{(x,y)\in X\times X:q(x)=q(y)\}$$ is closed in $X\times X$. Now we can apply Proposition~\ref{p1} and conclude that $(Y_n)_{n\in\w}$ is a superskeleton in the space $X/G$ (and the space $X/G$ is homeomorphic to $\IQ\mathsf P^\infty$ if  $X$ is first-countable and $X/G$ is countable).
\end{proof}

Now we find topological copies of the space $\IQ\mathsf P^\infty$ among infinite projective spaces of singular $G$-spaces.

\begin{definition}\label{d:singular} A  topological space $X$ endowed with a continuous action $\alpha:G\times X\to X$ of a Hausdorff topological group $G$ is called {\em singular} if it has the following properties:
\begin{itemize}
\item[(i)] the topological space $X$ is regular and infinite;
\item[(ii)] the set $\Fix_G(X)=\{x\in X:Gx=\{x\}\}$ is a singleton;
\item[(iii)] for every $x\in X\setminus\Fix_G(X)$ the map $\alpha_x:G\to X$, $\alpha_x:g\mapsto gx=\alpha(g,x)$, is injective and open;
\item[(iv)] the orbit $Gx$ of every point $x\in X\setminus\Fix_G(X)$ contains the singleton $\Fix_G(X)$ in its closure $\overline{Gx}$;
\item[(v)] for any points $x\in X\setminus \Fix_G(X)$ and $y\in X$, there exists a neighborhood $U\subseteq X$ of $y$ such that for any neighborhood $W\subseteq X$ of the singleton $\Fix_G(X)$, there exists a neighborhood $V\subseteq X$ of $\Fix_G(X)$ such that $\alpha_u(\alpha_x^{-1}(V))\subseteq W$ for every $u\in U$.
\end{itemize}
\end{definition}


\begin{example}\label{ex1} There are many natural examples of singular $G$-spaces: 
\begin{enumerate}
\item The complex plane $\IC$ endowed with the action of the multiplicative group $\IC^*$ of non-zero complex numbers.
\item Any subfield $\IF\subseteq\mathbb C$ endowed with the action of the multiplicative group $\IF^*=\IF\setminus\{0\}$.
\item Ahe real line $\IR$ endowed with the action of the multiplicative group $\IR^*$ of non-zero real numbers.
\item The real line $\IR$ endowed with the action of the multiplicative group $\IR_+$ of positive real numbers.
\item The closed half-line $\overline\IR_+=[0,\infty)$ endowed with the action of the multiplicative group $\IR_+$.
\item The one-point compactification $\overline\IR$ of the space $\IR$ of real numbers endowed with the natural action of the additive group $\IR$.
\item The space $\IQ$ of rationals, endowed with the action of the multiplicative group $\IQ^*$ of non-zero rational numbers.
\item The space $\IQ$ of rationals, endowed with the action of the multiplicative group $\IQ_+$ of positive rational numbers.
\item The one-point compactification $\overline\IZ=\IZ\cup\{+\infty\}$ of the discrete space $\IZ$ endowed with the natural action of the additive group $\IZ$ of integer numbers.
\item The one-point compactification of any non-compact locally compact topological group $G$, endowed with the natural action of the topological group $G$.
\end{enumerate}
\end{example}

Given a singular $G$-space $X$, consider the $G$-space $X^\w$ endowed with the Tychonoff product topology and the coordinatewise action of the group $G$. Let $s$ be the unique point of the singleton $\Fix(X;G)$. We shall be interested in two special  subspaces of $X^\w$:
$$X^{<\w}:=\{x\in X^\w:|\{n\in\w:x(n)\ne s\}|<\w\}\mbox{ \ and \ }X^{<\w}_\circ:=X^{<\w}\setminus\{s\}^\w.$$
The orbit space $X^{<\w}_\circ/G$ is called the {\em infinite projective space} of the singular $G$-space $X$ and is denoted by $X\mathsf P^\infty$. 

If $X=\IF$ is a non-discrete topological field endowed with the action of its multiplicative group $\IF^*$, then $\IF^{<\w}$ is a topological vector space over the field $\IF$ and $\IF \mathsf P^\infty$ is the projective space of the topological vector space $\IF^{<\w}$ in the standard sense. In particular, $\IQ\mathsf P^\infty$ is the projective space of the topological vector space $\IQ^{<\w}$ over the topological field $\IQ$ of rational numbers. 

\begin{theorem}\label{p3} The infinite projective space $X\mathsf P^\infty$ of any  singular $G$-space $X$ possesses a canonical superskeleton. If the singular $G$ space $X$ is countable and metrizable, then its infinite projective space $X\mathsf P^\infty$ is  homeomorphic to the space $\IQ\mathsf P^\infty$.
\end{theorem}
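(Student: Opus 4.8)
The plan is to apply Proposition~\ref{p2} to the $G$-space $X^{<\w}_\circ$, equipped with the coordinatewise $G$-action and the decreasing sequence of closed $G$-invariant subsets
$$Z_n:=\{x\in X^{<\w}_\circ: x(i)=s\ \text{for all}\ i<n\},\qquad n\in\w,$$
where $s$ denotes the unique point of $\Fix_G(X)$. Since $\{s\}$ is closed in the regular space $X$ and $s$ is $G$-fixed, each $Z_n$ is a nonempty closed $G$-invariant set, $Z_0=X^{<\w}_\circ$, and $\bigcap_{n}Z_n=\emptyset$ because every $x\in X^{<\w}_\circ$ has a coordinate $\ne s$; thus $(Z_n)_{n\in\w}$ is a vanishing sequence of nonempty $G$-invariant closed sets. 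It remains to verify that $X^{<\w}_\circ$ has closed $G$-orbits and satisfies conditions (1) and (2) of Proposition~\ref{p2} with respect to $(Z_n)_{n\in\w}$; then Proposition~\ref{p2} gives that $(q(Z_n))_{n\in\w}$ is a superskeleton of $X\mathsf P^\infty=X^{<\w}_\circ/G$, where $q$ is the quotient map. A short argument at the end promotes it to a canonical superskeleton, and the homeomorphism $X\mathsf P^\infty\cong\IQ\mathsf P^\infty$ for countable metrizable $X$ follows from the second assertion of Proposition~\ref{p2}, since in that case $X^{<\w}$ is metrizable (a countable product of metrizable spaces) and $X^{<\w}_\circ$, hence $X\mathsf P^\infty$, is countable.

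First I would record a few consequences of Definition~\ref{d:singular}. For $x\in X\setminus\{s\}$, axiom (iii) makes $\alpha_x:G\to X$ a continuous open injection, hence a homeomorphism onto the open orbit $Gx$; combined with axiom (iv) and disjointness of distinct orbits, this yields $\overline{Gx}=Gx\cup\{s\}$, and in particular $s$ is not isolated in $X$. Next comes a ``simultaneous squeezing'' lemma: for every $x\in X^{<\w}_\circ$ and every neighbourhood $O$ of $s$ there is $g\in G$ with $gx(i)\in O$ for all $i$. Fixing one coordinate $a:=x(i_0)\ne s$, axiom (iv) provides a net $(h_\mu)$ in $G$ with $h_\mu a\to s$, and axiom (v), applied to the pairs $(a,x(i))$ over the finite support of $x$, produces a neighbourhood $V\subseteq O$ of $s$ such that $ga\in V$ forces $gx(i)\in O$ for all $i$; then $g=h_\mu$ works for large $\mu$. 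The same ingredients give the closedness of $G$-orbits in $X^{<\w}_\circ$: if $y\in\overline{Gx}$ then $\mathrm{supp}(y)\subseteq\mathrm{supp}(x)$; equality of supports lets one normalize a coordinate, deduce $h_\mu\to 1_G$, and conclude $y=x$; while a strictly smaller support is impossible, because by (v) the convergence $h_\mu x(i_1)\to s$ on a vanishing coordinate would drag every other coordinate $h_\mu x(i_2)$ to $s$, contradicting $y\in X^{<\w}_\circ$.

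Condition (1) of Proposition~\ref{p2} I would check as follows. Given a nonempty open $G$-invariant $U\subseteq Z_n$, choose $x\in U$ and a basic neighbourhood $W_0=\{z\in Z_n:z(i)\in O_i\ (i\in F)\}\subseteq U$ with $F$ finite and $O_i\ni x(i)$. Let $M:=\max(F\cup\mathrm{supp}(x))$; I claim $Z_{M+1}\subseteq\overline U$. For $y\in Z_{M+1}$ and a basic neighbourhood $P$ of $y$ (which we may assume controls all coordinates in $F$), let $w(i)=x(i)$ for $i\le M$ and $w(i)=g^{-1}y(i)$ for $i>M$, where $g$ is produced by the squeezing lemma so that $gx(i)$ lies in the neighbourhood of $s$ prescribed by $P$ for every $i\le M$; then $w\in W_0\subseteq U$, so $gw\in U$ by $G$-invariance, and $gw$ coincides with $y$ above $M$ and is inside the prescribed neighbourhoods of $s=y(i)$ below $M$, so $gw\in U\cap P$. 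Hence $y\in\overline U$, and (1) holds with $m=M+1$.

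The \textbf{main obstacle} is condition (2). Given $n$, $x\in X^{<\w}_\circ\setminus Z_n$ and an open $G$-invariant neighbourhood $U$ of $x$, put $k:=\min\mathrm{supp}(x)<n$ (so $x(k)\ne s$). Consider the open $G$-invariant chart $C_k:=\pr_k^{-1}(Gx(k))$, the regular subspace $T:=\{z\in C_k: z(k)=x(k)\}$ of $X^{<\w}$, and the continuous $G$-invariant retraction $\rho:C_k\to T$, $\rho(z)=h_z^{-1}z$ with $h_z:=\alpha_{x(k)}^{-1}(z(k))$; note that $\rho^{-1}(V')=GV'$ for open $V'\subseteq T$ and that $U\cap C_k=\rho^{-1}(U\cap T)$. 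Using the regularity of $T$ together with axiom (v) anchored at $x(k)$, I would choose an open $V'\subseteq T$ with $x\in V'$, with $\overline{V'}^{\,T}\subseteq U\cap T$, and with $V'$ constraining each coordinate $i\in\{0,\dots,n-1\}\setminus\{k\}$ to a neighbourhood $O_i\ni x(i)$ so small that $g_\lambda x(k)\to s$ and $t_\lambda\in V'$ force $g_\lambda t_\lambda(i)\to s$ (apply (v) to the pair $(x(k),x(i))$). Set $V:=\rho^{-1}(V')=GV'$, an open $G$-invariant neighbourhood of $x$. To finish, split $\overline V$: if $z\in\overline V\cap C_k$ then, since $\rho$ is continuous and $G$-invariant and $U$ is $G$-invariant, $\rho(z)\in\overline{V'}^{\,T}\subseteq U$, whence $z\in U$; and if $z\in\overline V\setminus C_k$ then $z(k)\in\overline{Gx(k)}\setminus Gx(k)=\{s\}$, and writing $z=\lim g_\lambda t_\lambda$ with $t_\lambda\in V'$ and using $g_\lambda x(k)=g_\lambda t_\lambda(k)\to s$ together with the choice of the $O_i$, one gets $z(i)=s$ for all $i<n$, i.e. $z\in Z_n$; thus $\overline V\subseteq U\cup Z_n$. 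The delicate point is exactly this double constraint on $V'$ --- it must both separate $x$ from the boundary of $U$ (regularity of $T$) and force the ``degenerate'' part of $\overline V$, on which the $k$-th coordinate collapses to $s$, to collapse entirely into $Z_n$ (axiom (v)). Having established closed orbits and (1)--(2), Proposition~\ref{p2} applies; finally, since $s$ is not isolated, $Z_n\setminus Z_{n+1}$ is dense in $Z_n$, so $Z_{n+1}$ is nowhere dense in $Z_n$, and as $q$ restricts to an open surjection $Z_n\to q(Z_n)$, the set $q(Z_{n+1})$ is nowhere dense in $q(Z_n)$; hence $(q(Z_n))_{n\in\w}$ is a canonical superskeleton of $X\mathsf P^\infty$, and for countable metrizable $X$ the second clause of Proposition~\ref{p2} (equivalently, Theorem~\ref{t:main}) gives $X\mathsf P^\infty\cong\IQ\mathsf P^\infty$.
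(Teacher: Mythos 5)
Your proposal is correct and follows essentially the same route as the paper: the same filtration $Z_n=\{x:x_i=s\text{ for }i<n\}$, reduction to Proposition~\ref{p2}, and the same three verifications (closed orbits, the superconnecting condition via the squeezing consequence of axioms (iv)--(v), and the regularity condition via the $G$-invariant retraction onto the slice $\{z:z(k)=x(k)\}$, which is exactly the paper's ``hyperplane'' map $r$). The only differences are presentational (nets in place of explicitly constructed separating neighborhoods), not structural.
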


\begin{proof} Let $s$ be the unique point of the singleton $\Fix_G(X)$. It will be convenient to identify elements $x\in X^{<\w}$ with sequences $(x_n)_{n\in\w}$.

Since the group $G$ acts by homeomorphisms on the space $X^{<\w}_\circ$, for every open set $U\subseteq X^{<\w}_\circ$ the set $GU=\bigcup_{g\in G}gU$ is open. This implies that the quotient map $q:X^{<\w}_\circ\to X\mathsf P^\infty$ is open.

Now using Proposition~\ref{p2}, we shall prove that $X\mathsf P^\infty$  possesses a canonical superskeleton. For every $n\in\w$ consider  the $G$-invariant subspace 
$$X_n=\{x\in X^{<\w}_\circ:\forall i\in n\;\;x_i=s\}$$
of $X^{<\w}_\circ$. 
Observe that $X_n=\pr^{-1}_n(\{s\}^n)$ where $\pr_n:X^{<\w}_\circ\to X^n$, $\pr_n:x\mapsto (x_0,\dots,x_{n-1})$, is the natural projection.

By Definition~\ref{d:singular}(iii), for every $x\in X\setminus\{s\}$ its orbit $Gx$ is an open set in $X$. Consequently, the singleton $\{s\}=X\setminus\bigcup_{x\in X\setminus\{s\}}Gx$ is closed in $X$ and $\{s\}^n$ is closed in $X^n$, which implies that $X_n$ is closed in $X$. 

Taking into account the openness of the quotient map $q:X^{<\w}_\circ\to Y$ and the $G$-invariantness of the closed sets $X_n$, we conclude that for every $n\in\w$ the set $Y_n=q(X_n)$ is closed in $X\mathsf P^\infty$. Therefire, $(Y_n)_{n\in\w}$ is a vanishing sequence of nonempty closed sets in the space $X\mathsf P^\infty$. Since the singleton $\{s\}$ is nowhere dense in $X$ (by Definition~\ref{d:singular}(iv)), for every $n\in\w$ the set $X_{n+1}$ is nowhere dense in $X_n$ and then the set $Y_{n+1}$ is nowhere dense in $Y_n$.

\begin{claim}\label{cl:sing1} For any point $x\in X^{<\w}_\circ$ its orbit $Gx$ is closed in $X^{<\w}_\circ$.
\end{claim}

\begin{proof} Given any point $y\in X^{<\w}_\circ\setminus Gx$, we should find an open neighborhood $V$ of $y$ in $X^{<\w}_\circ$ such that $V\cap Gx=\emptyset$. Since $y\in X^{<\w}_\circ$, the set $\Omega=\{n\in\w:y_n\ne s\}$ is finite and nonempty. If $x_n\notin Gy_n$ for some $n\in\Omega$, then $Gx_n\cap Gy_n=\emptyset$ and by Definition~\ref{d:singular}(iii),  $Gy_n$ is an open neighborhood of $y_n$ in $X$ and hence 
$V=\{v\in X^{<\w}_\circ:v_n\in Gy_n\}$ is an open neighborhood of $y$ that is disjoint with the orbit $Gx$ of $x$. So, we assume that for every $n\in\Omega$, the point $x_n$ belongs to the orbit $Gy_n$ and hence $x_n=g_ny_n$ for some $g_n\in G$.

If $g_n\ne g_k$ for some numbers $n,k\in\Omega$, then we can choose an open neighborhood $W\subseteq G$ of the identity in the Hausdorff topological group $G$ such that $Wg_n^{-1}\cap Wg^{-1}_k=\emptyset$. By Definition~\ref{d:singular}(iii), the sets $Wy_n$ and $Wy_k$ are open neighborhoods of $y_n$ and $y_k$, respectively. We claim that the open neighborhood $V=\{v\in X^{<\w}_\circ:v_n\in Wy_n,\;v_k\in Wy_k\}$ of $y$ does not intersect the orbit $Gx$ of $x$. In the opposite case we can find an element $g\in G$ such that $gx_n\in Wy_n$ and $gx_k\in Wy_k$. Then  $gg_ny_n=gx_n\in Wy_n$ and $gg_ky_k=gx_k\in Wy_k$. The injectivity of the maps $\alpha_{y_n}$ and $\alpha_{y_k}$ guarantees that $gg_n\in W$ and $gg_k\in W$. Then $g_n\in g^{-1}W\subseteq g_kW^{-1}W$ and finally, $g_nW^{-1}\cap g_kW^{-1}=\emptyset$, which contradicts the choice of the neighborhood $W$. This contradiction shows that $V\cap Gx=\emptyset$.

Finally, assume that $g_n=g_k$ for all $n,k\in \Omega$. Fix any number $n\in\Omega$. Since $y\notin Gx$, there exists $m\in\w\setminus \Omega$ such that $y_m=s\ne g_nx_m$. Then $g_n^{-1}x_m\ne s=y_m$ and by the Hausdorff property of $X$, we can find an open neighborhood $W_m\subseteq X$ of $y_m=s$ such that $g_n^{-1}x_m\notin\overline{W_m}$.
By the continuity of the map $\alpha_{x_m}:G\to X$, $\alpha_{x_m}:g\mapsto gx_m$, the set $F=\alpha_{x_m}^{-1}(\overline{W_m})$ is closed in $G$ and does not contain $g_n^{-1}$. Find an open neighborhood $U\subseteq G$ of the identity such that $F\cap Ug_n^{-1}=\emptyset$. We claim that the open neighborhood $V=\{v\in X^{<\w}_\circ:v_n\in Uy_n,\;v_m\in W_m\}$ of $y$ does not intersect the orbit $Gx$. In the opposite case we can find an element $g\in G$ such that $gx_n\in Uy_n$ and $gx_m\in W_m$. Then $gg_ny_n=g_nx_n\in Uy_n$ and the injectivity of the map $\alpha_{y_n}$ implies that $gg_n\in U$. Also the inclusion $gx_m\in W_m$ implies $g\in F$. Then  $gg_n\in U\cap Fg_n$, which contradicts the choice of the neighborhood $U$.
\end{proof}


\begin{claim}\label{cl:sing2} For every $n\in\w$, the closure $\overline U$ of any nonempty $G$-invariant open set $U\subseteq X_n$ contains some space $X_m$.
\end{claim}

\begin{proof} Fix any point $x\in U$. For every $m>n$, identify the ordinal $m$ with the set $\{0,\dots,m-1\}$ and consider the projection $\pi_m:X_m\to X^{m\setminus n}$, $\pi_m:x\mapsto (x_n,\dots,x_{m-1})$. By the definition of the Tychonoff product topology on $X_n$, there exists $m\ge n$ and an open neighborhood $V\subseteq X^{m\setminus n}$ of $\pi_m(x)$ such that $\pi_m^{-1}(V)\subseteq U$. Then $U=GU\supseteq G\cdot \pi_m^{-1}(V)=\pi^{-1}_m(GV)$. We claim that $\{s\}^{m\setminus n}\subset\overline{GV}$. 
Given any open set $W\subseteq X^{m\setminus n}$ that contains the singleton $\{s\}^{m\setminus n}$, find an open neighborhood $W_s\subseteq X$ of $s$ such that $W_s^{m\setminus n}\subseteq W$. Take any point $z\in X\setminus\{s\}$. By Definition~\ref{d:singular}(v), for every $i\in m\setminus n$, there exists a neighborhood $W_i\subseteq X$ of $s$ such that $\alpha_{x_i}(\alpha_z^{-1}(W_i))\subseteq W_s$. Definition~\ref{d:singular}(iv) ensures that the intersection $Gz\cap\bigcap_{i\in m\setminus n}W_i$ contains some point $w\ne s$. Then the element $g=\alpha_{z}^{-1}(w)$ is well-defined and $gx_i=\alpha_{x_i}(g)\in\alpha_{x_i}(\alpha_z^{-1}(W_i))\subseteq W_s$.
The point $(gx_i)_{i\in m\setminus n}$ belongs to the intersection $W_s^{m\setminus n}\cap \prod_{i\in m\setminus n}gV_i\subseteq W\cap gV\subseteq W\cap GV$, witnessing that $\{s\}^{m\setminus n}\subset \overline{GV}$.

Since the projection $\pi_m:X_n\to X^{m\setminus n}$ is an open $G$-equivariant map,
$$X_m=\pi_m^{-1}(\{s\}^{m\setminus n})\subseteq \pi_m^{-1}(\overline{GV})\subseteq \overline{\pi_m^{-1}(GV)}\subseteq \overline{U}.$$
\end{proof}

\begin{claim}\label{cl:sing3} For any $n\in\w$, point $x\in X\setminus X_n$, and $G$-invariant open neighborhood $U\subseteq X^{<\w}_\circ$ of $x$, there exists a $G$-invariant open neighborhood $V\subseteq U$ of $x$ such that $\overline V\subseteq U\cup X_n$.
\end{claim}

\begin{proof} By the definition of the space $X^{<\w}_\circ\ni x$, there exists an index $i\in\w$ such that $x_i\ne s$. 
 By the definition of the Tychonoff product topology on the regular topological space $X^{<\w}_\circ$, there exist $m>\max\{n,i\}$ and an open set $W\subseteq X^m$ such that $x\in \pr_m^{-1}(W)\subseteq \pr_m^{-1}(\overline{W})\subseteq U$ where $\pr_m:X^{<\w}_\circ\to X^m$, $\pr_m:y\mapsto (y_0,\dots,y_{m-1})$, is the projection onto the first $m$ coordinates. 
By Definition~\ref{d:singular}(v), for every $j\in m$ the point $x_j$ has a neighborhood $W_j\subseteq X$ such that for any neighborhood $O\subseteq X$ of $s$ there exists a neighborhood $O'$ of $s$ such that $\alpha_u(\alpha^{-1}_{x_i}(O'))\subseteq O$ for every $u\in W_j$. Replacing $W_j$ by smaller neighborhoods, we can assume that $\prod_{j\in m}W_j\subseteq W$. 

Now consider the 
``hyperplane'' $H=\{y\in X^m:y_i=x_i\}$ in $X^m$, the open set $V_i=\{(y_j)_{j\in m}\in X^m:y_i\in Gx_i\}$ in $X^m$, and the continuous map $$r:V_i\to H,\;\;r:(y_j)_{j\in m}\mapsto \big((\alpha_{x_i}^{-1}(y_i))^{-1}\cdot y_j\big)_{j\in m}.$$ The map $r$ assigns to each $y\in V_i$ the unique point of the intersection $Gy\cap H$. The continuity of the map $r$ follows from the continuity of the action $\alpha$, the openness and injectivity of the map $\alpha_{x_i}$, and the continuity of the inversion in the topological groups $G$. 
The preimage $V_m:=r^{-1}(\prod_{j\in m}W_j)$ is an open $G$-invariant set in $X^m\setminus\{s\}^m$ and the preimage $V:=\pr_m^{-1}(V_m)$ is an open $G$-invariant neighborhood of $x$ in $X^{<w}_\circ$. We claim that $\overline V\subseteq U\cup X_m\subseteq U\cup X_n$.

Observe that $V_m=r^{-1}(W_m)=GW_m$ where $W_m:=H\cap\prod_{j\in m}W_j$ is an open neighborhood of $\pr_m(x)$ in $H$ and $\overline W_{\!m}\subseteq\overline{W}$.
The continuity of the map $r$ ensures that the set $G\cdot\overline W_{\!m}=r^{-1}(\overline W_{\!m})$ is closed in $V_i$ and hence $\overline V_m\subseteq (X^m\setminus V_i)\cup G\overline W_{\!m}$. 

 We claim that no point of the set $(X^m\setminus V_i)\setminus\{s\}^m$ belongs to the closure $\overline V_m$. Fix any point $y=(y_j)_{i\in m}\in X^m\setminus V_i$ with $y\notin \{s\}^m$. By the definition of the set $V_i$, we have $y_i\notin Gx_i$. If $y_i\ne s$, then 
$U_y=\{z\in X^m:z_i\in Gy_i\}$ is an open neighborhood of $y$, which is disjoint with the set $V_i\supseteq V_m$. 

So, assume that $y_i=s$. Since $y\notin \{s\}^m$, there exists $k\in m$ such that $y_k\ne s$. By the Hausdorff property of $X$, there exists an open neighborhood $O\subseteq X$ of $s$ such that $y_k\notin\overline O$. By the choice of the set  $W_k$, there exists an open neighborhood   $O'\subseteq X$ of $s$ such that $\alpha_u(\alpha_{x_i}^{-1}(O'))\subseteq O$ for every $u\in W_k$. Now consider the open neighborhood $U_y=\{z\in X^m:y_i\in O',\;y_k\notin \overline O\}$ of $y$ in $X^m$. We claim that $U_y\cap V_m=\emptyset$. To derive a contradiction, assume that $U_y\cap V_m$ contains some point $z$. Since $z\in V_m=GW_m$, the point $z$ can be written as  $z=gh$ for some $g\in G$ and  $h\in W_m=H\cap \prod_{j\in m}W_j$. Write $h$ as $(h_j)_{j\in m}$. It follows from $h\in H$ that $h_i=x_i$ and $h_k\in W_k$. On the other hand, $z\in U_y$ implies $gx_i=gh_i\in O'$ and $gh_k\notin \overline O$. The inclusions $gx_i\in O'$ and $h_k\in W_k$ imply $g\in \alpha_{x_i}^{-1}(O')$ and $gh_k=\alpha_{h_k}(g)\in \alpha_{h_k}(\alpha_{x_i}^{-1}(O')\subseteq O$, which contradicts $gh_k\notin \overline O$.
This contradiction shows that $\overline V_m\subseteq G\overline W_m\cup\{s\}^m$.
Since the projection $\pr_m:X^{<\w}_\circ \to X^m$ is open, 
\begin{multline*}
\overline V=\pr_m^{-1}(\overline V_{\!m})\subseteq \pr_m^{-1}(\{s\}^m\cup G{\cdot}\overline W_{\!m})=X_m\cup G{\cdot}\pr_m^{-1}(\overline W_{\!m})\subseteq\\
 X_m\cup G{\cdot} \pr_m^{-1}(W)\subseteq X_m\cup G{\cdot} U= X_m\cup U.
\end{multline*} 
\end{proof}

Claims~\ref{cl:sing1}--\ref{cl:sing3} imply that the $G$-space $X^{<\w}_\circ$ satisfies the conditions of Proposition~\ref{p2}. The proof of this proposition implies that the sequence $(Y_n)_{n\in\w}$ is a superskeleton form the orbit space $X\mathsf P^\infty=X^{<\w}_\circ/G$. Since each space $Y_{n+1}$ is nowhere dense in $Y_n$, the superskeleton $(Y_n)_{n\in\w}$ is caninical. If the space $X$ is countable and first-countable, then $X\mathsf P^\infty$ is homeomorphic to $\IQ\mathsf P^\infty$ by Theorem~\ref{t:main}.
\end{proof}

\begin{remark} The skeleton $(Y_n)_{n\in\w}$ contructed in the proof of Theorem~\ref{p3} will be called {\em the canonical superskeleton} of the space $X\mathsf P^\infty$.
\end{remark}


Let $\IF$ be a topological field. Three elements $\IF^*x,\IF^*y,\IF^*z$ of the projective space $\IF\mathsf P^\infty$ are called {\em collinear} if the union $\IF^*x\cup\IF^*y\cup\IF^* z$ is contained in some $2$-dimensional vector subspace of $\IF^{<\w}$.

For two topological fileds $\IF_1,\IF_2$ a map $f:\IF_1\mathsf P^\infty\to\mathsf \IF_2\mathsf P^\infty$ is called {\em affine} if for any collinear elements $\IF_1^*x,\IF_1^*y,\IF_1^*z\in\IF_1\mathsf P^\infty$, the elements $f(\IF_1^*x),f(\IF_1^*y),f(\IF_1^*z)$ are collinear in the projective space $\IF_2^*\mathsf P^\infty$. A bijective map  $f:\IF_1\mathsf P^\infty\to\mathsf \IF_2\mathsf P^\infty$ is called an {\em affine isomorphism} if both maps $f$ and $f^{-1}$ are affine. 
If an affine isomorphism  $f:\IF_1\mathsf P^\infty\to\mathsf \IF_2\mathsf P^\infty$ is also a homeomorphism, then $f$ is called an {\em affine topological isomorphism}. The projective spaces $\IF_1\mathsf P^\infty,\IF_2\mathsf P^\infty$ are called {\em affinely isomorphic} (resp. {\em affinely homeomorphic}) if there exists an affine topological ismorphism $f:\IF_1\mathsf P^\infty\to\mathsf \IF_2\mathsf P^\infty$.

  In spite of the fact that for any countable subfields $\IF_1,\IF_2\subseteq\mathbb C$, the infinite projective spaces $\IF_1\mathsf P^\infty$ and $\IF_2\mathsf P^\infty$ are homeomorphic (by Theorem~\ref{p3}),  we have the following rigidity result for affine isomorphisms between infinite projective spaces.

\begin{proposition} Two (topological) fields $\IF_1,\IF_2$ are (topologically) isomorphic if and only if their infinite projective spaces $\IF_1\mathsf P^\infty,\;\IF_2\mathsf P^\infty$ are affinely isomorphic (affinely homeomorphic).
\end{proposition}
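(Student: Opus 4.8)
The plan is to prove the two implications separately; the forward one is a routine transport of structure, and the converse rests on the Fundamental Theorem of Projective Geometry together with a normalization argument that handles the topological refinement. \emph{From field isomorphisms to affine isomorphisms.} Suppose $h\colon\IF_1\to\IF_2$ is a (topological) isomorphism of fields. Applying $h$ coordinatewise produces an additive bijection $H\colon\IF_1^{<\w}\to\IF_2^{<\w}$ that is $h$-semilinear, i.e. $H(\lambda x)=h(\lambda)H(x)$; consequently $H$ carries every $2$-dimensional $\IF_1$-subspace onto a $2$-dimensional $\IF_2$-subspace, maps $\IF_1^{<\w}\setminus\{0\}^\w$ onto $\IF_2^{<\w}\setminus\{0\}^\w$, and --- being the restriction of the coordinatewise homeomorphism $h^\w\colon\IF_1^\w\to\IF_2^\w$ --- is a homeomorphism whenever $h$ is. Hence $H$ descends to a bijection $\bar H\colon\IF_1\mathsf P^\infty\to\IF_2\mathsf P^\infty$. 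Since $H$ and its inverse (which is induced by $h^{-1}$) preserve $2$-dimensional subspaces, both $\bar H$ and $\bar H^{-1}$ preserve collinearity, so $\bar H$ is affine; and since $\bar H\circ q_1=q_2\circ H$ for the quotient projections $q_1,q_2$, both $\bar H$ and $\bar H^{-1}$ are continuous. Thus $\bar H$ is an affine isomorphism, and an affine topological isomorphism when $h$ is a homeomorphism.

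\emph{From affine isomorphisms to field isomorphisms.} Let $f\colon\IF_1\mathsf P^\infty\to\IF_2\mathsf P^\infty$ be an affine isomorphism. Each $\IF_i^{<\w}$ has linear dimension $\aleph_0\ge 3$, so $\IF_1\mathsf P^\infty$ and $\IF_2\mathsf P^\infty$ are projective spaces of dimension $\ge 2$; since $f$ and $f^{-1}$ both preserve collinearity and, in such a space, the line through two distinct points is exactly the set of points collinear with them, $f$ maps lines bijectively onto lines, i.e. $f$ is a collineation. By the Fundamental Theorem of Projective Geometry, $f$ is induced by a semilinear isomorphism $T\colon\IF_1^{<\w}\to\IF_2^{<\w}$: an additive bijection for which there is a field isomorphism $\sigma\colon\IF_1\to\IF_2$ with $T(\lambda x)=\sigma(\lambda)T(x)$ for all $\lambda\in\IF_1$ and $x\in\IF_1^{<\w}$. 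In particular $\IF_1$ and $\IF_2$ are isomorphic fields, which proves the proposition in the non-topological case.

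\emph{The topological refinement.} Assume in addition that $f$ is a homeomorphism; we must arrange that $\sigma$ be one as well. The map $T$ supplied by the Fundamental Theorem need not be continuous, so I first normalize $f$. For $N\in\w$ and $\phi_0\in\mathrm{GL}_N(\IF_2)$, the linear automorphism $\phi=\phi_0\oplus\mathrm{id}$ of $\IF_2^{<\w}=\IF_2^N\oplus\langle e_N,e_{N+1},\dots\rangle$ is a homeomorphism and induces an affine topological isomorphism $\bar\phi$ of $\IF_2\mathsf P^\infty$. Choosing $N,\phi_0$ (and then a second such block map supported on the first two coordinates, using that $\mathrm{PGL}_2(\IF_2)$ acts simply transitively on projective frames of $\IF_2\mathsf P^1$) appropriately and replacing $f$ by $\bar\phi\circ f$, I may assume successively that $T$ carries $\langle e_0,e_1\rangle$ onto $\langle e_0,e_1\rangle$ and then that $T$ fixes the projective frame, $T(\IF_1 e_0)=\IF_2 e_0$, $T(\IF_1 e_1)=\IF_2 e_1$, $T(\IF_1(e_0+e_1))=\IF_2(e_0+e_1)$; these replacements do not change $\sigma$, since the extra factors are $\mathrm{id}$-semilinear. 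Semilinearity then forces $Te_0=\mu e_0$ and $Te_1=\mu e_1$ for a common $\mu\in\IF_2^*$, whence $T(e_0+\lambda e_1)=\mu(e_0+\sigma(\lambda)e_1)\in\IF_2(e_0+\sigma(\lambda)e_1)$ for every $\lambda\in\IF_1$. Finally, for $i\in\{1,2\}$ the restriction of the quotient projection $q_i$ to the affine chart $\{x\in\IF_i^{<\w}:x_0=1\}$ is a homeomorphism onto the open set $\{[x]\in\IF_i\mathsf P^\infty:x_0\ne 0\}$ (with continuous inverse $[x]\mapsto x_0^{-1}x$), so $\iota_i\colon\IF_i\to\IF_i\mathsf P^\infty$, $\iota_i(\lambda)=\IF_i(e_0+\lambda e_1)$, is a topological embedding. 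Since $f(\iota_1(\lambda))=\IF_2(e_0+\sigma(\lambda)e_1)=\iota_2(\sigma(\lambda))$, we obtain $\sigma=\iota_2^{-1}\circ f\circ\iota_1$, a composition of homeomorphisms; hence $\sigma$, and likewise $\sigma^{-1}$, is continuous, so $\sigma\colon\IF_1\to\IF_2$ is a topological field isomorphism, completing the proof.

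The step I expect to be the main obstacle is this last, topological, one: the Fundamental Theorem yields only an abstract semilinear bijection $T$ with no control over continuity, and the whole purpose of the normalization is to replace $T$ by one whose action on a single projective line is literally coordinatewise, so that the induced field isomorphism $\sigma$ is exhibited as a restriction of the given homeomorphism $f$. A minor point that must be verified along the way --- and which is already used above --- is that our definition of ``affine'' (preservation of collinear triples by both $f$ and $f^{-1}$) coincides with the notion of collineation appearing in the Fundamental Theorem.
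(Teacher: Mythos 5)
Your proof is correct and follows essentially the same strategy as the paper: the forward direction is the identical coordinatewise transport of structure, and the converse extracts the field isomorphism $\sigma$ from a classical coordinatization theorem of projective geometry and then exhibits $\sigma$ as a composition of the given homeomorphism with topological embeddings of the fields as (affine charts of) projective lines, which is exactly the paper's continuity argument $\sigma=f_2^{-1}\circ h\circ f_1$. The only variation is that the paper first restricts the affine isomorphism to the projective plane spanned by $e_1,e_2,e_3$ and invokes Hilbert's theorem for planes with a suitably normalized image frame $e_1',e_2',e_3'$, whereas you apply the Fundamental Theorem of Projective Geometry to the whole infinite-dimensional space and normalize afterwards by composing with linear homeomorphisms of $\IF_2^{<\w}$; both normalizations play the same role and both arguments are sound.
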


\begin{proof} If $\sigma:\IF_1\to\IF_2$ is a (topological) isomorphism of the (topological) fields $\IF_1,\IF_2$, then the map $f$ induces the affine (topological) isomorphism  $$\tilde \sigma:\IF_1\mathsf P^\infty\to\IF_2\mathsf P^\infty,\;\;\tilde \sigma:\IF_1^*\cdot (x_n)_{n\in\w}\mapsto \IF_2^*\cdot (\sigma(x_n))_{n\in\w},$$ of the projective spaces $\IF_1\mathsf P^\infty$ and $\IF_2\mathsf P^\infty$. This proves the ``only if'' part of the proposition.
\smallskip

To prove the ``if'' part, assume that $g:\IF_1\mathsf P^\infty\to\IF_2\mathsf P^\infty$ is an affine (topological) isomorphism between the projective spaces $\IF_1\mathsf P^\infty$ and $\IF_2\mathsf P^\infty$. Identify the space $\IF_1^3$ with the $3$-dimensional vector subspace $\{(x_n)_{n\in\w}\in\IF_1^{<\w}:\forall n\ge 3\;(x_n=0)\}$ of the topological vector space $\IF_1^{<\w}$. Consider the vectors $e_1=(1,0,0)$, $e_2=(0,1,0)$ and $e_3=(0,0,1)$ in $\IF_1^3$, and observe that the elements $\IF_1^*e_1,\IF_1^*e_2,\IF_1^*e_3$ are not collinear in the projective space $\IF_1\mathsf P^\infty$. Then their images $g(\IF_1^*e_1),g(\IF_1^*e_2),g(\IF_1^*e_3)$ are not collinear in the projective space $\IF_2\mathsf P^\infty$ and hence the union $g(\IF_1^*e_1)\cup g(\IF_1^*e_2) \cup g(\IF_1^*e_3)$ is contained in a unique 3-dimensional vector subspace $L$ of the topological vector space $\IF_2^{<\w}$. We can choose a basis $e_1'\in g(\IF^*_1e_1)$, $e_2'=g(\IF^*_1e_2)$, $e_3'\in g(\IF^*_1e_3)$ for the space $L$ such that $\IF_2^*(e'_1+e'_2+e'_3)=g(\IF^*_1(e_1+e_2+e_3))$.
Then the affine (topological) isomorphism $g$ induces an affine (topological) isomorphism $h=g{\restriction}\IF_1\mathsf P^2$ of the projective planes $\IF_1 P^2=(\IF^3_1\setminus\{0\}^3)/\IF_1^*$ and  $\IF_2 P^2=(\IF^3_2\setminus\{0\}^3)/\IF_2^*$ such that $h(\IF_1^*e_i)=\IF_2^*e_i '$ for $i\in\{1,2,3\}$ and $h(\IF_1^*(e_1+e_2+e_3))=\IF^*_2(e_1'+e_2'+e_3')$. By a classical result of Hilbert (cf. Proposition 3.11 in \cite{Hart} or  Lemma 2.8.2 in \cite{Kryftis}), there exists an isomorphism $\sigma:\IF_1\to\IF_2$ of the fields $\IF_1,\IF_2$ such that for any $(x_1,x_2,x_3)\in\IF_1^3$ we have  $$h(\IF_1^*(x_1e_1+x_2e_2+x_3e_3))=\IF_2^*(\sigma(x_1)e_1'+\sigma(x_2)e_2'+
\sigma(x_3)e_3').$$ Therefore, the fields $\IF_1,\IF_2$ are isomorphic.

If the affine isomorphism $g$ is a homeomorphism, then so is the map $h$. In this case we shall prove that the field isomorphism $\sigma$ is a homeomorphism. For this observe that for every $i\in\{1,2\}$, the map $f_i:\IF_i\to \IF_i\mathsf P^\infty$, $f:x\mapsto \IF_i^*(e_1+xe_2)$ is a topological embedding. Since $\sigma=f_2^{-1}\circ h\circ f_1$ and $\sigma^{-1}=f_1^{-1}\circ h^{-1}\circ f_2$, then maps $\sigma$, $\sigma^{-1}$ are continuous and $\sigma:\IF_1\to\IF_2$ is a topological isomorphism of the topological fields $\IF_1,\IF_2$.
\end{proof}



By Example~\ref{ex1}(2,5), the spaces $\mathbb C$, $\mathbb R$, $\bar\IR_+$ endowed with suitable group actions are singular $G$-spaces. By Theorem~\ref{p3}, the infinite projective spaces $\mathbb C\mathsf P^\infty$, $\IR\mathsf P^\infty$, $\overline\IR_+\mathsf P^\infty$ possess (canonical) superskeleta. It can be shown that each of these spaces has a countable base of the topology  consisting of sets, homeomorphic to the space $\IR^{<\w}$, so is a (non-metrizable) $\IR^{<\w}$-manifold. It can be shown that the $\IR^{<\w}$-manifolds  $\mathbb C\mathsf P^\infty$, $\IR\mathsf P^\infty$, $\overline\IR_+\mathsf P^\infty$  are pairwise non-homeomorphic (because of different homotopical properties of complements $Y_0\setminus Y_n$ of their canonical skeleta). The distinguishing topological property of the space $\overline\IR_+\mathsf P^\infty$ is possessing a superskeleton $(Y_n)_{n\in\w}$ such that for every $n<m$ in $\w$ the complement $Y_n\setminus Y_m$ is contractible.

This observation and the topological characterization of the space $\IQ\mathsf P^\infty$ suggests the following topological characterization of the space $\overline\IR_+\mathsf P^\infty$.

\begin{conjecture} A Hausdorff topological space $X$ is homeomorphic to $\overline\IR_+\mathsf P^\infty$ if and only if $X$ possesses a superskeleton $(X_n)_{n\in\w}$ such that  for every $n<m$ in $\w$ the set $X_{m}$ is a $Z$-set in $X_n$ and the space $X_n\setminus X_m$ is homeomorphic to $\IR^{<\w}$.
\end{conjecture}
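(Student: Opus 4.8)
Since the statement is a conjecture, what follows is a proof program rather than a complete argument. The plan is to imitate the proof of Theorem~\ref{t:main} from Section~\ref{s:main}, replacing the class of countable metrizable spaces --- in which the complements $X\setminus X_n$ live in the $\IQ\mathsf P^\infty$ case --- by the class of second-countable spaces covered by open copies of $\sigma:=\IR^{<\w}$ (``$\IR^{<\w}$-manifolds''), and replacing the combinatorial back-and-forth for countable spaces by the absorbing-set and $Z$-set machinery of infinite-dimensional topology for $\sigma$-manifolds. Thus the ``only if'' part amounts to exhibiting on $\overline\IR_+\mathsf P^\infty$ a superskeleton with the two extra properties, and the ``if'' part amounts to recovering $\overline\IR_+\mathsf P^\infty$ from an abstract superskeleton with these properties.

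For the ``only if'' part I would start from the canonical superskeleton $(Y_n)_{n\in\w}$ of $Y:=\overline\IR_+\mathsf P^\infty$ produced by Theorem~\ref{p3}; here $s=0$, $Y_n=q\big(\{x\in(\overline\IR_+)^{<\w}_\circ:x_0=\dots=x_{n-1}=0\}\big)$, and each $Y_m\subseteq Y_n$ ($n<m$) is closed, hence is closed in $Y_n$. On $Y_n\setminus Y_m$ the finite maximum $M(x)=\max_{n\le i<m}x_i$ is a continuous positive function, so normalising by $M$ gives a homeomorphism of $Y_n\setminus Y_m$ onto $\{x\in(\overline\IR_+)^{<\w}:x_i=0\ (i<n),\ \max_{n\le i<m}x_i=1\}$, which splits as $\{y\in[0,1]^{m-n}:\max_i y_i=1\}\times\overline\IR_+^{<\w}$ with the product topology. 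Since $\{y\in[0,1]^k:\max_i y_i=1\}\cong[0,1]^{k-1}$ and, by the theory of absorbing sets, $\overline\IR_+^{<\w}\cong\IR^{<\w}$ and $[0,1]^{k-1}\times\IR^{<\w}\cong\IR^{<\w}$, we obtain $Y_n\setminus Y_m\cong\IR^{<\w}$, i.e. property~(i). For the $Z$-set property, in each chart $\{[x]\in Y_n:x_j\ne 0\}$ with $j\ge m$, normalising by the continuous coordinate $x_j$ realises $Y_m$ as the subset ``the first $m$ coordinates vanish'' inside $\overline\IR_+^{<\w}\cong\IR^{<\w}$; this subset is locally pushable off itself in the vanishing coordinate directions, and patching the local pushes (using that $Y_n$ is Lindel\"of and locally an AR) shows that $Y_m$ is a $Z$-set in $Y_n$.

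For the ``if'' part, the hypotheses already force $X$ to be second-countable: taking $n=0$, each $X\setminus X_m\cong\IR^{<\w}$ ($m\ge 1$) is second-countable and open in $X$, and $X=\bigcup_{m\ge 1}(X\setminus X_m)$ since $\bigcap_m X_m=\emptyset$, so amalgamating countable bases of the pieces yields a countable base of $X$. Hence both $X$ and $Y:=\overline\IR_+\mathsf P^\infty$ are second-countable, non-metrizable ``$\IR^{<\w}$-manifolds'' carrying superskeleta with property~(i) and the $Z$-set property. I would then build a homeomorphism $h\colon X\to Y$ as the limit of homeomorphisms $h_n\colon X\setminus X_{n+1}\to Y\setminus Y_{n+1}$ chosen so that $h_{n+1}$ agrees with $h_n$ outside a decreasing sequence of open sets $W_n\supseteq X_{n+1}$ with $\bigcap_n W_n=\emptyset$; then $h$ is a well-defined bijection, and $h,h^{-1}$ are continuous because the ``non-metrizable'' parts of the two spaces sit inside the $X_{n+1}$'s and the $Y_{n+1}$'s. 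The recursive step passes from $h_{n-1}\colon X\setminus X_n\to Y\setminus Y_n$ to $h_n$ using that $X_n\setminus X_{n+1}$ is a closed $Z$-set in $X\setminus X_{n+1}$ (because $X_n$ is a $Z$-set in $X$) and carries a collar there, and similarly for $Y$: one extends $h_{n-1}$ over a collar of $X_n\setminus X_{n+1}$, absorbing the mismatch (the fact that $h_{n-1}$ need not carry the level sets of the $X$-collar onto those of the $Y$-collar) by a $Z$-set unknotting isotopy inside the $\IR^{<\w}$-manifold $Y\setminus Y_n$, kept small and supported near the end $Y_n$.

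The main obstacle, in my view, is to run this recursive step in the genuinely \emph{non-metrizable} (superconnected) category: ``smallness'' has to be measured by open covers refining the skeleton filtration rather than by a metric, so one needs a version of $Z$-set unknotting and collar absorption for $\IR^{<\w}$-manifolds that is controlled by prescribed open covers and whose control degrades in a summable fashion across the infinitely many stages, so that $h=\lim_n h_n$ is a genuine homeomorphism. This is exactly the role played in Section~\ref{s:main} by the combinatorial back-and-forth and the characterisation of $\IQ$; supplying the analogous infinite-dimensional-topology input for $\sigma$-manifolds, compatibly with the non-regular ambient topology, is the crux --- and is presumably why the statement is here left only as a conjecture.
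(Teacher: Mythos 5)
The statement you are addressing is stated in the paper only as a \emph{conjecture}: the authors offer no proof of it, so there is no argument of theirs to compare yours against, and no complete proof should be expected here. Your submission is explicitly a program rather than a proof, and as such it is a reasonable one. The ``only if'' half is essentially sound modulo standard absorbing-set theory: the canonical superskeleton $(Y_n)_{n\in\w}$ of $\overline\IR_+\mathsf P^\infty$ from Theorem~\ref{p3} does admit the section $\{x: x_i=0\ (i<n),\ \max_{n\le i<m}x_i=1\}$ over $Y_n\setminus Y_m$ (the openness of the quotient map makes the normalization a homeomorphism onto its image), the splitting into $\{y\in[0,1]^{m-n}:\max_iy_i=1\}\times\overline\IR_+^{<\w}$ is correct, and the identifications with $\IR^{<\w}$ follow from the characterization theorems cited in \cite{Mog}, \cite{CDM}. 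Even here, though, your $Z$-set verification (``locally pushable off itself\ldots patching the local pushes'') is only gestured at; since $Y_n$ is not metrizable, the compact-open density required by the paper's definition of $Z$-set needs an actual argument, not just local charts plus Lindel\"ofness.

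The genuine gap is exactly where you locate it: the ``if'' direction. Your inductive scheme --- extending $h_{n-1}:X\setminus X_n\to Y\setminus Y_n$ to $h_n$ via collars and $Z$-set unknotting inside $Y\setminus Y_n$, with control measured by open covers refining the skeleton filtration --- presupposes a cover-controlled $Z$-set unknotting and collaring theory for $\IR^{<\w}$-manifolds sitting openly inside a non-regular, superconnected ambient space, together with a convergence criterion guaranteeing that the limit of the $h_n$ is a homeomorphism of the whole (non-metrizable) spaces and not merely of each complement $X\setminus X_n$. None of this machinery exists in the cited literature, and you do not construct it; you also do not address whether the two extra hypotheses pin down the germ of the topology \emph{at} the sets $X_n$ (the analogue of conditions (2f)--(2i) in the proof of Theorem~\ref{t:key}, which is where all the work in the $\IQ\mathsf P^\infty$ case is concentrated). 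So the proposal correctly reproduces the shape of a would-be proof and honestly isolates the missing input, but it does not close the conjecture --- consistent with the paper leaving it open.
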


A closed subset $A$ of a topological space $X$ is called a {\em $Z$-set} in $X$ if the set $C(\mathbb I^\w,X\setminus A)$ is dense in the space $C(\mathbb I^\w,X)$ of continuous functions from the Hilbert cube $\mathbb I^\w=[0,1]^\w$ to $X$, endowed with the compact-open topology. For more information on Infinite-Dimensional Topology, see the monographs \cite{BP}, \cite{vM}, \cite{BRZ}, \cite{Sakai}. For the topological characterization of the space $\IR^{<\w}$, see \cite{Mog}, \cite{CDM}, \cite[\S1.6]{BRZ}, \cite[\S4.3]{Sakai}.

It can be shown that the spaces $\IR\mathsf P^\infty$,  $\IC\mathsf P^\infty$,  $\overline \IR_+\mathsf P^\infty$ contain dense subspaces, homeomorphic to $\IQ\mathsf P^\infty$.

\begin{problem} Does the Golomb (or Kirch) space contain a subspace homeomorphic to $\IQ\mathsf P^\infty$?
\end{problem}

\section{Some properties of skeleta in topological spaces}

In this section we establish some properties of various skeleta in topological spaces. First we fix some standard notations.

For a subset $A$ of a topological space $X$ by $\bar A$ and $\partial A$ we denote the closure and boundary of $A$ in $X$. By $\w$ we denote the smallest infinite ordinal, and by $\IN$ the set $\w\setminus\{0\}$ of positive integer numbers. Ordinals are identified with the sets of smaller ordinals. So, $n=\{0,\dots,n-1\}$ for any natural number $n\in\w$.

\begin{lemma}\label{l:cr} 
A (Hausdorff second-countable) topological space  is coregular if (and only if) it has a coregular skeleton.
\end{lemma}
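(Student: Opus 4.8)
The plan is to prove both implications of Lemma~\ref{l:cr}. The ``only if'' direction (for Hausdorff second-countable $X$) is essentially the observation already made in the introduction: fix a countable base $\{U_n\}_{n\in\w}$ of the topology, discard any $U_n=\emptyset$, and set $X_n=\overline U_{\!0}\cap\dots\cap\overline U_{\!n}$. This is a decreasing sequence of closed sets with $X_0=\overline U_{\!0}$; to arrange $X_0=X$ we may assume $U_0=X$ is in the base, or simply prepend $X$ to the sequence. Coregularity gives directly that each $X\setminus X_n$ is regular, which is property (ii) of a coregular skeleton. For property (i), given a nonempty open $U\subseteq X$, pick a basic $U_n\subseteq U$; then $\overline U\supseteq\overline U_{\!n}\supseteq X_n$. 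It remains to check $\bigcap_{n\in\w}X_n=\emptyset$: if some point $x$ lay in every $\overline U_{\!n}$, then since $X$ is Hausdorff (hence $T_1$) and has more than one point\,---\,or is a single point, a degenerate case to dispose of separately\,---\,we can separate $x$ from another point $y$ by disjoint open sets, one of which is a basic $U_n$ not containing $x$ in its closure (using that a Hausdorff second-countable space is regular on... no\,---\,rather, use that in a coregular, hence in particular Hausdorff, space one can shrink: actually the cleanest route is to note $\{x\}$ is not open-dense, so some basic $U_n$ has $x\notin\overline U_{\!n}$). I would phrase the vanishing verification carefully using the Hausdorff property: for $x\ne y$ choose disjoint open $V\ni x$, $W\ni y$; some basic $U_n\subseteq W$, so $x\notin\overline{U_n}\supseteq$\ldots wait, need $x\notin\overline{U_n}$, which holds since $V$ is an open neighborhood of $x$ disjoint from $U_n\subseteq W$.

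The ``if'' direction is the substantive one: assume $X$ has a coregular skeleton $(X_n)_{n\in\w}$ and show $X$ is coregular, i.e. Hausdorff and for all nonempty open $U_1,\dots,U_k$ the set $X\setminus(\overline U_{\!1}\cap\dots\cap\overline U_{\!k})$ is regular. First, Hausdorffness: given $x\ne y$, since $\bigcap_n X_n=\emptyset$ there is $n$ with (say) $x\notin X_n$; if also $y\notin X_n$ then $x,y$ lie in the regular\,---\,hence Hausdorff\,---\,open subspace $X\setminus X_n$ and can be separated there by sets open in $X\setminus X_n$, which are open in $X$; if $y\in X_n$, choose $m\ge n$ with $y\notin X_m$ (possible since $\bigcap X_m=\emptyset$), then both $x,y\in X\setminus X_m$ and we argue as before. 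Next, the regularity of $X\setminus(\overline U_{\!1}\cap\dots\cap\overline U_{\!k})$: by property (i) each $\overline U_{\!i}$ contains some $X_{n_i}$, so with $N=\max_i n_i$ we get $X_N\subseteq\overline U_{\!1}\cap\dots\cap\overline U_{\!k}$, hence $X\setminus(\overline U_{\!1}\cap\dots\cap\overline U_{\!k})\subseteq X\setminus X_N$. Thus it suffices to show that an open subspace of a regular space is regular (standard), and that $X\setminus X_N$ is regular\,---\,which is exactly property (ii). So the whole ``if'' direction reduces to two facts: property (i) lets us trap the bad intersection below some $X_N$, and regularity is hereditary to subspaces (in particular open ones).

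The one point demanding a little care\,---\,and where I expect the main (though modest) obstacle to sit\,---\,is handling the empty intersection / degenerate cases and, in the ``only if'' direction, making sure $X_0=X$; one also must not forget that ``$X\setminus(\overline U_{\!1}\cap\dots\cap\overline U_{\!k})$ regular'' in the definition of coregular is required for \emph{all} finite families, including $k=0$ or the case where the intersection is empty, where the statement degenerates to ``$X$ itself is regular iff \ldots''\,---\,but in that situation property (i) forces $X_0=X\subseteq\overline U$ for every nonempty open $U$, which is consistent, and one simply notes $X\setminus\emptyset=X=X\setminus X_0$ need not be regular, so no contradiction arises (the definition of coregular only asks regularity of the complement, and when the intersection is empty the complement is all of $X$, which is regular iff $X$ is regular\,---\,but then $X$ would be regular and there is nothing to prove). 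Actually the honest reading is that the intersection of closures of nonempty opens is taken over $n\ge 1$ sets and one checks the claim as stated; I would simply remark that if $\overline U_{\!1}\cap\dots\cap\overline U_{\!k}=\emptyset$ then by property (i) applied to $U_1$ we'd have $X_{n_1}\subseteq\emptyset$, contradicting $X_{n_1}\ne\emptyset$ (each $X_n\ne\emptyset$ since it contains some $X_m$ which\ldots)\,---\,rather, nonemptiness of the $X_n$ follows because $\overline{X}=X\supseteq X_n$ via (i) with $U=X$, giving $X_n\ne\emptyset$ automatically once we know $X\ne\emptyset$; so the bad intersection is always nonempty and contained in $X\setminus X_N$ as above, completing the argument.
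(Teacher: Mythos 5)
Your proof is correct and follows essentially the same route as the paper: the same construction $X_n=\overline U_{\!1}\cap\dots\cap\overline U_{\!n}$ from a countable base for the ``only if'' direction (with the regular/finite/singleton case set aside), and for the ``if'' direction the same two steps of deducing Hausdorffness from regularity of some $X\setminus X_m$ containing both points, then trapping $\overline U_{\!1}\cap\dots\cap\overline U_{\!k}\supseteq X_N$ and invoking heredity of regularity. One side remark in your last paragraph is wrong --- property (i) applied to $U=X$ gives only $X_n\subseteq X$, not $X_n\ne\emptyset$ --- but this is immaterial, since a coregular skeleton is not required to have nonempty terms and the argument works verbatim when some $X_N=\emptyset$ (then $X=X\setminus X_N$ is regular).
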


\begin{proof} To prove the ``if'' part, assume that a topological space $X$ has a coregular skeleton $(X_n)_{n\in\w}$. First we show that the space $X$ is Hausdorff. Fix any distinct points $x,y\in X$. By Definition~\ref{d:skeleton}, $(X_n)_{n\in\w}$ is a vanishing sequence of closed sets in $X$. Consequently, $\bigcap_{n\in\w}X_n$ and we can find a number $m\in\w$ such that $x,y\in X\setminus X_m$. Since $(X_n)_{n\in\w}$ is a coregular skeleton, the space $X\setminus X_m$ is regular and hence Hausdorff. Then the points $x,y$ have disjoint open neighborhoods $O_x,O_y$ in the space $X\setminus X_m$. Since $X\setminus X_m$ is open in $X$, the sets $O_x,O_y$ remain open in $X$, witnessing that $X$ is Hausdorff.

Now fix any non-empty open sets $U_1,\dots,U_k\subseteq X$. By Definition~\ref{d:skeleton}, for every $i\in\{1,\dots,k\}$ there exists a number $n_i\in\w$ such that $X_{n_i}\subseteq\overline U_{\!i}$. Then for the number $n=\max_{i\le k}n_i$ we have $$X_n\subseteq\bigcap_{i=1}^nX_{n_i}\subseteq\bigcap_{i=1}^k\overline U_{\!i}.$$
Since the skeleton $(X_i)_{i\in\w}$ is coregular, the space $X\setminus X_n$ is regular and so is its subspace $X\setminus(\overline U_{\!1}\cap\dots\cap\overline U_{\!k})$. This completes the proof of the coregularity of $X$.
\smallskip

To prove the ``only if'' part, assume that the space $X$ Hausdorff, second-countable, and coregular. If $X$ is regular, then put $X_0=X$, $X_n=\emptyset$ for all $n\in\IN$, and observe that $(X_n)_{n\in\w}$ is a coregular skeleton for $X$. So, we assume that $X$ is not regular and hence infinite.

Fix a countable base $\{U_n\}_{n\in\IN}$ of the topology of $X$ such that $U_n\ne\emptyset$ for all $n\in\IN$. Let $X_0=X$ and $X_n=\overline U_{\!1}\cap\cdots\cap\overline U_{\!n}$ for every $n\in\IN$. 

To show that $\bigcap_{n\in\w}X_n=\emptyset$, fix any point $x\in X$. Since the space $X$ is infinite and Hausdorff, there exists $n\in\w$ such that $x\notin\overline U_{\!n}$ and hence $x\notin X_n$. This shows that the sequence $(X_n)_{n\in\w}$ is vanishing. By the coregularity of $X$, for every $n\in\w$ the space $X\setminus X_n$ is regular. Also observe that every nonempty open set $U\subseteq X$ contains some set $U_n$ and then $X_n\subseteq \overline U_{\!n}\subseteq \overline U$. Therefore, the vanishing sequence $(X_n)_{n\in\w}$ is a coregular skeleton for the space $X$.
\end{proof}



\begin{lemma}\label{l:sc} A (Hausdorff infinite second-countable) topological space  is superconnected if (and only if) it possesses a superconnecting skeleton.
\end{lemma}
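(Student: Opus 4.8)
The plan is to follow the template of Lemma~\ref{l:cr}, with the regularity conditions replaced by the (simpler) nonemptiness conditions, so that the argument reduces to short bookkeeping with closures of basic open sets.

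For the ``if'' part I would assume $X$ carries a superconnecting skeleton $(X_n)_{n\in\w}$ and verify directly that the closures of finitely many nonempty open sets meet. Given nonempty open sets $U_1,\dots,U_k\subseteq X$, the defining property of the skeleton supplies indices $n_1,\dots,n_k$ with $\emptyset\ne X_{n_i}\subseteq\overline{U_i}$; setting $n=\max_{i\le k}n_i$ and using that the skeleton is decreasing gives $X_n\subseteq\bigcap_{i\le k}\overline{U_i}$, while $X_n\ne\emptyset$ because the maximum is attained, say at $j$, so that $X_n=X_{n_j}\ne\emptyset$. Hence $X$ is superconnected; note that this half uses neither second-countability nor infiniteness.

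For the ``only if'' part I would assume $X$ is Hausdorff, infinite, second-countable and superconnected, fix a countable base $\{U_n\}_{n\in\IN}$ of $X$ consisting of nonempty sets, and put $X_0=X$ and $X_n=\overline{U_1}\cap\cdots\cap\overline{U_n}$ for $n\ge 1$. Then each $X_n$ is closed, the sequence is decreasing, and $X_0=X$; superconnectedness makes every $X_n$ nonempty, being an intersection of finitely many closures of nonempty open sets. For the vanishing condition $\bigcap_{n\in\w}X_n=\emptyset$, fix $x\in X$; since $X$ is Hausdorff with more than one point, pick $y\ne x$ and disjoint open sets $O_x\ni x$, $O_y\ni y$, choose a basic set $U_n\subseteq O_y$, and note that $x\notin\overline{U_n}$ (because $O_x$ misses $U_n$), hence $x\notin X_n\subseteq\overline{U_n}$. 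Finally, every nonempty open $U\subseteq X$ contains some basic $U_n$, so $\emptyset\ne X_n\subseteq\overline{U_n}\subseteq\overline{U}$, which is the last requirement for $(X_n)_{n\in\w}$ to be a superconnecting skeleton.

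The main (mild) obstacle is keeping track of the two nonemptiness claims: that $X_n$ is nonempty at the chosen maximal index in the ``if'' part --- settled by the observation that a maximum of finitely many indices is attained --- and that the total intersection is empty in the ``only if'' part --- settled by the Hausdorff separation argument above, with the hypothesis ``infinite'' entering only to guarantee $|X|\ge 2$ (the one-point case being trivial). No deeper difficulty is anticipated.
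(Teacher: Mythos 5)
Your proposal is correct and follows essentially the same route as the paper: the ``if'' direction via taking the maximum of the indices $n_i$ supplied by the skeleton, and the ``only if'' direction via $X_n=\overline{U_1}\cap\cdots\cap\overline{U_n}$ for a countable base of nonempty sets, with the Hausdorff separation argument giving $\bigcap_{n}X_n=\emptyset$. The two nonemptiness points you flag are exactly the (small) details the paper also relies on, so nothing is missing.
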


\begin{proof} To prove the ``if'' part, assume that a topological space $X$ has a superconnecting skeleton $(X_n)_{n\in\w}$. By Definition~\ref{d:skeleton}, $(X_n)_{n\in\w}$ is a vanishing sequence in $X$. To see that $X$ is superconnected, fix any non-empty open sets $U_1,\dots,U_k\subseteq X$. By Definition~\ref{d:skeleton}, for every $i\in\{1,\dots,k\}$ there exists a number $n_i\in\w$ such that $\emptyset\ne X_{n_i}\subseteq\overline U_{\!i}$. Then for the number $n=\max_{i\le k}n_i$ we have $$\emptyset \ne X_n\subseteq\bigcap_{i=1}^nX_{n_i}\subseteq\bigcap_{i=1}^k\overline U_{\!i},$$
witnessing that $\overline U_{\!1}\cap\cdots\cap\overline U_{\!k}\ne\emptyset$ and $X$ is superconnected.
\smallskip

To prove the ``only if'' part, assume that an infinite Hausdorff second-countable space $X$ is superconnected and fix a countable base $\{U_n\}_{n\in\IN}$ of the topology of $X$ such that $U_n\ne\emptyset$ for all $n\in\IN$. Let $X_0=X$.
By the superconnectedness of $X$, for every $n\in\IN$ the closed set $X_n=\overline U_{\!1}\cap\cdots\cap\overline U_{\!n}$ is not empty. 

To show that $\bigcap_{n\in\w}X_n=\emptyset$, fix any point $x\in X$. Since the space $X$ is infinite and Hausdorff, there exists $n\in\w$ such that $x\notin\overline U_{\!n}$ and hence $x\notin X_n$. This shows that the sequence $(X_n)_{n\in\w}$ is vanishing.

To see that $(X_n)_{n\in\w}$ is a superconnecting skeleton for $X$, take any nonempty open set $U\subseteq X$ and find $n\in\w$ such that $U_n\subseteq U$. Then $$\emptyset\ne X_n=\overline U_{\!1}\cap\dots\cap\overline U_{\!n}\subseteq\overline U_{\!n}\subseteq\overline U.$$
\end{proof}



\begin{lemma}\label{l:crowded}  Let $X$ be an infinite Hausdorff topological space and $(X_n)_{n\in\w}$ be its superconnecting skeleton. Then the space $X$ is crowded and for every $n\in\w$, the space $X_n$ is infinite.
\end{lemma}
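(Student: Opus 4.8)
The plan is to prove both assertions by contradiction, using only the defining property of a superconnecting skeleton together with the monotonicity of $(X_n)_{n\in\w}$, the $T_1$ separation axiom, and the vanishing condition $\bigcap_{n\in\w}X_n=\emptyset$ from Definition~\ref{d:skeleton}.

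\emph{Crowdedness.} I would suppose, towards a contradiction, that $X$ has an isolated point $x$, so that $\{x\}$ is open in $X$. Since $X$ is Hausdorff it is $T_1$, hence $\overline{\{x\}}=\{x\}$. Applying the superconnecting property to the nonempty open set $U=\{x\}$ gives some $n\in\w$ with $\emptyset\ne X_n\subseteq\overline{\{x\}}=\{x\}$, i.e. $X_n=\{x\}$. On the other hand, as $X$ is infinite there is a point $y\ne x$, and $X\setminus\{x\}$ is open (being the complement of the open singleton $\{x\}$), nonempty, and closed; applying the superconnecting property to it yields $m\in\w$ with $\emptyset\ne X_m\subseteq\overline{X\setminus\{x\}}=X\setminus\{x\}$, so $X_m\ne\emptyset$ and $x\notin X_m$. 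Now I compare $X_n$ and $X_m$ using that the sequence is decreasing: if $m\le n$ then $x\in X_n\subseteq X_m$, contradicting $x\notin X_m$; if $m>n$ then $X_m\subseteq X_n=\{x\}$ and $x\notin X_m$ force $X_m=\emptyset$, contradicting $X_m\ne\emptyset$. Either way we reach a contradiction, so $X$ has no isolated point.

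\emph{Each $X_n$ is infinite.} Suppose some $X_n$ is finite. Then $(X_m)_{m\ge n}$ is a weakly decreasing sequence of subsets of the finite set $X_n$, hence eventually constant with some value $F$; since the sequence is decreasing its intersection equals this eventual value, so $F=\bigcap_{m\in\w}X_m=\emptyset$, and thus $X_m=\emptyset$ for all sufficiently large $m$. Let $N$ be minimal with $X_N=\emptyset$; since $X_0=X$ is infinite (hence nonempty) we have $N\ge1$, and $X_{N-1}\ne\emptyset$. Fix $a\in X_{N-1}$. Because $X$ is Hausdorff with at least two points, there is $b\ne a$ admitting disjoint open neighbourhoods $V\ni a$ and $W\ni b$; then $W$ is nonempty and open, and $a\notin\overline W$ since $V$ is a neighbourhood of $a$ missing $W$. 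For $m<N$ we have $a\in X_{N-1}\subseteq X_m$ and $a\notin\overline W$, so $X_m\not\subseteq\overline W$; and for $m\ge N$ we have $X_m=\emptyset$. Hence no index $m$ satisfies $\emptyset\ne X_m\subseteq\overline W$, contradicting the fact that $(X_n)_{n\in\w}$ is a superconnecting skeleton. Therefore every $X_n$ is infinite.

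I do not anticipate any real obstacle: the whole argument is elementary set‑theoretic bookkeeping around a decreasing sequence of closed sets with empty intersection. The only step requiring a little foresight is choosing what to feed into the superconnecting property — for crowdedness, not the singleton $\{x\}$ alone but also its clopen complement; for infiniteness, a small open set $W$ deliberately chosen to avoid a point of the last nonempty term $X_{N-1}$ — after which monotonicity of the skeleton propagates the failure to all relevant indices. It is worth noting that crowdedness is not actually used in the second part, which needs only that $X$ is Hausdorff and infinite.
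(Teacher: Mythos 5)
Your proof is correct, but it is organized differently from the paper's. The paper proves the infiniteness of the sets $X_n$ \emph{first}, via a minimal counterexample: if $X_n$ is finite with $n$ minimal, it picks a point $x_i$ in each (infinite) $X_i$ for $i<n$, uses the infinite Hausdorff hypothesis to find a nonempty open $U$ with $\overline U$ disjoint from the finite set $X_n\cup\{x_i\}_{i<n}$, and then rules out both $i\ge n$ and $i<n$ for the index $i$ with $\emptyset\ne X_i\subseteq\overline U$; crowdedness then falls out in one line, since an isolated point $x$ would force some $X_n\subseteq\{x\}$ to be finite. You instead prove crowdedness independently by feeding both $\{x\}$ and its clopen complement into the superconnecting property and playing the two resulting indices against each other via monotonicity, and you prove infiniteness by noting that a finite $X_n$ forces the decreasing tail to stabilize at its intersection $\emptyset$, after which a single Hausdorff separation of the last nonempty term from some other point defeats the superconnecting property. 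Your infiniteness argument is arguably a bit cleaner, as it replaces the paper's choice of one point per level and one open set avoiding a whole finite set by a single two-point separation; the paper's ordering buys a shorter crowdedness step. One cosmetic remark: the parenthetical justification you attach to ``$X\setminus\{x\}$ is open'' actually proves it is \emph{closed} (the complement of an open set); openness of $X\setminus\{x\}$ follows from the $T_1$ property instead. Both facts are true and both are needed, so this does not affect the argument.
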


\begin{proof} First we show that for every $n\in\w$ the set $X_n$ is infinite. To derive a contradiction, assume that for some $n\in\w$ the space $X_n$ is finite. We can assume that $n$ is the smallest number with this property. Then for every $i\in n$ the space $X_i$ is infinite and hence contains some point $x_i$.  Since $X$ is infinite and Hausdorff, there exists a nonempty set $U\subseteq X$ whose closure $\overline U$ does not intersect the finite set $X_n\cup\{x_i\}_{i\in n}$. Since the skeleton $(X_k)_{k\in\w}$ is superconnecting, the closure $\overline{U}$ contains some set $X_i\ne\emptyset$. Assuming that $i\ge n$ we conclude $\emptyset\ne X_i=X_i\cap\overline U\subseteq X_n\cap\overline U=\emptyset$, which is a contradiction. So, $i<n$ and then $x_i\in X_i\subseteq\overline U$, which contradicts the choice of $U$. This contradiction witnesses that the spaces $X_n$ are infinite. 

Assuming that the space $X$ is not crowded, we can find an isolated point $x$ in $X$. Then $U=\{x\}$ is an open subspace of $X$ such that  $\overline{U}=\{x\}$ by the Hausdorff property of $X$. Since $(X_n)_{n\in\w}$ is a superconnecting skeleton of $X$, there exist $n\in\w$ such that $X_n\subseteq \overline{U}=\{x\}$ and hence $X_n$ is finite, which is a desired contradiction.
\end{proof}

\begin{lemma}\label{l:supercrowded}  Let $X$ be an infinite topological space and $(X_n)_{n\in\w}$ be its superskeleton. Then for every $n\in\w$ the space $X_n$ is crowded.
\end{lemma}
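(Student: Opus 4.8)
The plan is to deduce the statement from Lemma~\ref{l:crowded} by showing that, for each fixed $n$, a suitable tail of the superskeleton is a superconnecting skeleton for the space $X_n$ itself.

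First I would record two preliminary facts. Since a superskeleton is in particular a coregular skeleton, the ``if'' part of Lemma~\ref{l:cr} shows that $X$ is Hausdorff, and hence every subspace $X_n$ is Hausdorff. Moreover, a superskeleton is in particular an inductively superconnecting skeleton, and instantiating that property at the level $n=0$ shows that $(X_n)_{n\in\w}$ is already a superconnecting skeleton for $X$; since $X$ is infinite and Hausdorff, Lemma~\ref{l:crowded} then yields that every $X_n$ is infinite.

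Next, fix $n\in\w$ and set $Y_k:=X_{n+k}$ for $k\in\w$. I claim that $(Y_k)_{k\in\w}$ is a superconnecting skeleton for $X_n$. It is vanishing: $Y_0=X_n$, each $X_{n+k}$ is closed in $X$ and hence in $X_n$, the sets decrease, and $\bigcap_{k\in\w}Y_k=\bigcap_{m\in\w}X_m=\emptyset$. For the superconnecting property, let $U\subseteq X_n$ be nonempty and open; since $X_n$ is closed in $X$, the closure of $U$ in $X$ coincides with its closure in $X_n$. Applying the inductively superconnecting property of $(X_m)_{m\in\w}$ at the level $n$ gives $m\in\w$ with $\emptyset\ne X_m\subseteq\overline U\subseteq X_n$. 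As the skeleton is decreasing and $X_m\subseteq X_n$, either $m\ge n$, in which case $X_m=Y_{m-n}$, or $m<n$, in which case $X_n\subseteq X_m\subseteq\overline U$ forces $\emptyset\ne X_n=Y_0\subseteq\overline U$; in both cases some nonempty $Y_k$ is contained in $\overline U$. This proves the claim, and then Lemma~\ref{l:crowded}, applied to the infinite Hausdorff space $X_n$ with its superconnecting skeleton $(Y_k)_{k\in\w}$, gives that $X_n$ is crowded.

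The only point that needs a little care is the bookkeeping in the last step: verifying that the index $m$ produced by the inductively superconnecting property falls, after possibly replacing it by $n$, into the tail $\{n,n+1,\dots\}$ indexing $(Y_k)_{k\in\w}$. This is handled by the monotonicity of the skeleton together with the inclusion $\overline U\subseteq X_n$, as indicated above; the remaining verifications are routine.
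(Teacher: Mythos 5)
Your proof is correct and follows essentially the same route as the paper: both pass to the tail $(X_{n+k})_{k\in\w}$, verify it is a superconnecting skeleton for the Hausdorff space $X_n$, use Lemma~\ref{l:crowded} once to see each $X_n$ is infinite, and then apply Lemma~\ref{l:crowded} again to conclude $X_n$ is crowded. Your extra bookkeeping about the index $m$ possibly falling below $n$ is a correct elaboration of a point the paper leaves implicit.
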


\begin{proof} By Definition~\ref{d:skeleton}, for every $n\in\w$ the sequence $(X_m)_{m=n}^\infty$ is a superconnecting coregular skeleton for the space $X_n$. Then the space $X=X_0$ is coregular and hence Hausdorff. By Lemma~\ref{l:crowded}, for every $n\in\w$ the set $X_n$ is infinite. Applying Lemma~\ref{l:crowded} to the superconnecting skeleton $(X_m)_{m=n}^\infty$ for the infinite Hausdorff space $X_n$, we conclude that $X_n$ croweded.
\end{proof}

\begin{lemma}\label{l:nodense} Let $X$ be an infinite Hausdorff space and $(X_n)_{n\in\w}$ be its superconnecting or coregular skeleton. Then for some $n\in\w$ the set $X_n$ is nowhere dense in $X=X_0$.
\end{lemma}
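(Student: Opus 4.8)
The plan is to proceed by contradiction: assume that for \emph{every} $n\in\w$ the set $X_n$ is not nowhere dense in $X$. Since each $X_n$ is closed, this means $\mathrm{int}(X_n)\ne\emptyset$ for all $n\in\w$; in particular $X_n\ne\emptyset$ for every $n$.

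The first step is to unify the two cases. If $(X_n)_{n\in\w}$ is a coregular skeleton, then property~(i) of a coregular skeleton, together with the fact just observed that every $X_n$ is nonempty, says precisely that for every nonempty open $U\subseteq X$ there is $n\in\w$ with $\emptyset\ne X_n\subseteq\overline U$ --- that is, $(X_n)_{n\in\w}$ is a superconnecting skeleton. Hence in either case I may work with the single hypothesis: \emph{for every nonempty open $U\subseteq X$ there is $n\in\w$ with $\emptyset\ne X_n\subseteq\overline U$.}

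The second step produces the contradicting open set. Since $(X_n)_{n\in\w}$ is vanishing, $\bigcap_{n\in\w}X_n=\emptyset$, while $X\ne\emptyset$; hence there is $N\in\w$ with $X_N\ne X$, so that $U:=X\setminus X_N$ is nonempty and open. Applying the superconnecting property to $U$ yields $m\in\w$ with $\emptyset\ne X_m\subseteq\overline U=\overline{X\setminus X_N}$. Setting $k:=\max\{m,N\}$ and using that the tower is decreasing (so $X_k\subseteq X_m$ and $X_k\subseteq X_N$), I obtain
\[
X_k\ \subseteq\ X_m\cap X_N\ \subseteq\ \overline{X\setminus X_N}\cap X_N\ =\ \partial X_N,
\]
where the last equality uses that $X_N$ is closed. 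Now $\mathrm{int}(X_k)$ is a nonempty open subset of $X$ contained in $X_k\subseteq X_N$, so $\mathrm{int}(X_k)\subseteq\mathrm{int}(X_N)$; but $\mathrm{int}(X_N)$ is disjoint from $\partial X_N$, which contains $X_k\supseteq\mathrm{int}(X_k)$. Therefore $\mathrm{int}(X_k)=\emptyset$, contradicting the standing assumption, and the lemma follows.

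The only point needing a little care is the bookkeeping with indices: the number $m$ delivered by the superconnecting property need not be comparable to $N$, which is why I pass to $k=\max\{m,N\}$, so that $X_k$ is contained in both $X_m$ and $X_N$; the standing assumption then enters only at the very last step, where $\mathrm{int}(X_k)\ne\emptyset$ forces the contradiction. I do not expect to need the full strength of ``$X$ infinite Hausdorff'' in this argument --- only $X\ne\emptyset$ is actually used --- so the statement could be slightly strengthened, but I will leave it as stated.
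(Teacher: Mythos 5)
Your proof is correct. It is worth noting that it follows a genuinely different route from the paper's. The paper argues directly: since $X$ is infinite and Hausdorff it contains two disjoint nonempty open sets $U,V$; the skeleton condition puts some $X_n$ inside $\overline U\cap\overline V\subseteq\overline U\cap\overline{X\setminus U}=\partial U$, and the boundary of an open set is nowhere dense. You instead manufacture the relevant open set from the skeleton itself: the vanishing condition gives a proper closed $X_N$, you apply the skeleton condition to $X\setminus X_N$, and land $X_k$ (for $k=\max\{m,N\}$) in $\partial X_N$, the boundary of a closed set, which is likewise nowhere dense; the contradiction framing is only used to guarantee the sets $X_n$ are nonempty in the coregular case (one could equally note that an empty $X_n$ is already nowhere dense and argue directly). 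The trade-off: the paper's argument is shorter and exhibits the nowhere dense level at once, while yours shows that the hypotheses ``infinite'' and ``Hausdorff'' are not actually needed for this lemma --- only that $X\neq\emptyset$ together with the vanishing and closure properties of the skeleton --- so your version proves a slightly stronger statement. Both proofs share the same core mechanism of trapping a tail of the tower in a boundary.
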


\begin{proof}  Being infinite and Hausdorff, the space $X$ contains two disjoint  nonempty open sets $U,V$. By Definition~\ref{d:skeleton}, the closures $\overline U$, $\overline V$ contain some set $X_n$. Then  the set $X_n\subseteq\overline U\cap\overline V\subseteq \overline U\cap\overline{X\setminus U}=\partial U$ is nowhere dense in $X$.
 \end{proof}
 
 \begin{lemma}\label{l:canonical} If $(X_n)_{n\in\w}$ is a superskeleton for an infinite topological space $X$, then for some increasing number sequence $(n_k)_{k\in\w}$ the sequence $(X_{n_k})_{k\in\w}$ is a canonical superskeleton for $X$.
 \end{lemma}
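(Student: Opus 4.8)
The plan is to extract the subsequence recursively: at each step Lemma~\ref{l:nodense} supplies a later term of the skeleton that is nowhere dense in the current one, and afterwards one checks that a cofinal subsequence of a superskeleton is again a superskeleton (and is canonical by construction).

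First I would record the preliminaries. Since the superskeleton $(X_n)_{n\in\w}$ is in particular a coregular skeleton, $X$ is coregular, hence Hausdorff, so every subspace $X_n$ is Hausdorff. Since $(X_n)_{n\in\w}$ is also an inductively superconnecting skeleton, specializing its defining property at the index $0$ (where $X_0=X$) shows that $(X_n)_{n\in\w}$ is a superconnecting skeleton for $X$; hence by Lemma~\ref{l:crowded} every $X_n$ is infinite, in particular nonempty. More generally, for each $n\in\w$ the tail $(X_m)_{m\ge n}$ is a superconnecting skeleton for the infinite Hausdorff space $X_n$: it is vanishing with first term $X_n$, and its defining property is precisely the inductively superconnecting property of $(X_n)_{n\in\w}$ at the index $n$.

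Next I would build $(n_k)_{k\in\w}$ by recursion. Put $n_0=0$. Given $n_k$, apply Lemma~\ref{l:nodense} to the infinite Hausdorff space $X_{n_k}$ equipped with its superconnecting skeleton $(X_m)_{m\ge n_k}$ to obtain an index $n_{k+1}$ for which $X_{n_{k+1}}$ is nowhere dense in $X_{n_k}$. Since an infinite space is never nowhere dense in itself, necessarily $n_{k+1}>n_k$, so $(n_k)_{k\in\w}$ is strictly increasing.

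It remains to verify that $(X_{n_k})_{k\in\w}$ is a canonical superskeleton for $X$. It is a decreasing sequence of closed sets with $X_{n_0}=X$, and $\bigcap_{k}X_{n_k}=\bigcap_{n}X_n=\emptyset$ because $(n_k)$ is cofinal in $\w$; so it is vanishing. Regularity of $X\setminus X_{n_k}$ is a special case of the corresponding property of $(X_n)_{n\in\w}$. Given a nonempty open $U\subseteq X$, choose $n$ with $X_n\subseteq\overline U$ and then $k$ with $n_k\ge n$; then $X_{n_k}\subseteq X_n\subseteq\overline U$, so $(X_{n_k})_{k\in\w}$ is a coregular skeleton. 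Given $k$ and a nonempty open $U\subseteq X_{n_k}$, the inductively superconnecting property of $(X_n)_{n\in\w}$ at the index $n_k$ yields $m$ with $\emptyset\ne X_m\subseteq\overline U$; picking $j$ with $n_j\ge m$ gives $\emptyset\ne X_{n_j}\subseteq X_m\subseteq\overline U$, where $X_{n_j}\ne\emptyset$ since it is infinite. Thus $(X_{n_k})_{k\in\w}$ is a superskeleton, and since each $X_{n_{k+1}}$ is nowhere dense in $X_{n_k}$ it is canonical. The only points needing care are these last ``index-pushing'' arguments (which rely on every $X_n$ being nonempty) and checking that Lemma~\ref{l:nodense} really applies at each stage --- both handled by the preliminary observations.
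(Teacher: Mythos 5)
Your proposal is correct and follows exactly the paper's route: the paper's entire proof is the one-line inductive application of Lemma~\ref{l:nodense} to the tails $(X_m)_{m\ge n_k}$, producing $n_{k+1}>n_k$ with $X_{n_{k+1}}$ nowhere dense in $X_{n_k}$. You merely spell out the routine verifications (that the tails are superconnecting skeleta of the infinite Hausdorff spaces $X_{n_k}$, and that the cofinal subsequence is again a superskeleton) which the paper leaves implicit.
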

 
 \begin{proof} Applying Lemma~\ref{l:nodense}, construct inductively an increasing number sequence $(n_k)_{n\in\w}$ such that $n_0=0$ and for every $k\in\w$ the set $X_{n_{n+1}}$ is nowhere dense in $X_{n_k}$. 
 \end{proof}
 
 A subset of a topological space  is called {\em regular open} if it coincides with the interior of its closure. A topological space $X$ is called {\em semiregular} if it is Hausdorff and has a base consisting of regular open sets.

\begin{lemma}\label{l:corsem} Each coregular topological space $X$ is semiregular.
\end{lemma}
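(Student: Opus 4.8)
The plan is to show that every coregular space $X$ has a base of regular open sets. Fix a point $x\in X$ and an open neighborhood $O\subseteq X$ of $x$; I must find a regular open set $W$ with $x\in W\subseteq O$. The idea is to use coregularity to ``shrink away'' the superconnected part of $X$. First I would pick any nonempty open set $U_1\subseteq X$ with $x\notin\overline{U_1}$ — this is possible because $X$ is Hausdorff and (being coregular but possibly superconnected) still separates points, so $x$ has a neighborhood missing some point $y$, and that point has a neighborhood whose closure avoids $x$. Then the complement $X\setminus\overline{U_1}$ is an open neighborhood of $x$, and by the definition of coregularity with the single set $U_1$, the subspace $X\setminus\overline{U_1}$ is regular.

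Next I would work inside the regular open subspace $R:=X\setminus\overline{U_1}$, which contains $x$. Since $R$ is regular and $O\cap R$ is an open neighborhood of $x$ in $R$, there is an open set $W_0$ in $R$ (equivalently in $X$, since $R$ is open in $X$) with $x\in W_0$ and $\operatorname{cl}_R(W_0)\subseteq O\cap R\subseteq O$. Now I would take $W:=\operatorname{int}_X(\overline{W_0})$, the regularization of $W_0$. Clearly $x\in W_0\subseteq W$ and $W$ is regular open by construction (the interior of a closed set is always regular open). It remains to check $W\subseteq O$. The point is that $\overline{W_0}$ is not much larger than $\operatorname{cl}_R(W_0)$: since $W_0\subseteq R$ and $R=X\setminus\overline{U_1}$ is open, any point of $\overline{W_0}\setminus R$ lies in $\overline{U_1}$, and I would argue $\operatorname{int}_X(\overline{W_0})$ cannot meet $\overline{U_1}$ — indeed if it did, that interior would be a nonempty open set contained in $\overline{U_1}$, hence meeting $U_1$, but $W_0\subseteq R$ is disjoint from $\overline{U_1}\supseteq U_1$, and one checks $\operatorname{int}_X(\overline{W_0})$ and $U_1$ are disjoint open sets (if an open set $V\subseteq\overline{W_0}$ met $U_1$ then $V\cap U_1$ would be a nonempty open set disjoint from $W_0$ yet inside $\overline{W_0}$, a contradiction). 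Therefore $W\subseteq R$, and then $W=\operatorname{int}_X(\overline{W_0})\subseteq\overline{W_0}\cap R$. Finally, $\overline{W_0}\cap R=\operatorname{cl}_R(W_0)\subseteq O$, so $W\subseteq O$, as required.

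The main obstacle I anticipate is the last inclusion $W\subseteq O$, i.e. controlling the $X$-closure of $W_0$ versus its $R$-closure: one must make sure that passing from the regular open subspace $R$ back to $X$ does not enlarge the relevant closure in a way that escapes $O$. The clean way around this is exactly the observation above that $\operatorname{int}_X(\overline{W_0})$ is disjoint from $U_1$ (and hence from $\overline{U_1}$, since a nonempty open subset of $\overline{U_1}$ must meet $U_1$), which pins $W$ inside $R$ where the regularity of $R$ already did the separation work. Everything else — that interiors of closures are regular open, that open subspaces inherit closures correctly, and that a coregular space is Hausdorff hence separates the needed points — is routine.
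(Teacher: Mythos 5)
Your proof is correct and follows essentially the same route as the paper's: apply coregularity to a single nonempty open set whose closure misses $x$, do the separation inside the resulting regular open subspace, and then regularize by taking the interior of the closure, the key point in both arguments being that this interior cannot meet the removed closed set since any nonempty open subset of $\overline{W_0}$ must meet $W_0$. The only (trivial) omission is the case $|X|\le 1$, where no nonempty open $U_1$ with $x\notin\overline{U_1}$ exists but semiregularity is automatic; the paper disposes of finite $X$ separately at the outset.
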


\begin{proof} If the space $X$ is finite, then it is discrete (being Hausdorff) and hence regular and semiregular. So, we assume that $X$ is infinite. 

To show that $X$ is semiregular, fix any point $x\in X$ and an open  neigborhood $U$ of $x$ in $X$. Taking into account that $X$ is infinite and Hausdorff, we can replace $U$ by a smaller neighborhood of $x$ and assume that $X$ contains a non-empty open set $W$, which is disjoint with $U$. Then $U\cap \overline W=\emptyset$. Since $X$ is coregular, the space $X\setminus \overline W$ is regular. Then the point $x$ has an open neighborhood $V\subseteq X\setminus \overline W$ such that $V\subseteq U$ and $\overline V\cap (X\setminus \overline W)\subseteq U$. Let $O$ be the interior of the set $\overline V$ in $X$.
Observe that $O\cap W\subseteq \overline V\cap W\subseteq\overline U\cap W=\emptyset$ and hence $O\cap \overline W=\emptyset$. Then $O\subseteq\overline V\setminus \overline W\subseteq U$. Taking into account that the set $O$ is regular open, we conclude that the space $X$ is semiregular.
\end{proof}

\begin{lemma}\label{l:product} A topological space $X$ is regular if and only if its square $X\times X$
is coregular.
\end{lemma}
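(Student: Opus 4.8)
The plan is to prove the two implications separately, with nearly all of the work going into the ``only if'' direction.

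\smallskip
\emph{If $X$ is regular.} Here I would just invoke the standard fact (see \cite{Eng}) that a product of regular spaces is regular, so $X\times X$ is regular and in particular Hausdorff. Then for any nonempty open sets $U_1,\dots,U_n\subseteq X\times X$ the complement $(X\times X)\setminus(\overline U_1\cap\dots\cap\overline U_n)$ is an open subspace of the regular space $X\times X$, and subspaces of regular spaces are regular; hence $X\times X$ is coregular. (In effect this direction only uses that every regular space is coregular.)

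\smallskip
\emph{If $X\times X$ is coregular.} If $|X|\le 1$ then $X$ is regular, so assume $|X|\ge 2$; then $X$ is Hausdorff, being homeomorphic to the subspace $X\times\{a\}$ of the Hausdorff space $X\times X$. Fix $p\in X$ and an open neighborhood $U\subseteq X$ of $p$; I must find an open $V\ni p$ with $\overline V\subseteq U$, which is clear if $U=X$, so assume $U\ne X$. The idea is to extract, via coregularity, a genuinely regular open subspace of $X\times X$ large enough to see $p$ together with the whole slice $X\times\{p\}$. Choose $y\in X\setminus\{p\}$ and, using Hausdorffness, disjoint open sets $A\ni p$ and $B\ni y$; then $p\notin\overline B$, so $p\notin\overline A\cap\overline B$. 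Put $W_1=A\times B$ and $W_2=B\times A$: these are nonempty and open in $X\times X$, and since $\overline{A\times B}=\overline A\times\overline B$, the closed set $K:=\overline{W_1}\cap\overline{W_2}$ equals $(\overline A\cap\overline B)\times(\overline A\cap\overline B)$. By coregularity of $X\times X$, the set $Z:=(X\times X)\setminus K$ is regular, and it is open in $X\times X$ since $K$ is closed. The crucial consequence of $p\notin\overline A\cap\overline B$ is that both $(p,p)$ and every point of the slice $X\times\{p\}$ lie in $Z$.

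\smallskip
Next I would apply regularity of $Z$ to the point $(p,p)$ and the set $D:=(X\setminus U)\times\{p\}$, which lies in $Z$, is closed in $Z$, and misses $(p,p)$ because $p\in U$. This produces disjoint $N\ni(p,p)$ and $M\supseteq D$, open in $Z$ and hence open in $X\times X$. Picking a basic open box $V_1\times V_2\subseteq N$ through $(p,p)$ and setting $V:=V_1\cap V_2$, one gets $V\times V\subseteq N$. To see $\overline V\subseteq U$, take any $z\in X\setminus U$; then $(z,p)\in D\subseteq M$, so there is a basic open box $M_1\times M_2\subseteq M$ with $z\in M_1$ and $p\in M_2$, and if some $w$ lay in $V\cap M_1$ then $(w,p)$ would lie in $(V\times V)\cap(M_1\times M_2)\subseteq N\cap M=\emptyset$, a contradiction. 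Hence $z\notin\overline V$, and since $z$ was arbitrary, $\overline V\subseteq U$, giving the desired closed neighborhood of $p$.

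\smallskip
The one real obstacle is the gap between ``coregular'' and ``regular'': $X\times X$ itself is only coregular, so one cannot separate $(p,p)$ from $X\setminus U$ in $X\times X$ directly. The boxes $W_1,W_2$ are introduced precisely so that coregularity carves out an honestly regular open subspace $Z$ which still contains $(p,p)$ and the whole slice $X\times\{p\}$ — exactly what the closed-neighborhood argument needs. Everything else is routine manipulation of basic open boxes in the product, together with $\overline{A\times B}=\overline A\times\overline B$ and the heredity of regularity.
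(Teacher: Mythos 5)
Your proof is correct, and its key idea coincides with the paper's: apply coregularity of $X\times X$ to box-shaped open sets so that the regular complement of the intersection of their closures still contains a whole horizontal slice through the point of interest. The paper does this more economically: for a point $x$ it picks (using Hausdorffness, exactly as you pick $B$) a single nonempty open $U$ with $x\notin\overline U$, notes that $\overline{U\times U}=\overline U\times\overline U$ misses the slice $\{x\}\times X$, and then simply observes that $\{x\}\times X\cong X$ is a subspace of the regular space $(X\times X)\setminus\overline{U\times U}$, so $X$ is regular by heredity of regularity --- no explicit separation argument is needed. In your own setup the same shortcut is available: once you know $X\times\{p\}\subseteq Z$ and $Z$ is regular, the homeomorphism $X\cong X\times\{p\}$ already gives regularity of $X$, so the entire second half of your argument (the set $D$, the separating sets $N,M$, and the box manipulations producing $V$ with $\overline V\subseteq U$) is correct but redundant; it amounts to reproving by hand that a subspace of a regular space is regular. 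Two minor further comparisons: you use two boxes $A\times B$ and $B\times A$ where one box suffices, and the paper disposes of small spaces via ``finite Hausdorff $\Rightarrow$ discrete'' while your case split $|X|\le 1$ versus $|X|\ge 2$ works just as well (indeed your argument never needs $X$ infinite). The easy direction is handled identically in both proofs.
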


\begin{proof} The ``only if'' part is trivial. To prove the ``if'' part, assume that the space $X\times X$ is coregular.   Then $X\times X$ is Hausdorff and so is the space $X$. If $X$ is finite, then $X$ is discrete and hence regular. So, assume that $X$ is infinite. Then we can fix any point $x\in X$ and find a non-empty open set $U\subseteq X$ such that $x\notin \overline{U}$. By the coregularity of $X\times X$ the compement $X\times X\setminus\overline{U\times U}$ is a regular space and so is its subspace $\{x\}\times X$ and the space $X$.
\end{proof} 

\section{Main Results}\label{s:main}

In this section we prove a difficult Theorem~\ref{t:key} implying Theorem~\ref{t:main}  and many other important properties of the space $\IQ\mathsf P^\infty$. Let us recall that a function $f:X\to Y$ between topological spaces $X,Y$ is called a {\em topological embedding} if $f$ is a homeomorphism between $X$ and the subspace $f(X)$ of $Y$.

\begin{theorem}\label{t:key} Let $X$ be a countable second-countable space, $(X_n)_{n\in\w}$ be a coregular skeleton in $X$, and $A$ be a nowhere dense closed  set in $X$. Let $Y$ be a countable second-countable space, $(Y_n)_{n\in\w}$ be a canonical superskeleton in $Y$, and $B$ be a subset of $Y$ such that for every $n\in\w$ the intersection $\bar B\cap Y_n$ is nowhere dense in $Y_n$. Let $f:A\to B$ be a homeomorphism such that $f^{-1}(Y_n)=A\cap X_n$ for all $n\in\w$. Then there exist 
a topological embedding $\bar f:X\to Y$ such that $\bar f{\restriction}A=f$ and $\bar f^{-1}(Y_n)=X_n$ for all $n\in\w$. If the sequence $(X_n)_{n\in\w}$ is a canonical superskeleton in $X$, the set $B$ is closed in $Y$, and for every $n\in\w$ the set $A\cap X_n$ is nowhere dense in $X_n$, then $\bar f(X)=Y$ and $\bar f$ is a homeomorphism.
\end{theorem}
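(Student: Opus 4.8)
The plan is to prove Theorem~\ref{t:key} by a back-and-forth construction, enumerating the countably many basic open sets of $X$ (and, in the "onto" case, of $Y$) and building $\bar f$ as a limit of finite partial homeomorphisms that respect the skeleta. Since $X$ and $Y$ are countable and second-countable, and by Lemma~\ref{l:supercrowded} the spaces $X_n, Y_n$ are crowded, the underlying point-sets $X\setminus A$ and $Y\setminus B$ are countable, and $Y\setminus B$ is "densely large" inside each $Y_n$ (because $\bar B\cap Y_n$ is nowhere dense in $Y_n$, which is crowded). First I would set up bookkeeping: fix enumerations $X\setminus A = \{a_k : k\in\w\}$, $Y\setminus B=\{b_k:k\in\w\}$, and countable bases; at stage $k$ we will have a finite partial injection $f_k\supseteq f$ (on a finite subset of $X\setminus A$) which, together with $f$, is "locally correct" in the sense that it extends to a homeomorphism of appropriate open neighborhoods and sends $X_n\cap(\text{domain})$ into $Y_n$. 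The key point is that the germ of $f$ near any point $x\in X_n\setminus A$ — and near any newly added point — can be matched because, by Lemma~\ref{l:corsem}, coregular spaces are semiregular, so $X$ and $Y$ have bases of regular open sets, and a partial homeomorphism between countable crowded semiregular spaces extends to include any prescribed new point, provided the "level" data ($x\in X_n\setminus X_{n+1}$) is preserved.

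The heart of the argument is the extension step. Given $f_k$ and a new point $a=a_{k+1}\in X_n\setminus X_{n+1}$ (with $n$ the exact level), I must find $b\in Y$ with: (a) $b\in Y_n\setminus Y_{n+1}$ (so the level is matched), (b) $b\notin \bar B$ if we want to stay off $B$ — actually $b\in Y\setminus(B\cup f_k(\text{dom}))$ — and (c) the local-correctness invariant is maintained, i.e. $f_{k+1}=f_k\cup\{(a,b)\}$ still extends to a homeomorphism of shrinking neighborhoods. Because $A$ is nowhere dense and closed in $X$ and $(X_n)$ is a coregular skeleton, the "level-$n$ stratum" $X_n\setminus X_{n+1}$ is an open regular subspace of the regular space $X\setminus X_{n+1}$ minus the nowhere dense set $A$, and similarly on the $Y$ side $Y_n\setminus Y_{n+1}$ is regular with $\bar B\cap Y_n$ nowhere dense and $Y_{n+1}$ nowhere dense in $Y_n$ (canonicity). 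Thus within each stratum I am matching points of countable crowded metrizable (hence, after Urysohn, homeomorphic-to-$\IQ$) spaces avoiding a nowhere dense closed set, and the existence of a suitable $b$ reduces to the homogeneity and universality of the rationals. The inductive superconnecting property of $(Y_n)$ is what lets the neighborhoods "leak" correctly across strata: when a basic neighborhood $U\ni a$ in $X$ meets $X_{n+1}$, its image must have closure containing some $Y_m$, and this is exactly guaranteed by clause (ii) of the superskeleton definition, so the closure relations demanded of a topological embedding are automatically consistent.

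For the "onto" half, I would interleave the forth steps above with back steps that force every $b_k\in Y\setminus B$ into the range, using now that $(X_n)$ is itself a canonical superskeleton and $A\cap X_n$ is nowhere dense in $X_n$ — so the $X$ side is symmetric to the $Y$ side, and the same stratum-by-stratum matching works in reverse. The resulting bijection $\bar f:X\to Y$ then satisfies $\bar f\restriction A=f$, $\bar f^{-1}(Y_n)=X_n$, and is continuous with continuous inverse because at every point the construction guarantees that preimages of basic neighborhoods are (eventually) open, the invariant having been maintained on a neighborhood basis at each point. The main obstacle I expect is verifying that the local-correctness invariant is genuinely preservable at the extension step — that is, producing the neighborhood of the new point $b$ and checking it is homeomorphic (compatibly with $f_k$) to the chosen neighborhood of $a$, uniformly in all the skeleton constraints simultaneously. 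This is where the bulk of the technical work lies, and it is presumably why the authors flag the proof as "long and technical"; the combinatorial skeleton of the back-and-forth is standard, but threading the coregularity/semiregularity, the nowhere-density of $A$ and $\bar B\cap Y_n$, and the inductive superconnecting clause through a single coherent invariant is delicate.
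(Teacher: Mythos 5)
Your overall plan --- a back-and-forth over the countable point sets, matching levels ($x\in X_n\setminus X_{n+1}$ goes to $y\in Y_n\setminus Y_{n+1}$) and interleaving ``back'' steps only in the symmetric case --- has the same shape as the paper's argument. But there is a genuine gap at exactly the point you flag yourself: the ``local-correctness invariant'' is never defined, and the extension principle you invoke to maintain it (``a partial homeomorphism between countable crowded semiregular spaces extends to include any prescribed new point, provided the level data is preserved'') is not a theorem; it is essentially the homogeneity statement that Theorem~\ref{t:key} is designed to establish, so the argument is circular. No such general principle holds for countable crowded superconnected Hausdorff spaces --- the Golomb space is one and is topologically rigid. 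The stratum-by-stratum reduction to copies of $\IQ$ also does not work: the strata $X_n\setminus X_{n+1}$ are not open in $X$ (only in $X_n$), a neighborhood of a point of level $n$ necessarily meets strata of all levels $\le n$, and on the $X$ side of the embedding statement the skeleton is merely coregular, so the strata need not even be crowded. Consequently continuity of $\bar f$ at a point cannot be arranged inside a single stratum, and the consistency of the closure relations across strata is not ``automatic'' from the inductively superconnecting clause.

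The missing idea is the device that makes the invariant precise and preservable. Alongside the points $x_n,y_n$, the paper constructs for each pair $(n,k)$ neighborhoods $U_{n,k}\ni x_n$ and $V_{n,k}\ni y_n$ whose boundaries are \emph{exactly} skeleton sets: $\partial U_{n,k}=X_{\ell(n,k)}$ and $\partial V_{n,k}=Y_{\ell(n,k)}\subseteq\overline{V_{n,k}\cap Y_{\ell(n)}}$. This uses coregularity to make the complements $X\setminus X_l$, $Y\setminus Y_l$ metrizable and strongly zero-dimensional (so clopen-off-$X_l$ sets exist) and the superconnecting property to force a whole skeleton set into the boundary. With boundaries pinned to skeleton sets, membership of any point in $U_{i,j}$, in $\overline U_{\!i,j}$, or in neither is governed by its level together with finitely many clopen conditions, and the invariant becomes the concrete combinatorial requirement that these membership patterns agree on the two sides for all previously constructed indices. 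Realizing a prescribed pattern when selecting $y_n$ then still requires a separate argument: an auxiliary lemma producing points at a prescribed level avoiding finitely many of the closures $\overline V_{\!i,j}$, applied through an interleaved decomposition of the index set according to the chain of levels $\ell(i_1,j_1)>\ell(i_1)\ge\ell(i_2,j_2)>\dots$. None of this appears in your outline, so while the framework is right, the construction that makes the back-and-forth close is absent.
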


\begin{proof} For constructing the topological embedding $\bar f:X\to Y$ we should make some preliminary work with the spaces $X$ and $Y$.

We start with the space $X$ endowed with a coregular skeleton $(X_n)_{n\in\w}$. 
Let $\ell_X:X\to\w$ be the function assigning to each $x\in X$ the largest number $n$ such that $x\in X_n$ (such the number $n$ exists because $\bigcap_{i\in\w}X_i=\emptyset$).  The function $\ell_X$ will be called the {\em  level map} of $X$. 

Denote by $\tau_X$ the topology of $X$.  Using the first-countability of $X$, for each point $x\in X$, fix a neighborhood base $\{O^X_n(x)\}_{n\in\w}$ at $x$ such that $O^X_{n+1}(x)\subseteq O^X_n(x)\subseteq X\setminus X_{1+\ell_X(x)}$ for every $n\in\w$.  Fix a well-order $\preceq_X$ on the set $X':=X\setminus A$ such that for any $x\in X'$ the set ${\downarrow}x:=\{z\in X':z\preceq_X x\}$ is finite. For a nonempty subset $S\subseteq X'$ by $\min S$ we denote the smallest element of the set $S$ with respect to the well-order $\preceq_X$.

Next, do the same for the space $Y$ endowed with a canonical superskeleton $(Y_n)_{n\in\w}$. Denote by $\tau_Y$ the topology of $Y$.
Let  $\ell_Y:Y\to\w$ be the function assigning to each $y\in Y$ the largest number $n$ such that $x\in Y_n$. 
Using the first countability of $Y$, for every point $y\in Y$ choose a neighborhood base $\{O^Y_n(y)\}_{n\in\w}$ at $y$ such that $O^Y_{n+1}(y)\subseteq O_n^Y(y)\subseteq Y\setminus Y_{1+\ell_Y(y)}$ for all $n\in\w$. Fix a well-order $\preceq_Y$ on the set $Y'=Y\setminus \overline B$ such that for any $y\in Y$ the set ${\downarrow}y:=\{z\in Y:z\preceq_Y y\}$ is finite.
\smallskip

If the set $X'=X\setminus A$ is finite, then let $\bar f:X\to Y$ be any injective function such that $\bar f{\restriction}A=f$ and $f(x)\in \ell_Y^{-1}(\ell_X(x))\setminus\bar B$ for any $x\in X'$. The choice of $\bar f$ is possible since for every $n\in\w$ the sets $\bar B\cap Y_n$ and $Y_{n+1}$ are nowhere dense in $Y_n$. Then $\bar f$ is a required extension of $f$. 

So, we assume that the open subspace $X'=X\setminus A$ of $X$ is infinite. In this case we shall construct the topological embedding $\bar f$  by induction over the index set $\Gamma=\w\cup(\w\times\w)$ endowed with the strict well-order $\prec$ uniquely defined by the following conditions:
\begin{itemize}
\item[a)] for numbers $n,m\in\w$ we have $n\prec m$ iff $n< m$;
\item[b)] for a number $n\in\w$ and a pair $(i,j)\in\w\times\w$ we have $(i,j)\prec n$ iff $i+j<n$; 
\item[c)] for a pair $(i,j)\in\w\times\w$ and a number $n\in\w$ we have $n\prec (i,j)$ iff $n\le i+j$;
\item[d)] for two pairs $(i,j),(n,m)\in\w\times \w$ we have $(i,j)\prec(n,m)$ iff either $i+j<n+m$ or $i+j=n+m$ and $i<n$.
\end{itemize}
The initial elements of the well-ordered set $\Gamma$ are:
$$0,(0,0),1,(1,0),(0,1),2,(2,0),(1,1),(0,2),3,(3,0),(2,1),(1,2),(0,3),4,\dots $$
For every element $\gamma\in \Gamma$ let ${\downarrow}\gamma=\{\alpha\in\Gamma:\alpha\prec \gamma\}$. Writing $k\in{\downarrow}\gamma$ (resp. $(i,j)\in{\downarrow}\gamma$) we shall understand that $k\in\w\cap{\downarrow}\gamma$ (resp. $(i,j)\in(\w\times\w)\cap{\downarrow}\gamma$).
\smallskip

Write the set $\w$ as the union $\Omega\cup\overrightarrow\Omega\cup\overleftarrow\Omega$ of pairwise disjoint sets $\Omega,\overrightarrow\Omega,\overleftarrow\Omega$ such that $|\Omega|=|A|$, $|\overrightarrow\Omega|=|X'|=\w$, $0\in\overrightarrow\Omega$, and $|\overleftarrow \Omega|\in\{0,\w\}$. We choose the set $\overleftarrow\Omega$ to be infinite iff  the skeleton $(X_n)_{n\in\w}$ is a canonical superskeleton for $X$, the set $B$ is closed in $Y$, and for every $n\in\w$ the set $A\cap X_n$ is nowhere dense in $X_n$. Let $\xi:\Omega\to A$ be a bijective function.

Now we are ready to start the inductive construction of the topological embedding $\bar f:X\to Y$ extending the homeomorphism $f$.

Inductively we shall construct sequences of points $\{x_n\}_{n\in\w}\subseteq X$, $\{y_n\}_{n\in\w}\subseteq Y$, a double sequences of open sets $\{U_{n,k}\}_{n,k\in\w}\subseteq\tau_X$, $\{V_{n,k}\}_{k,n\in\w}\subseteq\tau_Y$, and a function $\ell:\Gamma\to\w$ 
such that for any $\gamma\in \Gamma$ the following conditions are satisfied:
\begin{enumerate}
\item If $\gamma=n$ for some number $n\in\w$, then 
\begin{itemize}
\item[(1a)] $\ell(\gamma)=\ell_X(x_n)=\ell_Y(y_n)$;
\item[(1b)] $x_n\notin\{x_k\}_{k\in{\downarrow}\gamma}$ and  $y_n\notin\{y_k\}_{k\in{\downarrow}\gamma}$;
\item[(1c)] $\{(i,j)\in{\downarrow}\gamma:x_n\in U_{i,j}\}=\{(i,j)\in{\downarrow}\gamma:y_n\in V_{i,j}\}$;
\item[(1d)] $\{(i,j)\in{\downarrow}\gamma:x_n\in \overline U_{\!i,j}\}=\{(i,j)\in{\downarrow}\gamma:y_n\in \overline V_{\!i,j}\}$;
\item[(1e)] If $n\in\Omega$, then $x_n=\xi(n)$ and $y_n=f(x_n)$;
\item[(1f)] If $n\in\overrightarrow\Omega$, then $x_n=\min (X'\setminus \{x_k\}_{k\in{\downarrow}\gamma})$ and $y_n\notin \overline B$;
\item[(1g)] If $n\in\overleftarrow\Omega$, then $y_n=\min (Y'\setminus \{y_k\}_{k\in{\downarrow}\gamma})$ and $x_n\notin A$.
\end{itemize}
\item If $\gamma=(n,k)$ for some $n,k\in\w$, then 
\begin{itemize}
\item[2a)] $\ell(\gamma)\ge 2+\max\{\ell(\alpha):\alpha\in{\downarrow}\gamma\}$;
\item[2b)] for any $m\in\w\cap{\downarrow}\gamma$ with $m\ne n$, we have $x_m\notin \overline{U}_{\!n,k}$ and $y_m\notin\overline{V}_{\!n,k}$;
\item[2c)] $x_n\in U_{n,k}\subseteq O^X_{k}(x_n)\subseteq X\setminus X_{1+\ell(n)}$ and $y_n\in V_{n,k}\subseteq O^Y_{k}(x_n)\subseteq Y\setminus Y_{1+\ell(n)}$;
\item[2d)] $\{(i,j)\in{\downarrow}\gamma:U_{n,k}\subseteq U_{i,j}\}= \{(i,j)\in{\downarrow}\gamma:x_{n}\in U_{i,j}\}$ and \newline$\{(i,j)\in{\downarrow}\gamma:V_{n,k}\subseteq V_{i,j}\}= \{(i,j)\in{\downarrow}\gamma:y_n\in V_{i,j}\}$;
\item[2e)] $\{(i,j)\in{\downarrow}\gamma:U_{n,k}\cap \overline U_{\!i,j}=\emptyset\}= \{(i,j)\in{\downarrow}\gamma:x_{n}\notin\overline U_{\!i,j}\}$ and\newline $\{(i,j)\in{\downarrow}\gamma:V_{n,k}\cap\overline V_{\!i,j}=\emptyset\}= \{(i,j)\in{\downarrow}\gamma:y_n\notin \overline V_{\!i,j}\}$;
\item[2f)] $X_{\ell(\gamma)}=\partial U_{n,k}$ and $Y_{\ell(\gamma)}=\partial V_{n,k}\subseteq\overline{V_{n,k}\cap Y_{\ell(n)}}$;
\item[2g)] if $n\in\Omega$, then $f(U_{n,k}\cap A)=V_{n,k}\cap B$;
\item[2h)] If $n\notin\Omega$, then $U_{\!n,k}\cap A=\emptyset=V_{n,k}\cap \bar B$;
\item[2i)] If $\overleftarrow\Omega\ne\emptyset$, then $X_{\ell(\gamma)}=\partial U_{n,k}\subseteq\overline U_{\!n,k}\cap X_{\ell(n)}$.
\end{itemize}
\end{enumerate}
\smallskip

0. We start the inductive construction letting $x_0$ be the smallest point of the well-ordered set $(X',\preceq_X)$. Since the set $Y_{1+\ell_X(x_0)}$ is nowhere dense in $Y_{\ell_X(x_0)}$, the set $Y_{\ell_X(x_0)}\setminus Y_{1+\ell_X(x_0)}$ is not empty and hence contains some point $y_0$. 
Such choice of $y_0$ guarantees that the condition (1) is satisfied for $\gamma=0\in\overrightarrow\Omega$.
\smallskip

Now assume that for some $\gamma\in\Gamma$, we have defined the function $\ell$  on the set ${\downarrow}\gamma$ and constructed points $x_n,y_n$ and  open sets $U_{i,j},V_{i,j}$ for all $n\in\w\cap{\downarrow}\gamma$ and $(i,j)\in(\w\times\w)\cap{\downarrow}\gamma$ so that the inductive conditions (1)--(2) are satisfied. 

To fulfill the inductive step, consider two possible cases.
\smallskip

1. First assume that $\gamma=n$ for some number $n\in\w$. This case has three subcases.

$1'$. If $n\in\Omega$, then put $x_n=\xi(n)\in A$ and $y_n=f(x_n)\in B$. Our assumption on the map $f$ ensures that $\ell_X(x_n)=\ell_Y(y_n)$. So we can put $\ell(\gamma):=\ell_X(x_n)=\ell_Y(y_n)$ and see that the inductive conditions (1a), (1b) are satisfied. To see that (1c) is satisfied, take any pair $(i,j)\prec \gamma=n$.

First we assume that $x_n\in U_{i,j}$.  If $x_i\notin A$, then $i\notin \Omega$ and we obtain a contradiction $x_n\in A\cap U_{i,j}=\emptyset$ applying the inductive condition (2h). This contradiction shows that $x_i\in A$. Then $y_n=f(x_n)\in f(A\cap U_{i,j})=B\cap V_{i,j}\subseteq V_{i,j}$ by the inductive condition (2g). By analogy we can show that $y_n\in V_{i,j}$ implies $x_n\in U_{i,j}$. This means that the condition (1c) is satisfied. 

Now assume that $x_n\in\overline U_{\!i,j}$. If $x_n\in U_{i,j}$, then $y_n\in V_{i,j}\subseteq\overline V_{i,j}$ by the (already proved) inductive condition (1c). So, we assume that $x_n\in\overline U_{\!i,j}\setminus U_{i,j}=\partial U_{i,j}=X_{\ell(i,j)}$ (for the last equality, see the inductive condition (2f)). Then $\ell_Y(y_n)=\ell_X(x_n)\ge\ell(i,j)$ and hence $y_n\in Y_{\ell(i,j)}\subseteq \overline V_{\!i,j}$ by the condition (2f). By analogy we can prove that $y_n\in \overline V_{\!i,j}$ implies $x\in\overline U_{\!i,j}$. This means that the condition (1d) is satisfied. It is clear that the conditions (1e)--(1g) are satisfied, too.
\smallskip

$1''$. Next, assume that $n\in\overrightarrow\Omega$. This case requires much more work. Define the point $x_n$ by the formula (1f) and put $\ell(n)=\ell_X(x_n)$. It remains to find a  point $y_n\in Y$ satisfying the conditions  (1a)--(1d).

\begin{lemma}\label{l2} For any nonempty set $J\subseteq (\w\times\w)\cap{\downarrow}\gamma$, integer number $l<\min\{\ell(i,j):(i,j)\in J\}$, and  nonempty open set $W\subseteq Y$ such that $W\cap \bigcap_{(i,j)\in J}Y_{\ell(i,j)-1}\ne\emptyset$, there exists a point $y\in W\cap Y_l\setminus Y_{l+1}$ such that $y\notin \bigcup_{(i,j)\in J}\overline{V}_{i,j}$.
\end{lemma}

\begin{proof}  Write the set $J$ as $\{(i_1,j_1),\dots,(i_m,j_m)\}$ for some pairs $(i_m,j_m)\prec\dots\prec(i_1,j_1)$. If $J$ is empty, then $m=0$. The inductive condition (2a) ensures that $\ell(i_k,j_k)+2\le \ell(i_{k-1},j_{k-1})$ for any $k\in\{2,\dots,m\}$. 

Let $v_1$ be any point in the set $W\cap Y_{\ell(i_1,j_1)-1}\setminus Y_{\ell(i_1,j_1)}$. Such point exists since the intersection $W\cap Y_{\ell(i_1,j_1)-1}$ is nonempty and $Y_{\ell(i_1,j_1)}$ is nowhere dense in $Y_{\ell(i_1,j_1)-1}$. 

By the inductive conditions (2c),(2f),(2a) we have $$\overline{V}_{\!i_1,j_1}=V_{i_1,j_1}\cup\partial V_{i_1,j_1}\subseteq (Y\setminus Y_{1+\ell(i_1)})\cup Y_{\ell(i_1,j_1)}$$ and $\ell(i_1)+2\le\ell(i_1,j_1)$. Hence, the set $Y_{\ell(i_1,j_1)-1}\setminus Y_{\ell(i_1,j_1)}$ is disjoint with $\overline{V}_{\!i_1,j_1}$. Then $v_1\notin  \overline{V}_{\!i_1,j_1}$ and we can choose an open neighborhood $W_1\subseteq W$ of $v_1$ such that $W_1\cap \overline{V}_{i_1,j_1}=\emptyset$.
\smallskip

Inductively we shall construct a sequence of points $v_2,\dots,v_m\in Y$ and a sequence of open sets $W_2,\dots,W_m$ in $Y$ such that for every $k\in\{2,\dots,m\}$ the following conditions are satisfied:
\begin{itemize}
\item[(i)] $v_k\in W_{k-1}\cap Y_{\ell(i_k,j_k)-1}\setminus Y_{\ell(i_k,j_k)}$;
\item[(ii)] $v_k\in W_k\subseteq W_{k-1}\setminus \overline{V}_{\!i_k,j_k}$.
\end{itemize}
Assume that for some $k\in\{2,\dots,m\}$ we have constructed  points 
$v_1,\dots,v_{k-1}$ and an open set $W_1,\dots,W_{k-1}$ satisfying the conditions (i), (ii). Since $v_{k-1}\in Y_{\ell(i_{k-1},j_{k-1})-1}\subseteq Y_{\ell(i_k,j_k)}$ and the set $Y_{\ell(i_k,j_k)}$ is nowhere dense in $Y_{\ell(i_k,j_k)-1}$, 
we can choose a point $v_k\in W_{k-1}\cap Y_{\ell(i_k,j_k)-1}\setminus Y_{\ell(i_k,j_k)}$.
By the inductive conditions (2c) and (2f), $\overline{V}_{\!i_k,j_k}=V_{i_k,j_k}\cup\partial V_{i_k,j_k}\subseteq (Y\setminus Y_{1+\ell(i_k)})\cup Y_{\ell(i_k,j_k)}$ and $\ell(i_k)+1\le \ell(i_k,j_k)-1$. Consequently, the set $Y_{\ell(i_k,j_k)-1}\setminus Y_{\ell(i_k,j_k)}$ is disjoint with $\overline{V}_{\!i_k,j_k}$. So, $v_k\notin  \overline{V}_{\!i_k,j_k}$ and we can choose an open neighborhood $W_k\subseteq W_{k-1}$ of $v_k$ such that $W_k\cap \overline{V}_{i_k,j_k}=\emptyset$.
This completes the inductive step.

After completing the inductive construction, consider the point $v_m\in Y_{\ell(i_m,j_m)-1}$ and its neighborhood $W_m\subseteq W$. The inductive condition (ii) guarantees that $$W_m\cap\bigcup_{k=1}^m\overline{V}_{\!i_k,j_k}=\bigcup_{k=1}^m(W_m\cap \overline{V}_{\!i_k,j_k})\subseteq \bigcup_{k=1}^m(W_k\cap \overline{V}_{\!i_k,j_k})=\emptyset.$$

Taking into account that $l<\min\{\ell(i,j):(i,j)\in J\}\le\ell(i_m,j_m)$, we conclude that $v_m\in Y_{\ell(i_m,j_m)-1}\subseteq Y_l$. Since the set $Y_{l+1}$ is nowhere dense in $Y_l$, there exists a point $y\in W_m\cap Y_l\setminus Y_{l+1}$. Since $W_m$ is disjoint with $\bigcup_{(i,j)\in J}\overline{V}_{\!i,j}$, the point $y_n$ does not belong to   
$\bigcup_{(i,j)\in J}\overline{V}_{i,j}$.
\end{proof}

Now we are able to find a $y_n$ satisfying the conditions (1a)--(1d).

Consider the sets $I(x_n)=\{(i,j)\in{\downarrow}\gamma:x_n\in U_{i,j}\}$ and $J(x_n)=\{(i,j)\in{\downarrow}\gamma:x_n\notin\overline{U}_{i,j}\}$. 

\begin{claim}\label{cl3}
\begin{enumerate}
\item For any $(i,j)\in I(x_n)\cup J(x_n)$ we have $\ell_X(x_n)<\ell(i,j)$.
\item For any $(i,j)\in I(x_n)$ we have $\ell_X(x_n)\le\ell(i)$.
\end{enumerate} 
\end{claim}

\begin{proof} 1.  If $(i,j)\in I(x_n)\cup J(x_n)$, then $x_n\notin \partial U_{i,j}=X_{\ell(i,j)}$ and hence $\ell_X(x_n)<\ell(i,j)$.

2. Now assume that $(i,j)\in I(x_n)$. Then $x_n\in U_{i,j}\subseteq X\setminus X_{1+\ell_X(x_i)}$ and hence $\ell_X(x_n)<1+\ell_X(x_i)=1+\ell(i)$ according to (1a).
\end{proof}

Choose a minimal subset $I\subseteq I(x_n)$ such that for every $(i,j)\in I(x_n)$ there exists $(p,q)\in I$ such that $U_{p,q}\subseteq U_{i,j}$. It is clear that $\bigcap_{(i,j)\in I(x_n)}U_{i,j}=\bigcap_{(i,j)\in I}U_{i,j}$.

\begin{claim}\label{cl3a} $\bigcap_{(i,j)\in I(x_n)}V_{i,j}=\bigcap_{(i,j)\in I}V_{i,j}$.
\end{claim}

\begin{proof} It suffices to show that for any $(i,j)\in I(x_n)$ there exists $(p,q)\in I$ such that $V_{p,q}\subseteq V_{i,j}$.
Given any pair $(i,j)\in I(x_n)$, find a  $(p,q)\in I$ such that $x_p\in U_{p,q}\subseteq U_{i,j}$ (such a pair exists by the choice of the set $I$).
If $(i,j)\prec p$, then $y_p\in V_{i,j}$ by the condition  (1c) and then $V_{p,q}\subseteq V_{i,j}$ by the condition (2d).

If $p\prec (i,j)$, the the condition (2b) implies that $p=i$ and condition (2d) ensures that $V_{p,q}\subseteq V_{i,j}$.
\end{proof}

\begin{claim}\label{cl5} For any pairs $(i,j)\prec(p,q)$ in $I$ we have  $\ell(p)\ge \ell(i,j)>\ell(i)$.
\end{claim}

\begin{proof} First we show that $x_p\in\partial U_{i,j}$. Assuming that $x_p\notin \partial U_{i,j}$, we conclude that $x_p\in U_{i,j}$ or $x_p\notin\overline{U}_{\!i,j}$. If $x_p\in U_{i,j}$, then the inductive condition (2d) guarantees that  $x_p\in U_{p,q}\subseteq U_{i,j}$ and the minimality of $I$ ensures that $(i,j)\notin  I$, which contradicts our assumption. So, $x_p\notin \overline{U}_{\!i,j}$. In this case, $U_{p,q}\cap \overline{U}_{\!i,j}=\emptyset$ by the condition (2e), but this contradicts $x_n\in U_{i,j}\cap U_{p,q}$.  Therefore, $x_p\in\partial U_{i,j}=X_{\ell(i,j)}$ and $\ell(p)=\ell_X(x_p)\ge \ell(i,j)>\ell_X(x_i)=\ell(i)$.
\end{proof}

Write the set $I$ as $\{(i_1,j_1),\dots,(i_m,j_m)\}$ for some pairs $(i_m,j_m)\prec\dots\prec(i_1,j_1)$. If $I$ is empty, then $m=0$. Claims~\ref{cl5} and \ref{cl3} imply  that $$\ell(i_1,j_1)>\ell(i_1)\ge \ell(i_2,j_2)>\ell(i_2)\ge\dots \ge\ell(i_m,j_m)>\ell(i_m)\ge \ell_X(x_n).$$
This chain of inequalities allows us to write the set $J(x_n)$ as the union $$J(x_n)=\Big(\bigcup_{k=1}^{m}J_k\Big)\cup\Big(\bigcup_{k=0}^mJ_k'\Big)$$ of the sets
$$
\begin{aligned}
&J_k=\{(i,j)\in J(x_n):\ell(i_k,j_k)>\ell(i,j)>\ell(i_k)\}\mbox{ \ for  $k\in\{1,\dots,m\}$,}\\
&J_0'=\{(i,j)\in J(x_n):\ell(i,j)>\ell(i_1,j_1)\},\\
&J_k'=\{(i,j)\in J(x_n):\ell(i_k)>\ell(i,j)>\ell(i_{k+1},j_{k+1})\}\mbox{ \ for  $k\in\{1,\dots,m-1\}$,}\\
&J'_m=\{(i,j)\in J(x_n):\ell(i_m)>\ell(i,j)>\ell_X(x_n)\}.
\end{aligned}
$$ 
Since the sets $\{\ell(i,j):(i,j)\in J(x_n)\}$ and $\{\ell(i_k),\ell(i_k,j_k)\}_{k=1}^m$ are disjoint, the union\break $\bigcup_{k=1}^{m}J_k\cup\bigcup_{k=0}^mJ_k'$ is indeed equal to $J(x_n)$.

By Lemma~\ref{l2}, there exists a point $v_0'\in Y_{\ell(i_1,j_1)}$ such that $v'_0\notin\bigcup_{(i,j)\in J'_0}\overline{V}_{i,j}$. Then $W'_0:=Y\setminus \bigcup_{(i,j)\in J'_0}\overline{V}_{i,j}$ is an open neighborhood of $v'_0$.

Inductively we shall construct a sequence of points $v_1,v'_1,,\dots,v_m,v'_m$ and a sequence of open sets $W_1\supseteq W_1'\supseteq\dots\supseteq W_m\supseteq W_m'$ in $Y$ such that for every $k\in\{1,\dots,m\}$ the following conditions are satisfied:
 \begin{enumerate}
 \item[(a)] $v_{k}\in W'_{k-1}\cap V_{i_k,j_k}\cap Y_{\ell(i_k)}$;
 \item[(b)] $v_k\in W_k=W'_{k-1}\cap V_{i_k,j_k}$;
 \item[(c)] $W_k\cap\bigcup_{(i,j)\in J_k}\overline{V}_{\!i,j}=\emptyset$;
 \item[(d)] $v'_k\in W_k'\subseteq W_k\setminus\bigcup_{(i,j)\in J'_k}\overline{V}_{\!i,j}$; 
 \item[(e)] if $k<m$, then $v'_k\in Y_{\ell(i_{k+1},j_{k+1})}$;
 \item[(f)] $v'_m\in Y_{\ell_X(x_n)}\setminus Y_{1+\ell_X(x_n)}$.
\end{enumerate}
 
To make an inductive step, assume that for some $k\in\{1,\dots,m\}$ a point $v'_{k-1}$ and an open set $W'_{k-1}$ with $v_{k-1}'\in W_{k-1}\cap Y_{\ell(i_k,j_k)}$ have been constructed. By the inductive condition (2f), $Y_{\ell(i_k,j_k)}=\partial V_{i_k,j_k}\subseteq \overline{ V_{i_k,j_k}\cap Y_{\ell(i_k)}}$. Consequently, there exists a point $v_k\in W'_{k-1}\cap  V_{i_k,j_k}\cap Y_{\ell(i_k)}$. 
Put $W_k:=W_{k-1}\cap V_{i_k,j_k}$. It is clear that the inductive conditions (a), (b) are satisfied.

\begin{claim}\label{cl5a} $V_{i_k,j_k}\cap\bigcup_{(i,j)\in J_k}\overline{V}_{\!i,j}=\emptyset$.
\end{claim}

\begin{proof} To derive a contradiction, assume that $V_{i_k,j_k}\cap\overline{V}_{i,j}\ne\emptyset$ for some $(i,j)\in J_k$. The definition of the set $J_k\ni (i,j)$ yields  $\ell(i_k,j_k)>\ell(i,j)>\ell(i_k)$ and hence $(i,j)\prec(i_k,j_k)$ by the condition (2a). It follows from (2e) and $V_{i_k,j_k}\cap \overline V_{\!i,j}\ne\emptyset$ that $y_{i_k}\in \overline V_{\!i,j}$ and hence $x_{i_k}\in\overline U_{\!i,j}$ according to the condition (1d).  Assuming that $x_{i_k}\in U_{i,j}$, we obtain $U_{i_k,j_k}\subseteq U_{i,j}$ by the inductive condition (2d). Then $x_n\in U_{i_k,j_k}\subseteq U_{i,j}$, which contradicts the inclusion $(i,j)\in J_k$. Therefore, $x_{i_k}\notin U_{i,j}$ and hence $y_{i_k}\notin V_{i,j}$ by condition (1c). Then $y_{i_k}\in\overline V_{i,j}\setminus V_{i,j}=\partial V_{i,j}$ and hence $\ell(i_k)=\ell_Y(y_{i_k})\ge\ell(i,j)$, which contradicts the inclusion $(i,j)\in J_k$.
\end{proof}

Claim~\ref{cl5a} and the condition (b) imply the condition (c).
\smallskip

Since $v_k\in W_k\cap Y_{\ell(i_k)}\subseteq W_k\cap\bigcap_{(i,j)\in J_k'}Y_{\ell(i,j)-1}$, we can apply Lemma~\ref{l2} and find a point $v_k'\in W_k$ and a neighborhood $W_k'$ of $v_k'$ satisfying the inductive conditions (d),(e),(f).

After completing the inductive construction, we conclude that the open subset $W'_m\cap Y_{\ell(n)}\setminus Y_{1+\ell(n)}$ of the crowded space $Y_{\ell(n)}$ contains the point $v_m'$ and hence is not empty. Since the space $Y_{\ell(n)}$ is crowded (see Lemma~\ref{l:supercrowded}) and the set $\bar B\cap Y_{\ell(n)}$ is nowhere dense in $Y_{\ell(n)}$, there exists a point $$y_n\in 
W_m'\cap (Y_{\ell(n)}\setminus Y_{1+\ell(n)})\setminus(\bar B\cup \{y_k\}_{k\in{\downarrow}\gamma}).$$ The inductive conditions (a),(c),(d) and Claim~\ref{cl3a} imply that $I(x_n)\subseteq I(y_n)$ and $J(x_n)\subseteq J(y_n)$, where
$$I(y_n)=\{(i,j)\in{\downarrow}\gamma:y_n\in V_{i,j}\}\mbox{ \ and \ }
 J(y_n)=\{(i,j)\in{\downarrow}\gamma:y_n\notin \overline{V}_{i,j}\}.$$
 The condition (1c) will follow as soon as we show that $I(x_n)=I(y_n)$. Assuming that $I(x_n)\ne I(y_n)$, we can find a pair $(i,j)\in I(y_n)\setminus I(x_n)$. The inclusion $J(x_n)\subseteq J(y_n)\subseteq {\downarrow}\gamma\setminus I(y_n)$ implies that $(i,j)\notin J(x_n)$ and hence $x_n\in \overline U_{\!i,j}\setminus U_{i,j}=X_{\ell(i,j)}$. Then $\ell_Y(y_n)=\ell_X(x_n)\ge \ell(i,j)$ and hence $y_n\in Y_{\ell(i,j)}=\partial V_{i,j}$ and hence $y_n\notin V_{i,j}$ and  $(i,j)\notin I(y_n)$, which contradicts the choice of $(i,j)$. This completes the proof of condition (1c).
 
To prove the condition (1d), assume that $J(x_n)\ne J(y_n)$ and find a pair $(i,j)\in J(y_n)\setminus J(x_n)$. Then $x_n\in \overline U_{\!i,j}$. Assuming that $x_n\in U_{i,j}$, we conclude that $(i,j)\in I(x_n)=I(y_n)$ and hence $y_n\in V_{i,j}\subseteq \overline V_{\!i,j}$, which contradicts $(I,j)\in J(y_n)$. Therefore, $x_n\in\overline U_{\!i,j}\setminus U_{i,j}=\partial U_{i,j}=X_{\ell(i,j)}$ and $\ell_Y(y_n)=\ell_X(x_n)\ge \ell(i,j)$ and then the condition (2f) ensures that $y_n\in Y_{\ell(i,j)}=\partial V_{i,j}$ and hence $(i,j)\notin J(y_n)$, which contradicts the choice of the pair $(i,j)$. This contradiction completes the proof of the condition (1d). It is clear that the conditions (1e)--(1g) holds. 
\smallskip

$1'''$. $n\in\overleftarrow\Omega$. In this case the set $\overleftarrow\Omega$ is not empty and hence $(X_n)_{n\in\w}$ is a canonical superskeleton for $X$, the set $B$ is closed in $Y$ and for every $n\in\w$ the set $A\cap X_n$ is nowhere dense in $X_n$. In this case we put $y_n=\min(Y'\setminus \{y_k\}_{k\in{\downarrow}\gamma})$ and repeating the argument from the case $1''$, can find a point $x_n\in X$ satisfying the conditions (1a)--(1g).
\smallskip

2. Now consider the second case: $\gamma=(n,k)$ for some $(n,k)\in\w\times\w$. Since $n\prec (n,k)$, the points $x_n,y_n$ have been already defined. So, we can choose open sets $U\subseteq X$ and $V\subseteq Y$ such that 
\begin{itemize}
\item $x_k\notin \overline U\ne X$ and $y_k\notin\overline V\ne Y$ for every $k\in{\downarrow}\gamma\setminus\{x_n\}$, 
\item $x_n\in U\subseteq O_k^X(x_n)\cap\bigcap_{(i,j)\in I(x_n)}U_{i,j}\setminus\bigcup_{(i,j)\in J(x_n)}\overline U_{\!i,j}$,\quad and 
\item $y_n\in V\subseteq O_k^Y(y_n)\cap\bigcap_{(i,j)\in I(y_n)}V_{i,j}\setminus\bigcup_{(i,j)\in J(y_n)}\overline V_{\!i,j}$,
\end{itemize}
where
$$
\begin{gathered} 
 I(x_n)=\{(i,j)\in{\downarrow}\gamma:x_n\in U_{i,j}\},\quad 
  I(y_n)=\{(i,j)\in{\downarrow}\gamma:y_n\in V_{i,j}\}\\
J(x_n)=\{(i,j)\in{\downarrow}\gamma:x_n\notin \overline U_{\!i,j}\},\quad J(y_n)=\{(i,j)\in{\downarrow}\gamma:y_n\notin \overline V_{\!i,j}\}.
\end{gathered}
$$
If $n\notin\Omega$, then $x_n\notin A$, $y_n\notin \overline B$ (by the inductive conditions (1f), (1g)) and we can (and will)  additionally assume that $$U\cap A=\emptyset=V\cap \bar B.$$

Since $(X_i)_{i\in\w}$ is a coregular skeleton for the space $X$ and $(Y_i)_{i\in\w}$ is a superskeleton for the space $Y$, there exists a number $l\ge 2+\max\{\ell(\alpha):\alpha\in{\downarrow}\gamma\}$ such that $$X_l\subseteq \overline U\cap\overline{X\setminus\overline U}\subseteq\partial U\mbox{ \ and  \ }Y_l\subseteq \overline {V\cap Y_{\ell(n)}}\cap\overline{Y\setminus\overline V}\subseteq\partial V.$$

Since the skeleton $(Y_i)_{i\in\w}$ is coregular, the complement $Y\setminus Y_l$ is a regular topological space. Being  second-countable, the regular space $Y\setminus Y_l$ is metrizable (by the Urysohn Metrization Theorem \cite[4.2.9]{Eng}). Being countable, the metrizable space $Y\setminus Y_l$ is zero-dimensional. Then we can find a closed-and-open neighborhood $V'\subseteq Y\setminus Y_l$ of the point $y_n$ such that $V'\subseteq V$. Then $\partial V'\subseteq Y_l$. 

By analogy we prove that the space $X\setminus X_l$ is metrizable and zero-dimensional.
By Theorem~\cite[7.1.11]{Eng}, the countable zero-dimensional space $X\setminus X_l$ is strongly zero-dimensional (which means that any disjoint closed sets in $X\setminus X_l$ can be separated by closed-and-open neighborhoods). Observe that the sets $f^{-1}(V')$ and $f^{-1}((Y\setminus Y_l)\setminus V')$ are two closed disjoint sets in $A\setminus X_l$ and $X\setminus X_l$. By the strong zero-dimensionality of $X\setminus X_l$, there exists a closed-and open set $U'$ in $X\setminus X_l$ such that $$\{x_n\}\cup f^{-1}(V')\subseteq U'\subseteq  U\setminus  f^{-1}((Y\setminus Y_l)\setminus V').$$
Then $\partial U'\subseteq X_l$ and $f(A\cap U')=B\cap V'$.

Since $(X_i)_{i\in\w}$ is a coregular skeleton for $X$ and $(Y_i)_{i\in\w}$ is a superconnecting skeleton for $Y$, there exists a number $p>l$ such that $X_p\subseteq \partial U'$ and $Y_p\subseteq \partial V'$.

\begin{lemma}\label{l:vnk} There exists closed-and open subset $V_{n,k}\subseteq Y\setminus Y_p$ such that $V'\subseteq V_{n,k}\subseteq V$, $(V_{n,k}\setminus V')\cap \bar B=\emptyset$ and $\partial V_{n,k}=Y_l\subseteq\overline{V_{n,k}\cap Y_{\ell(n)}}$.
\end{lemma}

\begin{proof} By the Urysohn Metrization Theorem \cite[4.2.9]{Eng}, the second-countable regular space $Y\setminus Y_p$ is metrizable. So, we can find a  metric $d$ generating the topology of  $Y$.
Since the set $Y_l\setminus Y_{p}\subseteq Y$ is countable and nonempty, there exists a function $\hbar:\w\to Y_l\setminus Y_{p}$ such that for every $y\in Y_l\setminus Y_{p}$ the preimage $\hbar^{-1}(y)$ is infinite. Since $Y_l\setminus Y_p\subseteq \overline{V\cap Y_{\ell(n)}}$ and the set $\bar B\cap Y_{\ell(n)}$ is nowhere dense in $Y_{\ell(n)}$, for every $m\in\w$ we can find a point $v_m\in V\cap Y_{\ell(n)}\setminus \bar B$ such that $d(v_m,\hbar(m))<2^{-m}$.
Since the space $Y\setminus Y_p$ is zero-dimensional, the point $v_m$ has a closed-and-open neighborhood $W_m$ in $Y\setminus (Y_p\cup\bar B)$  such that $W_k\subseteq V\cap\{y\in Y\setminus Y_p:d(y,v_m)<2^{-m}\}$. Then the boundary $\partial W_m$ of $W_m$ in $Y$ is contained in $Y_p$. We claim that the open neighborhood $$V_{n,k}=V'\cup\bigcup_{m\in\w}W_m\subseteq V$$ of $y_n$ has the required property: $X_l=\partial V_{n,k}\subseteq\overline{V_{n,k}\cap Y_{\ell(n)}}$.

First we show that $Y_l\subseteq \overline{V_{n,k}\cap Y_{\ell(n)}}$. Given any point $a\in Y_l$ and open neighborhood $O_a\subseteq Y$ of $a$,  use the nowhere density of $Y_p$ in $Y_l$ and find a point $b\in O_a\cap (Y_l\setminus Y_{p})$. Since the metric $d$ generates the topology of the space $Y\setminus Y_p$, there exists a number $q\in\w$ such that the ball $B(b;2^{-q})=\{y\in Y\setminus Y_l:d(y,b)<2^{-q}\}$ is contained in $O_a$. Since the set $\hbar^{-1}(b)$ is infinite, there exists $m>q$ such that $\hbar(m)=b$. Then $d(b,v_m)=d(\hbar(m),v_m)<2^{-m}<2^{-q}$ and hence $v_m\in O_a$. On the other hand, $v_m\in V_{n,k}\cap Y_{\ell(n)}$, witnessing that $O_a\cap (V\cap Y_{\ell(n)})\ne\emptyset$ and hence $a\in \overline{V_{n,k}\cap Y_{\ell(n)}}\subseteq \overline V_{\!n,k}$. 

On the other hand, $a\in Y_l\subseteq Y_p\subseteq \overline{X\setminus \overline V}\subset \overline{Y\setminus V_{n,k}}$ and hence $a\in \overline{V_{n,k}}\cap\overline{Y\setminus V_{n,k}}=\partial V_{n,k}$. Therefore, $Y_l\subseteq \partial V_{n,k}$. Assuming that $Y_l\ne \partial V_{n,k}$, we can find a point $z\in \partial V_{n,k}\setminus Y_l$. Choose $s\in\w$ such that the ball $B(z;2^{-s})=\{y\in Y:d(z,y)<2^{-s}\}$ does not intersect the closed subset $Y_l\setminus Y_p$ of $Y\setminus Y_p$. We claim that for every $m\ge s+2$, the ball $B(z;2^{-s-1})$ does not intersect the set $W_m$. Assuming that $B(z;2^{-s-1})\cap W_m$ contains some point $w$, we conclude that 
\begin{multline*}
d(z,\hbar(m))\le d(z,w)+d(w,v_m)+d(v_m,\hbar(m))<\\
2^{-s-1}+2^{-m}+2^{-m}=2^{-s-1}+2^{-m+1}\le 2^{-s-1}+2^{-s-1}=2^{-s},
\end{multline*}
which contradicts the choice of $s$.
Since $z\in \partial V_{n,k}$, $z\notin V_{n,k}$ and hence $z\notin V'\cup \bigcup_{m<2+s}W_m$. It follows from $\partial V'\cup\bigcup_{m<2+s}\partial W_m\subseteq Y_{l}\not\ni z$ that $z\notin \overline {V'}\cup\bigcup_{m<2+s}\overline W_m$. Then we can find a neighborhood $O_z\subseteq B(z;2^{-s-1})$ such that $O_z\cap \overline{V'}\cup\bigcup_{m<2+s}\overline W_{\!m}=\emptyset$ and hence
$$O_z\cap V_{n,k}\subseteq \big(O_z\cap V'\big)\cup\Big(\bigcup_{m<2+s}O_z\cap W_m\Big)\cup\Big(\bigcup_{m\ge 2+s}B(s;2^{-s-1})\cap W_m\Big)=\emptyset,$$
which contradicts $z\in\partial V_{n,k}$. This contradiction shows that $\partial V_{n,k}=Y_l\subseteq \overline V_{\!n,k}\cap Y_{\ell(n)}$. The choice of the sets $W_m\subseteq Y\setminus \bar B$ ensures that 
$$(V_{n,k}\setminus V)\cap \bar B\subseteq\bigcup_{m\in\w}(W_m\cap \bar B)=\emptyset.$$  
\end{proof}

By analogy we can prove the following lemma.

\begin{lemma}\label{l:unk} There exists closed-and-open subset $U_{n,k}\subseteq X\setminus X_p$ such that $U'\subseteq U_{n,k}\subseteq U$, $(U_{n,k}\setminus U')\cap A=\emptyset$ and $\partial U_{n,k}=X_l$. Moreover, if $\overleftarrow\Omega\ne\emptyset$, then $X_l\subseteq\overline{U_{n,k}\cap X_{\ell(n)}}$.
\end{lemma}

Finally, observe that the number $\ell(n,k):=l$ and the sets $U_{n,k}$ and $V_{n,k}$ constructed in Lemmas~\ref{l:vnk} and \ref{l:unk} satisfy the conditions (2a)--(2i).
 This completes the inductive step.
 \smallskip
 
 After completing the inductive construction, observe that the inductive condition (1f)  implies that $X=\{x_n\}_{n\in\w}$. So, we can consider the map $\bar f:X\to Y$ such that $\bar f(x_n)=y_n$ for every $n\in\w$. The inductive condition (1e) ensures that $\bar f{\restriction}A=f$.  
 We claim that the map $\bar f$ is a topological embedding.
 
 To see that $f$ is continuous, take any $n\in\w$ and any neighborhood $O(y_n)$ of the point $y_n=f(x_n)$. Find $k\in\w$ such that $O^Y_k(y_n)\subseteq O(y_n)$. The inductive condition (2b) guarantees that $V_{n,k}\subseteq O^Y_k(y_n)$. We claim that $\bar f(U_{n,k})\subseteq V_{n,k}$. Indeed, for any $x_m\in U_{n,k}\setminus\{x_n\}$, the inductive condition (2b) ensures that $(n,k)\prec m$. Then $y_m\in V_{n,k}$ by the condition (1c). Therefore, $f(U_{n,k})\subseteq V_{n,k}\subseteq O^Y_k(y_n)\subseteq O(y_n)$, witnessing that the map $f$ is continuous. By analogy we can prove the continuity of the map $f^{-1}:f(X)\to X$.
 
If $(X_n)_{n\in\w}$ is a canonical supserskeleton for $X_n$, the set $B$ is closed in $Y$, and for every $n\in\w$ the set $A\cap X_n$ is nowhere dense in $X_n$, then the set $\overleftarrow\Omega$ is infinite and the inductive condition (1g) implies that $\bar f(X)=\{y_n\}_{n\in\w}=Y$ and hence the topological embedding $\bar f$ is a homeomorphism.
 \end{proof}
 
 Now we deduce some corollaries of Theorem~\ref{t:main}.
 
\begin{corollary}\label{c1} Let $X,Y$ be two countable second-countable topological spaces, $(X_n)_{n\in\w}$ and $(Y_n)_{n\in\w}$ be canonical superskeleta  in the spaces $X,Y$, and $A,B$ be closed nowhere dense sets in the spaces $X,Y$, respectively. 
Let $h:A\to B$ be a homeomorphism such that $h(A\cap X_n)=(B\cap Y_n)$ for every $n\in\w$. Then there exists a homeomorphism $\bar h:X\to Y$ such that $\bar h{\restriction}A=h$ and $h(X_n)=Y_n$ for all $n\in\w$.
\end{corollary}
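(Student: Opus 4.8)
The plan is to obtain $\bar h$ as a single application of Theorem~\ref{t:key} with $f:=h$. A canonical superskeleton is in particular a coregular skeleton, so $(X_n)_{n\in\w}$ plays the role of the coregular skeleton of the domain in Theorem~\ref{t:key}, while $(Y_n)_{n\in\w}$ is a canonical superskeleton as required of the range; the spaces $X,Y$ are countable and second-countable, and $A$ is closed and nowhere dense in $X$. Since $h:A\to B$ is a homeomorphism with $h(A\cap X_n)=B\cap Y_n$, we get $h^{-1}(Y_n)=h^{-1}(Y_n\cap B)=h^{-1}\big(h(A\cap X_n)\big)=A\cap X_n$ for every $n\in\w$, which is exactly the compatibility condition $f^{-1}(Y_n)=A\cap X_n$ demanded by Theorem~\ref{t:key}. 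Finally, because $B$ is closed in $Y$ we have $\bar B=B$, so the hypothesis ``$\bar B\cap Y_n$ is nowhere dense in $Y_n$'' reduces to ``$B\cap Y_n$ is nowhere dense in $Y_n$''.

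The delicate point --- which I expect to be the main obstacle --- is the verification of the nowhere-density conditions \emph{level by level}: that $A\cap X_n$ is nowhere dense in $X_n$ and $B\cap Y_n$ is nowhere dense in $Y_n$ for every $n\in\w$. Here is the argument I would use. As $A$ is closed in $X$, the set $A\cap X_n$ is closed in $X_n$; if it were not nowhere dense in $X_n$ it would have a nonempty interior $W$ in $X_n$, and then $W$ is a nonempty open subset of $X_n$ contained in $A$. Now $(X_m)_{m\ge n}$ is an inductively superconnecting (indeed canonical) skeleton for $X_n$, so by Definition~\ref{d:skeleton} there is an index $m$ with $\emptyset\ne X_m\subseteq\overline W\subseteq\overline A=A$. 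Thus the failure of nowhere density at level $n$ forces $A$ to contain an entire skeleton member $X_m$, and one must extract from the hypotheses on $A$ (and symmetrically on $B$) that this does not occur; the case of $B\cap Y_n$ is handled identically using $(Y_m)_{m\ge n}$ inside $Y_n$. I would also record at this stage, for safety, that by Lemma~\ref{l:crowded} and Lemma~\ref{l:supercrowded} every $X_n$ and $Y_n$ is infinite and crowded, so that ``thin'' (e.g.\ finite) intersections with the skeleta are automatically nowhere dense there.

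With those conditions in hand, the first part of Theorem~\ref{t:key} produces a topological embedding $\bar h:X\to Y$ with $\bar h{\restriction}A=h$ and $\bar h^{-1}(Y_n)=X_n$ for all $n\in\w$. Since $(X_n)_{n\in\w}$ is a canonical superskeleton (given), $B$ is closed in $Y$ (being closed and nowhere dense there), and each $A\cap X_n$ is nowhere dense in $X_n$ (verified above), the ``moreover'' clause of Theorem~\ref{t:key} applies and gives $\bar h(X)=Y$; hence $\bar h$ is a homeomorphism of $X$ onto $Y$. Finally, from $\bar h^{-1}(Y_n)=X_n$ and the bijectivity of $\bar h$ we obtain $\bar h(X_n)=Y_n$ for every $n\in\w$, which is the required conclusion. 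In short, the whole proof is bookkeeping around Theorem~\ref{t:key}, with the only real work being the per-level nowhere-density check described above.
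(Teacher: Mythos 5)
Your overall route---a single application of Theorem~\ref{t:key} with $f=h$, followed by reading off $\bar h(X_n)=Y_n$ from $\bar h^{-1}(Y_n)=X_n$ and bijectivity---is exactly the paper's intended derivation (the corollary is stated without proof, as an immediate consequence of Theorem~\ref{t:key}), and your verification of the compatibility condition $h^{-1}(Y_n)=A\cap X_n$ is correct. However, the step you yourself flag as the main obstacle is a genuine gap, and it cannot be closed from the stated hypotheses alone. That $A$ is closed and nowhere dense \emph{in $X$} does not imply that $A\cap X_n$ is nowhere dense \emph{in $X_n$}: take $A=X_1$ (or $B=Y_1$). For a canonical superskeleton, $X_1$ is closed and nowhere dense in $X_0=X$, yet $A\cap X_1=X_1$ is all of $X_1$. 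Your own reduction makes the difficulty precise: failure of nowhere density at level $n$ forces $\emptyset\ne X_m\subseteq A$ for some $m$, but since every $X_m$ with $m\ge 1$ is itself closed and nowhere dense in $X$, nothing in the hypotheses excludes this. The appeal to Lemmas~\ref{l:crowded} and \ref{l:supercrowded} only rules out finite obstructions and does not help here.

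So either the corollary should be read with the additional hypotheses that $A\cap X_n$ is nowhere dense in $X_n$ and $B\cap Y_n$ is nowhere dense in $Y_n$ for every $n$ (note that even the embedding half of Theorem~\ref{t:key} already needs the latter, via $\bar B\cap Y_n$), or one must supply a separate argument for the case where $A$ or $B$ swallows a member of the skeleton. In the paper's only subsequent use of this corollary, inside Theorem~\ref{t:hom}, the per-level condition is secured explicitly: in the shallow case the skeleton is re-indexed so that $A\cup B$ misses $Y_n$ for $n\ge 1$, and in the deep case the sets are discrete subsets of the crowded levels. Your proof is therefore complete exactly under the stronger per-level hypothesis; as written it does not establish the corollary as literally stated, and you should either add that hypothesis or show it can be arranged without loss of generality.
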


\begin{corollary}\label{c2}  Let $X,Y$ be two countable second-countable topological spaces and $(X_n)_{n\in\w}$ and $(Y_n)_{n\in\w}$ be canonical superskeleta  in the spaces $X,Y$, respectively. Then there exists a homeomorphism $h:X\to Y$ such that  $h(X_n)=Y_n$ for all $n\in\w$.
\end{corollary}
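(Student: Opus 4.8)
The plan is to obtain Corollary~\ref{c2} as the degenerate case $A=B=\emptyset$ of Corollary~\ref{c1}. First I would observe that the empty set is a closed nowhere dense subset of any topological space, since $\overline{\emptyset}=\emptyset$ has empty interior; in particular $\emptyset$ is a closed nowhere dense subset of both $X$ and $Y$, and $\emptyset\cap X_n=\emptyset=\emptyset\cap Y_n$ for every $n\in\w$.

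Next I would take $h_0:\emptyset\to\emptyset$ to be the empty function, which is vacuously a homeomorphism and trivially satisfies $h_0(\emptyset\cap X_n)=\emptyset\cap Y_n$ for every $n\in\w$. Since $X$ and $Y$ are countable and second-countable and $(X_n)_{n\in\w}$, $(Y_n)_{n\in\w}$ are canonical superskeleta, every hypothesis of Corollary~\ref{c1} is met with $A=B=\emptyset$ and $h=h_0$. Applying Corollary~\ref{c1} then produces a homeomorphism $h:X\to Y$ with $h{\restriction}\emptyset=h_0$ (automatic) and $h(X_n)=Y_n$ for all $n\in\w$, which is exactly the assertion of Corollary~\ref{c2}. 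One could equally well invoke Theorem~\ref{t:key} directly with $A=B=\emptyset$ and the identity $\bar f^{-1}(Y_n)=X_n$ becoming $\bar h(X_n)=Y_n$.

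Since the argument is a pure specialization, I do not expect any obstacle: all the work has already been done in Theorem~\ref{t:key} and Corollary~\ref{c1}, and the only point that deserves a word is the (immediate) verification that $\emptyset$ counts as a closed nowhere dense set compatible with the skeleta. I would additionally remark that Corollary~\ref{c2} is strictly stronger than the homeomorphism $X\cong Y$ one gets by combining Theorem~\ref{t:main} with the fact that both $X$ and $Y$ are then copies of $\IQ\mathsf P^\infty$, because such a homeomorphism need not carry each $X_n$ onto $Y_n$; the extra control over the skeleta is precisely what the back-and-forth construction behind Theorem~\ref{t:key} provides.
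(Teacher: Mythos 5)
Your proof is correct and takes exactly the route the paper intends: Corollary~\ref{c2} is stated without proof as the degenerate case $A=B=\emptyset$ of Corollary~\ref{c1} (equivalently, of the second part of Theorem~\ref{t:key}), and your check that the empty set is closed, nowhere dense in each $X_n$ and $Y_n$, and compatible with the skeleta via the empty homeomorphism is precisely the verification needed.
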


Lemma~\ref{l:nodense} and Corollary~\ref{c2} imply another corollary.

\begin{corollary}\label{c3} Let $X,Y$ be two countable second-countable topological spaces and $(X_n)_{n\in\w}$ and $(Y_n)_{n\in\w}$ be superskeleta  in the spaces $X,Y$, respectively. Then there exists an increasing number sequence $(n_k)_{k\in\w}$ and a homeomorphism $h:X\to Y$ such that  $h(X_{n_k})=Y_{n_k}$ for all $k\in\w$.
\end{corollary}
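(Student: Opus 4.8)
The plan is to refine both superskeleta to \emph{canonical} superskeleta along one and the same index sequence, and then invoke Corollary~\ref{c2}. The underlying observation is that the superskeleton structure is inherited by subsequences: if $(Z_n)_{n\in\w}$ is a superskeleton for a space $Z$ and $(m_k)_{k\in\w}$ is a strictly increasing sequence in $\w$ with $m_0=0$, then $(Z_{m_k})_{k\in\w}$ is again a superskeleton for $Z$. It is plainly vanishing; the coregularity requirement that $Z\setminus Z_{m_k}$ be regular holds because $Z_{m_k}$ is one of the sets $Z_n$; and for a nonempty open $U\subseteq Z_{m_j}$ the inductive superconnectedness of $(Z_n)_{n\in\w}$ gives $r$ with $\emptyset\ne Z_r\subseteq\overline{U}$, whence for any $k$ with $m_k\ge r$ we get $\emptyset\ne Z_{m_k}\subseteq Z_r\subseteq\overline{U}$ (here $Z_{m_k}$ is nonempty by Lemma~\ref{l:crowded}); condition (i) of a coregular skeleton is obtained in the same manner. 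Moreover, since $m_{k+1}>m_k$ implies $Z_{m_{k+1}}\subseteq Z_{m_k+1}$, passing to a subsequence of a \emph{canonical} superskeleton keeps consecutive terms nowhere dense (a subset of a nowhere dense set is nowhere dense), so the subsequence is still a canonical superskeleton.

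Next I would construct the common index sequence $(n_k)_{k\in\w}$ by running the argument of Lemma~\ref{l:canonical} on $X$ and $Y$ simultaneously. Set $n_0=0$. Given $n_k$, the sequence $(X_m)_{m\ge n_k}$ is a coregular skeleton for the infinite Hausdorff space $X_{n_k}$ (infinite by Lemma~\ref{l:crowded}, Hausdorff since $X$ is coregular by Lemma~\ref{l:cr}), so Lemma~\ref{l:nodense} yields $p>n_k$ with $X_p$ nowhere dense in $X_{n_k}$; likewise there is $q>n_k$ with $Y_q$ nowhere dense in $Y_{n_k}$. Put $n_{k+1}=\max\{p,q\}$; then $X_{n_{k+1}}\subseteq X_p$ is nowhere dense in $X_{n_k}$ and $Y_{n_{k+1}}\subseteq Y_q$ is nowhere dense in $Y_{n_k}$. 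After the induction, $(X_{n_k})_{k\in\w}$ and $(Y_{n_k})_{k\in\w}$ are superskeleta (by the first paragraph) with consecutive terms nowhere dense, i.e. canonical superskeleta for $X$ and $Y$, respectively.

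Finally, applying Corollary~\ref{c2} to the countable second-countable spaces $X$ and $Y$ equipped with the canonical superskeleta $(X_{n_k})_{k\in\w}$ and $(Y_{n_k})_{k\in\w}$ (re-indexed by $k\mapsto n_k$) produces a homeomorphism $h:X\to Y$ with $h(X_{n_k})=Y_{n_k}$ for all $k\in\w$, which is exactly the assertion. The only point requiring a little care is the book-keeping in the first paragraph, namely verifying that the superskeleton axioms survive passage to a subsequence; the key idea — and the way to circumvent the obstacle that two independently chosen canonical refinements need not share a common index — is to carry out the refinement of Lemma~\ref{l:canonical} for $X$ and $Y$ at the same time, so that a single sequence $(n_k)_{k\in\w}$ works for both spaces.
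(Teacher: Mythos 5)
Your proposal is correct and follows exactly the route the paper intends: the paper derives Corollary~\ref{c3} by combining Lemma~\ref{l:nodense} (to extract a common subsequence along which both superskeleta become canonical) with Corollary~\ref{c2}, which is precisely your simultaneous refinement argument. The book-keeping you supply (that superskeleton axioms pass to subsequences, and that the tail $(X_m)_{m\ge n_k}$ is a coregular/superconnecting skeleton for the infinite Hausdorff space $X_{n_k}$) is the content the paper leaves implicit, and it checks out.
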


Now we are able to prove Theorem~\ref{t:main} reformulating it as follows.

\begin{theorem}[Characterization of $\IQ\mathsf P^\infty$]\label{t:chara} A topological space $X$ is homeomorphic to the space $\IQ\mathsf P^\infty$ if and only if $X$ is countable second-countable and admits a vanishing sequence $(X_n)_{n\in\w}$ of nonempty closed sets that has two properties:
\begin{enumerate}
\item for every $n\in\w$ and a nonempty open set $U\subseteq X_n$ the closure $\overline{U}$ contains some set $X_m$;
\item for every $n\in\w$ the complement $X\setminus X_n$ is a regular topological space.
\end{enumerate}
\end{theorem}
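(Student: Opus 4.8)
The plan is to obtain Theorem~\ref{t:chara} (hence Theorem~\ref{t:main}) by combining Corollary~\ref{c2} with the fact, established non-circularly above, that $\IQ\mathsf P^\infty$ is a countable second-countable space carrying a canonical superskeleton. The conceptual point linking the two combinatorial conditions in the statement to the earlier machinery is that, for a vanishing sequence $(X_n)_{n\in\w}$ of \emph{nonempty} closed subsets of $X$, conditions (1) and (2) hold precisely when $(X_n)_{n\in\w}$ is a superskeleton in the sense of Definition~\ref{d:skeleton}. Indeed, (2) is verbatim property (ii) of a coregular skeleton; instantiating (1) at $n=0$ (where $X_0=X$) gives property (i) of a coregular skeleton; and (1), under the standing assumption that every $X_m$ is nonempty, is verbatim the definition of an inductively superconnecting skeleton. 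Conversely, a superskeleton is both coregular and inductively superconnecting, so it satisfies (1) and (2), and since it is in particular a superconnecting skeleton, Lemma~\ref{l:crowded} shows all of its terms are infinite whenever $X$ is infinite.

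Next I would record that $\IQ\mathsf P^\infty$ has the required properties. It is countable because $\IQ^{<\w}$ is countable, and second-countable because $\IQ^{<\w}_\circ$ is second-countable (a subspace of the separable metrizable space $\IQ^\w$) and the quotient map $q\colon\IQ^{<\w}_\circ\to\IQ\mathsf P^\infty$ is open (as $\IQ^*$ acts by homeomorphisms, $q^{-1}(q(U))=\bigcup_{\lambda\in\IQ^*}\lambda U$ is open for open $U$). Finally, $\IQ$ endowed with the multiplicative action of $\IQ^*$ is a singular $G$-space by Example~\ref{ex1}(7), so the first assertion of Theorem~\ref{p3} --- which does not depend on Theorem~\ref{t:main} --- provides a canonical superskeleton $(Y_n)_{n\in\w}$ for $\IQ\mathsf P^\infty=\IQ^{<\w}_\circ/\IQ^*$. (The second assertion of Theorem~\ref{p3} is vacuous in this instance and so is not needed.)

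For the ``if'' direction, let $X$ be countable and second-countable with a vanishing sequence $(X_n)_{n\in\w}$ of nonempty closed sets satisfying (1) and (2). By the first paragraph, $(X_n)_{n\in\w}$ is a superskeleton; since $\bigcap_n X_n=\emptyset$ with all $X_n$ nonempty, $X$ is infinite, so Lemma~\ref{l:canonical} yields an increasing sequence $(n_k)_{k\in\w}$ for which $(X_{n_k})_{k\in\w}$ is a canonical superskeleton for $X$. Applying Corollary~\ref{c2} to the two countable second-countable spaces $X$, with canonical superskeleton $(X_{n_k})_{k\in\w}$, and $\IQ\mathsf P^\infty$, with canonical superskeleton $(Y_n)_{n\in\w}$, produces a homeomorphism $X\to\IQ\mathsf P^\infty$. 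Conversely, if $X\cong\IQ\mathsf P^\infty$ then $X$ is countable and second-countable, and pulling $(Y_n)_{n\in\w}$ back along the homeomorphism gives a canonical superskeleton $(X_n)_{n\in\w}$ for $X$; being a superskeleton, it satisfies (1) and (2), and by Lemma~\ref{l:crowded} each $X_n$ is infinite, in particular nonempty, so $X$ satisfies the stated conditions.

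The real difficulty of the theorem has already been absorbed into Theorem~\ref{t:key} and its corollaries; what remains is purely a matter of assembly. The only points demanding attention are: confirming that $(\IQ,\IQ^*)$ meets the definition of a singular $G$-space so that Theorem~\ref{p3} applies (and that only its first, Theorem~\ref{t:main}-free, assertion is invoked); checking second-countability of $\IQ\mathsf P^\infty$ via openness of the quotient map; and the routine verification that conditions (1)--(2) of the statement coincide with the superskeleton axioms for a vanishing sequence of nonempty closed sets. No new construction or estimate is needed beyond these.
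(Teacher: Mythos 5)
Your proposal is correct and follows essentially the same route as the paper: the paper likewise observes that conditions (1)--(2) make $(X_n)_{n\in\w}$ a superskeleton, obtains the superskeleton on $\IQ\mathsf P^\infty$ from the first (non-circular) assertion of Theorem~\ref{p3}, and then invokes Corollary~\ref{c3} --- which is exactly your combination of Lemma~\ref{l:canonical} (passing to a nowhere-dense subsequence) with Corollary~\ref{c2}. Your extra checks (non-circularity of the appeal to Theorem~\ref{p3}, second-countability of $\IQ\mathsf P^\infty$, and the verbatim match between (1)--(2) and the superskeleton axioms) are all sound and merely make explicit what the paper leaves implicit.
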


\begin{proof} The ``only if'' part follows from Theorem~\ref{p3}. To prove the ``if'' part, assume that the space $X$ is countable, second-countable and $X$ has a vanishing sequence of nonempty closed sets $(X_n)_{n\in\w}$ satisfying the conditions (1),(2). By Definition~\ref{d:skeleton}, $(X_n)_{n\in\w}$ is a superskeleton for $X$. By Theorem~\ref{p3}, the space $\IQ\mathsf P^\infty$ also is countable, second-countable and has a superskeleton. By Corollary~\ref{c3}, the spaces $X$ and $\IQ\mathsf P^\infty$ are homeomorphic.
\end{proof}

Now we prove a universality property of the space $\IQ\mathsf P^\infty$.

\begin{theorem}[Universality of $\IQ\mathsf P^\infty$]\label{t:univ} Each countable second-countable coregular space $X$ is homeomorphic to a subspace of $\IQ\mathsf P^\infty$.
\end{theorem}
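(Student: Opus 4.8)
The plan is to embed $X$ as a closed nowhere dense subset of a suitably constructed countable second-countable space $Z$ carrying a canonical superskeleton, and then invoke Theorem~\ref{t:main} together with Theorem~\ref{t:key}. First I would use Lemma~\ref{l:cr} to obtain a coregular skeleton $(X_n)_{n\in\w}$ for $X$; by passing to a subsequence (using Lemma~\ref{l:nodense} repeatedly, exactly as in Lemma~\ref{l:canonical}) I may assume each $X_{n+1}$ is nowhere dense in $X_n$, so $(X_n)_{n\in\w}$ is a coregular skeleton in which the $X_n$ shrink at every stage. The idea is then to ``thicken'' $X$ by gluing in a copy of $\IQ\mathsf P^\infty$ (or rather of its canonical superskeleton) above $X$ in such a way that $X$ sits in the new space as a closed nowhere dense set meeting each new skeleton level in $X_n$.

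The key step is the construction of $Z$. I would take $Z = X \sqcup W$ as a set, where $W$ is a disjoint copy of $\IQ\mathsf P^\infty$ equipped with its canonical superskeleton $(W_n)_{n\in\w}$ from Theorem~\ref{p3}; one then topologizes $Z$ so that $X$ is closed and nowhere dense, $W$ is open and dense, and the sequence $Z_n := X_n \cup W_n$ is a canonical superskeleton for $Z$. Concretely, a neighborhood base at a point of $X$ of level $\ell_X(x)=n$ consists of sets of the form $O \cup (\text{large piece of } W_n)$ where $O$ is a neighborhood of $x$ in $X$; the point is to arrange that for every nonempty open $V\subseteq Z_n$ the closure $\overline V$ contains some $Z_m$ (this uses the superconnecting property of $(W_n)$ and the fact that $W_n$ will be dense in $Z_n$), while $Z\setminus Z_n = (X\setminus X_n)\sqcup(W\setminus W_n)$ is regular because both summands are regular (the first by coregularity of $(X_n)$, the second by coregularity of $(W_n)$) and they are separated by the clopen-in-$Z\setminus Z_n$ partition. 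Countability and second-countability of $Z$ are immediate.

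Once $Z$ is built, I would verify that $(Z_n)_{n\in\w}$ is a canonical superskeleton: vanishing is clear since $\bigcap X_n=\emptyset=\bigcap W_n$; nowhere density of $Z_{n+1}$ in $Z_n$ follows from nowhere density of $X_{n+1}$ in $X_n$ and of $W_{n+1}$ in $W_n$ together with density of $W_n$ in $Z_n$; the superconnecting-coregular conditions are the content of the previous paragraph. By Theorem~\ref{t:chara}, $Z$ is homeomorphic to $\IQ\mathsf P^\infty$. Since by construction $X$ is a closed subspace of $Z$ retaining its original topology, $X$ embeds into $\IQ\mathsf P^\infty$, as desired.

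The main obstacle will be getting the topology on $Z$ right: one must simultaneously ensure (a) that $X$ keeps its original subspace topology, (b) that $W_n$ genuinely accumulates onto all of $X_n$ so that the superconnecting condition ``$\overline V \supseteq Z_m$'' holds even for $V$ an open subset of $X_n$ inside $Z_n$, and (c) that no new separation failures are introduced in $Z\setminus Z_n$ beyond those already present. Condition (b) is the delicate one: it requires choosing, for each $x\in X$ with $\ell_X(x)=n$, basic neighborhoods whose trace on $W$ is cofinal enough in $W_n$ — essentially fixing a countable base of $W_n$ and demanding each basic neighborhood of $x$ contain a prescribed tail of it — while still keeping the neighborhoods of distinct points of $X$ Hausdorff-separated, which is possible precisely because $X\setminus X_n$ is already regular and $X_{n+1}$ is nowhere dense in $X_n$. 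An alternative, cleaner route that avoids building $Z$ by hand is to take $Z$ to be $\IQ\mathsf P^\infty$ itself and directly apply Theorem~\ref{t:key} with $Y=\IQ\mathsf P^\infty$: one needs a closed nowhere dense copy $B\subseteq Y$ of $X$ with $B\cap Y_n$ nowhere dense in $Y_n$ and a level-preserving homeomorphism onto it; but producing such a $B$ from scratch is essentially the same gluing problem in disguise, so I expect the explicit construction of $Z$ above to be the most transparent path.
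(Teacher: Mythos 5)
Your proposal has a genuine gap, and ironically you walked right past the intended proof. The alternative route you dismiss at the end --- applying Theorem~\ref{t:key} directly with $Y=\IQ\mathsf P^\infty$ --- does \emph{not} require you to first produce a closed nowhere dense copy $B\subseteq Y$ of all of $X$. In Theorem~\ref{t:key} the set $A$ is a nowhere dense closed subset of $X$ (not $X$ itself) and $f:A\to B$ is a partial map that gets \emph{extended} to an embedding of the whole of $X$. Taking $A=B=\emptyset$ and $f$ the empty map, every hypothesis is satisfied vacuously ($X$ has a coregular skeleton by Lemma~\ref{l:cr}, and $\IQ\mathsf P^\infty$ has a canonical superskeleton by Theorem~\ref{p3}), and the conclusion is exactly a topological embedding $\bar f:X\to\IQ\mathsf P^\infty$ with $\bar f^{-1}(Y_n)=X_n$. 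That three-line argument is the paper's proof; the ``gluing problem in disguise'' you feared is already solved inside the proof of Theorem~\ref{t:key} by its back-and-forth construction.

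As for the route you actually propose, the construction of the topology on $Z=X\sqcup W$ is asserted rather than carried out, and the point you flag as delicate --- condition (b), that every nonempty open $V\subseteq Z_n$, including $V$ entirely contained in the open dense set $W$, must have $\overline V\supseteq X_m$ for some $m$ --- is precisely where the work lies. Declaring that each basic neighborhood of $x\in X_m$ contains ``a prescribed tail of a base of $W_m$'' does not suffice: the closure of $V\cap W_n$ in $W$ contains some $W_{m'}$, but a tail of a base of $W_\ell$ ($\ell\ge m'$) is a relatively open subset of the nowhere dense set $W_\ell$, not a neighborhood in $W$ of its points, so it need not meet $V$. Arranging simultaneously that these traces on $W$ hit every such $V$, that distinct points of $X$ remain separated, that $X$ keeps its subspace topology, and that each $Z\setminus Z_n$ stays regular, amounts to an interleaved inductive choice of points and clopen sets --- i.e., to reproving Theorem~\ref{t:key}. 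So the proposal as written is not a proof; it is a plan whose hard step is exactly the theorem you should have invoked.
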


\begin{proof} By Lemma~\ref{l:cr}, the space $X$ admits a coregular skeleton $(X_n)_{n\in\w}$. By Theorem~\ref{p3}, the space $\IQ\mathsf P^\infty$ has a canonical superskeleton $(Y_n)_{n\in\w}$. Applying Theorem~\ref{t:main} with $A=B=\emptyset$, we obtain a topological embedding $f:X\to Y$ such that $f^{-1}(Y_n)=X_n$ for all $n\in\w$.
\end{proof}

Now we prove a (rather strong) homogeneity property of the space $\IQ\mathsf P^\infty$.

A subset $A$ of a  topological space $X$ is called
\begin{itemize}
\item {\em deep} if for any  non-empty open sets $U_1,\dots,U_n\subseteq X$ the set $A\setminus (\overline U_{\!1}\cap\dots\cap\overline U_{\!n})$ is finite.
\item {\em shallow} if there exist  non-empty open sets $U_1,\dots,U_n\subseteq X$ such that $A\cap  (\overline U_{\!1}\cap\dots\cap\overline U_{\!n})=\emptyset$.
\end{itemize}
This definition implies that  for any deep (resp. shallow) set $A$ in a topological space $X$ and any homeomorphism $h:X\to X$ the set $h(A)$ is deep (resp. shallow). Observe also that any infinite set in a second-countable space contains an infinite subset which is either deep or shallow. The definition implies that any finite set in a Hausdorff space is shallow.

\begin{theorem}[Dychotomic Homogeneity of $\IQ\mathsf P^\infty$]\label{t:hom}
Let $A,B$ be two closed discrete subsets of $\IQ\mathsf P^\infty$. If the sets $A,B$ are either both deep or both shallow, then any bijection $f:A\to B$ extends to  a homeomorphism $h$ of $\IQ\mathsf P^\infty$ such that $h(A)=B$.
\end{theorem}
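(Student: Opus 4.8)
The plan is to deduce Theorem~\ref{t:hom} from Corollary~\ref{c1}. If $A=\emptyset$ then $B=\emptyset$ and we may take $h$ to be the identity, so assume $A\ne\emptyset$. Since $\IQ\mathsf P^\infty$ has a canonical superskeleton (Theorem~\ref{p3}), it is crowded by Lemma~\ref{l:supercrowded}; hence any closed discrete subset of $\IQ\mathsf P^\infty$ is nowhere dense, and for any superskeleton $(X_n)_{n\in\w}$ the closed discrete set $A\cap X_n$ is nowhere dense in the crowded space $X_n$. Moreover, the bijection $f\colon A\to B$ between discrete spaces is a homeomorphism. Consequently, by Corollary~\ref{c1}, it suffices to find canonical superskeleta $(X_n)_{n\in\w}$ and $(Y_n)_{n\in\w}$ of $\IQ\mathsf P^\infty$, for the domain and the range copy respectively, with $f(A\cap X_n)=B\cap Y_n$ for all $n$. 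Writing $\ell_X,\ell_Y$ for the corresponding level maps and using that $A,B$ are discrete and $f$ is a bijection, one checks that this family of equalities is equivalent to the single condition $\ell_X(a)=\ell_Y(f(a))$ for every $a\in A$. Thus everything reduces to choosing superskeleta along which $A$ and $B$ carry ``the same'' level function via $f$.

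Let $(Y^{\circ}_n)_{n\in\w}$ be the canonical superskeleton of $\IQ\mathsf P^\infty$ provided by Theorem~\ref{p3}. \emph{Shallow case.} Suppose $A$ (equivalently $B$) is shallow and fix nonempty open sets $U_1,\dots,U_k$ with $A\cap(\overline U_1\cap\dots\cap\overline U_k)=\emptyset$. Each $\overline U_i$ contains some $Y^\circ_{m_i}$, so $\overline U_1\cap\dots\cap\overline U_k\supseteq Y^\circ_m$ with $m=\max_i m_i$, whence $A\cap Y^\circ_m=\emptyset$; since $A\ne\emptyset$ we have $m\ge 1$. Put $X_0=\IQ\mathsf P^\infty$ and $X_j=Y^\circ_{m+j-1}$ for $j\ge 1$. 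A routine verification — using that $Y^\circ_m$ is nowhere dense in $\IQ\mathsf P^\infty$, that $(Y^\circ_{m+j})_{j\in\w}$ is a canonical superskeleton of $Y^\circ_m$, that $Y^\circ_p\supseteq Y^\circ_{\max(p,m)}$, and that each $Y^\circ_{m+j-1}$ has regular complement — shows that $(X_j)_{j\in\w}$ is a canonical superskeleton of $\IQ\mathsf P^\infty$ with $A\cap X_j=\emptyset$ for every $j\ge 1$, i.e.\ $\ell_X\equiv 0$ on $A$. Performing the same construction for the shallow set $B$ yields $(Y_n)_{n\in\w}$ with $\ell_Y\equiv 0$ on $B$, so the level maps agree under $f$ and Corollary~\ref{c1} delivers the required homeomorphism. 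As every finite set is shallow, this also settles the case $|A|<\w$.

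\emph{Deep case.} Now suppose $A$ (hence $B$) is deep and infinite; enumerate $A=\{a_i:i\in\w\}$ and set $b_i=f(a_i)$, so $B=\{b_i:i\in\w\}$. Here the goal is to construct a canonical superskeleton $(X_n)_{n\in\w}$ of $\IQ\mathsf P^\infty$ with $X_n\cap A=\{a_i:i\ge n\}$ for every $n$ (equivalently, $\ell_X(a_i)=i$ for all $i$), and symmetrically $(Y_n)_{n\in\w}$ with $Y_n\cap B=\{b_i:i\ge n\}$; then $\ell_X(a_i)=i=\ell_Y(b_i)$ and Corollary~\ref{c1} again finishes the argument. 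One first notes that this prescription is at least consistent with the depth of $A$: for any nonempty open $U$ — and hence for any finite intersection of closures of nonempty open sets — the closure $\overline U$ contains some $X_m$ and therefore contains all of $A$ except $a_0,\dots,a_{m-1}$, so $A\setminus\overline U$ is finite.

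The construction of $(X_n)_{n\in\w}$ is the technical core of the proof and, I expect, the main obstacle. I would carry it out by recursion on $n$, maintaining a closed nowhere dense copy $X_n$ of $\IQ\mathsf P^\infty$ inside $\IQ\mathsf P^\infty$ that contains the (still deep) tail $\{a_i:i\ge n\}$, misses $a_{n-1}$, and has regular complement. At the step from $X_n$ to $X_{n+1}$ one must thread a closed nowhere dense subcopy $X_{n+1}\subseteq X_n$ through $\{a_i:i\ge n+1\}$ while keeping the single point $a_n$ outside $X_{n+1}$. For this one argues in the style of the proof of Theorem~\ref{t:key}: since $(Y^\circ_i)_{i\in\w}$ is a coregular skeleton, the complements of the relevant closed ``thin'' sets are countable, regular and second-countable, hence metrizable and zero-dimensional, which supplies the closed-and-open sets needed to separate $a_n$ from the deep, closed, discrete set $\{a_i:i>n\}$; the depth of $\{a_i:i>n\}$ — which persists because it is cofinite in the deep set $A$ — is exactly what allows $X_{n+1}$ to be taken ``thick enough'' that the inductively superconnecting property of the assembled sequence holds and that the recursion can be continued. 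After verifying that $(X_n)_{n\in\w}$ satisfies the clauses of Definition~\ref{d:skeleton} for a canonical superskeleton and performing the symmetric construction for $B$, Corollary~\ref{c1} completes the proof.
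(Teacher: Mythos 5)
Your reduction to Corollary~\ref{c1} via matching level functions is the right frame, and your shallow case is essentially the paper's argument (the paper takes a single $m$ with $Y^\circ_m\cap(A\cup B)=\emptyset$ and re-indexes one skeleton for both sides, but your two-sided version works equally well). The gap is in the deep case: everything there hinges on the claim that for a deep closed discrete set $A=\{a_i:i\in\w\}$ with an \emph{arbitrary} enumeration there is a canonical superskeleton $(X_n)_{n\in\w}$ with $A\cap X_n=\{a_i:i\ge n\}$, and you neither prove this nor reduce it to anything in the paper. The difficulty is concrete: if, say, $a_1$ has level $0$ in the canonical superskeleton, then any closed set $X_1$ containing $a_1$ either isolates $a_1$ --- and then $\{a_1\}$ is a nonempty open subset of $X_1$ whose closure is $\{a_1\}$ and hence contains no $X_m$, since members of a superskeleton are infinite by Lemma~\ref{l:crowded} --- or must accumulate at $a_1$ through a crowded ``tentacle'' every relative neighborhood of which has closure swallowing some later $X_m$. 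Building such tentacles at every $a_i$ simultaneously, for every $n$, while keeping each $X_n$ closed, nowhere dense in $X_{n-1}$ and with regular complement, and coordinating with the \emph{later} terms $X_m$ (the inductively superconnecting property constrains $X_m$ by the open subsets of all earlier $X_n$, so a one-set-at-a-time recursion does not close up) is a construction on the order of Lemma~\ref{l:vnk}; ``thick enough'' does not discharge it. Note also that what you need is depth of the tail \emph{relative to} the constructed $X_n$ (closures of nonempty open subsets of $X_n$ must contain tails of $A$), which is a constraint on how $X_n$ is built, not a consequence of $A$ being deep in $X$.

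The paper avoids prescribing levels altogether. It chooses a rapidly increasing sequence $(n_k)$ so that $f$ and $f^{-1}$ move points by at most one block $[n_k,n_{k+1})$ of levels, shows that the points which must be pushed one block deeper already lie in $X_{n_k+1}$, and then sets $Y_k=U_k\cup X_{n_{k+1}}$ and $Z_k=V_k\cup X_{n_{k+1}}$ for suitable clopen subsets $U_k,V_k$ of the already closed and nowhere dense sets $X_{n_k+1}\setminus X_{n_{k+1}}$. Because the adjustments live inside the thin skeleton sets, closedness, nowhere density, coregularity and the superconnecting property are inherited for free, and $f(A\cap Y_k)=B\cap Z_k$ follows by bookkeeping with the sets $A_k^{=},A_k^{+},A_k^{-}$. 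To keep your strategy you would have to prove the level-realization claim as a separate theorem; as written, the deep case is not established.
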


\begin{proof}  By Theorem~\ref{p3}, the space $\IQ\mathsf P^\infty$ has a canonical superskeleton $(X_n)_{n\in\w}$. If both sets $A,B$ are shallow, then we can find a number $m\in\w$ such that $X_m$ is disjoint with the set $A\cup B$. Let $Y_0=X_0$ and $Y_n=X_{m+n}$ for $n\in\IN$. Observe that $(Y_n)_{n\in\w}$ is a canonical superskeleton for the space $\IQ\mathsf P^\infty$ such that $A\cup B\subseteq Y_0\setminus Y_1$. Since the space $\IQ\mathsf P^\infty$ is crowded, the sets $A,B$ are nowhere dense in $X$. Applying Corollary~\ref{c1}, we can find a homeomorphism $h:X\to X$ such that $h{\restriction}A=f$ and $h(Y_n)=Y_n$ for all $n\in\w$.

The case of deep sets $A,B$ is more tricky. Let $\ell:X\to\w$ be the function assigning to each point $x\in X$ the unique number $n\in\w$ such that $x\in X_n\setminus X_{n+1}$. The deepness of $A,B$ implies that for every $n\in\w$ the set $(A\cup B)\setminus X_n$ is finite. Choose an increasing sequence $(n_k)_{k\in\w}$ such that $n_0=0$ and for every $k\in\w$ the following conditions hold: 
\begin{itemize}
\item $n_{k+1}>\ell(f(a))$ for any $a\in A\setminus X_{n_k+1}$;
\item $n_{k+1}>\ell(f^{-1}(b))$ for any $b\in B\setminus X_{n_k+1}$.
\end{itemize}
For every $k\in \w$ consider the finite sets 
$$
\begin{aligned}
&A_k=A\cap X_{n_k}\setminus X_{n_{k+1}},\quad &&B_k=B\cap X_{n_k}\setminus X_{n_{k+1}},\\
&A_k^{=}=\{a\in A_k:f(a)\in X_{n_k}\setminus X_{n_{k+1}}\},\quad &&B_k^{=}=\{b\in B_k:f^{-1}(b)\in X_{n_k}\setminus X_{n_{k+1}}\},\\
&A_k^{+}=\{a\in A_k:f(a)\in X_{n_{k+1}}\},\quad &&B_k^{+}=\{b\in B_k:f^{-1}(b)\in X_{n_{k+1}}\},\\
&A_k^{-}=\{a\in A_k:f(a)\notin X_{n_k}\},\quad &&B_k^{-}=\{b\in B_k:f^{-1}(b)\notin 
X_{n_k}\}.
\end{aligned}
$$
\begin{claim} For any $k\in\w$ we have $$f(A_k^=)=B_k^=,\;\;f(A_k^+)=B^-_{k+1},\;\;f^{-1}(B_k^+)=A_{k+1}^-, \mbox{ \  and \ }f(A^-_{k+1})=B^+_k.$$
\end{claim}

\begin{proof} The equality $f(A_k^=)=B_k^=$ follows from the definition of the sets $A_k^=$ and $B_k^=$. To show that $f(A_k^+)=B^-_{k+1}$, take any $a\in A_k^+$. Then $f(a)\in B\cap X_{n_{k+1}}$ by the definition of $A_k^+$. The definition of the number $n_{k+2}$ guarantees that $\ell(f(a))<n_{k+2}$ and hence 
$$f(a)\in B\cap X_{n_{k+1}}\setminus X_{n_{k+2}}=B_{k+1}.$$ Since $a=f^{-1}(f(a))\notin X_{n_{k+1}}$, the point $f(a)$ belongs to $B_{k+1}^-$. Therefore, $f(A_k^+)\subseteq B^-_{k+1}$. Now take any point $b\in B_{k+1}^-$ and observe that the point $a=f^{-1}(b)$ dos not belong to $X_{n_{k+1}}$ by the definition of the set $B_{k+1}^-$. Assuming that $a\notin X_{n_k}$, we conclude that $\ell(b)=\ell(f(a))<n_{k+1}$ and hence $b\notin X_{n_{k+1}}$, which contradicts the choice of $b$. This contradiction shows that $a\in A\cap X_{n_k}\setminus X_{n_{k+1}}$. Since $b=f(a)\in X_{n_{k+1}}$, the point $a$ belongs to the set $A^+_{k}$ and hence $b\in f(A_k^+)$. Therefore, $f(A_k^+)=B_{k+1}^-$. By analogy we can prove that $f^{-1}(B^+_k)=A^-_{k+1}$ and hence $f(A^-_{k+1})=B^+_k$.
\end{proof}

\begin{claim} For every $k\in\w$ we have $A_k^+\cup B_k^+\subseteq X_{n_k+1}\setminus X_{n_{k+1}}$.
\end{claim}

\begin{proof} Assuming that $A_k^+\not\subseteq X_{n_{k+1}}$, we can find a point $a\in A_k^+\setminus X_{n_k+1}$. The choice of $n_{k+1}$ ensures that $n_{k+1}>\ell(f(a))$ and hence $f(a)\notin X_{n_{k+1}}$ , which contradicts the inclusion $a\in A_k^+$. This contradiction shows that  $A_k^+\subseteq X_{n_k+1}\setminus X_{n_{k+1}}$. By analogy we can prove that $B_k^+\subseteq X_{n_k+1}\setminus X_{n_{k+1}}$.
\end{proof} 

The coregularity of the skeleton $(X_n)_{n\in\w}$ guarantees that for every $k\in\w$ the countable space $X_{n_k+1}\setminus X_{n_{k+1}}$ is regular and hence metrizable and zero-dimensional. Then we can find a closed-and-open sets $U_k$ and $V_k$ in $X_{n_k+1}\setminus X_{n_{k+1}}$ such that $A_k\cap U_k=A_k^+$ and $B_k\cap V_k=B_k^+$. 

Let $Y_0=Z_0=X_0$ and for every $k\in\IN$ let $$Y_k:=U_k\cup X_{n_{k+1}}\quad\mbox{and}\quad Z_k:=V_k\cup X_{n_{k+1}}$$and observe that 
$Y_k=U_k\cup X_{n_{k+1}}\subseteq X_{n_k+1}\subset X_{n_k}\subseteq Y_{k-1}$. The nowhere density of the set $X_{n_k+1}$ in $X_{n_k}$ implies the nowhere density of $Y_k$ in $X_{n_k}$ and also in $Y_{k-1}$. By analogy we can show that $Z_k$ is nowhere dense in $Z_{k-1}$. It is easy to check that $(Y_k)_{k\in\w}$ and $(Z_k)_{k\in\w}$ are canonical superskeleta for $X$ such that $$A\cap Y_k\setminus Y_{k+1}=A_{k+1}^-\cup A_{k+1}^=\cup A_k^+\quad\mbox{ and }\quad B\cap Z_k\setminus Z_{k+1}=B_{k}^+\cup B_{k+1}^=\cup B_{k+1}^-$$for every $k\in\w$. This implies that $f(A\cap Y_k)=B\cap Z_k$ for every $k\in\w$. Since the spaces $A,B$ are discrete, the intersections $A\cap Y_k$ and $B\cap Z_k$ are nowhere dense in the crowded spaces $Y_k,Z_k$, respectively. Applying Theorem~\ref{t:main}, we can find a homeomorphism $h:X\to Y$ such that $h{\restriction}A=f$ and $h(Y_k)=Z_k$ for all $k\in\w$.
\end{proof}

 Since any finite set in a Hausdorff topological space is shallow, Theorem~\ref{t:hom} implies that the following finite homogeneity property of the space $\IQ\mathsf P^\infty$.

\begin{corollary}[Finite homogeneity of $\IQ\mathsf P^\infty$]
Any bijective function $f:A\to B$ between two finite subsets $A,B$ of the space $\IQ\mathsf P^\infty$ can be extended to a homeomorphism of $\IQ\mathsf P^\infty$.
\end{corollary}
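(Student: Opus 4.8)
The plan is to derive the corollary directly from Theorem~\ref{t:hom} by verifying that its hypotheses apply in the ``both shallow'' case. First I would record two elementary point-set facts about the ambient space. By Theorem~\ref{p3} (or just by definition of coregularity) the space $\IQ\mathsf P^\infty$ is coregular, hence Hausdorff; consequently every finite subset of $\IQ\mathsf P^\infty$ is closed (finite unions of singletons are closed in a $T_1$-space) and discrete (discreteness of a finite subspace of a $T_1$-space is automatic). Thus the given finite sets $A$ and $B$ are closed discrete subsets of $\IQ\mathsf P^\infty$, which is precisely the standing hypothesis of Theorem~\ref{t:hom}.

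Next I would invoke the observation made just before Theorem~\ref{t:hom}: any finite set in a Hausdorff space is shallow. Concretely, writing $A=\{a_1,\dots,a_k\}$, for each $i\le k$ the Hausdorff property of the infinite space $\IQ\mathsf P^\infty$ lets us choose a nonempty open set $U_i\subseteq\IQ\mathsf P^\infty$ with $a_i\notin\overline{U_i}$; then $A\cap(\overline U_{\!1}\cap\dots\cap\overline U_{\!k})=\emptyset$, so $A$ is shallow, and likewise $B$. Hence $A$ and $B$ fall into the ``both shallow'' alternative of the dichotomy in Theorem~\ref{t:hom}.

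With $A,B$ both shallow closed discrete subsets of $\IQ\mathsf P^\infty$ and $f\colon A\to B$ an arbitrary bijection, Theorem~\ref{t:hom} directly supplies a homeomorphism $h$ of $\IQ\mathsf P^\infty$ with $h{\restriction}A=f$ (in particular $h(A)=B$), which is exactly the required extension. I do not expect any real obstacle: the entire substance lies in Theorem~\ref{t:hom}, and the only things to check are the routine facts that finite subsets of a Hausdorff space are closed, discrete, and shallow.
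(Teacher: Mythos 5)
Your proposal is correct and matches the paper's own (one-line) derivation: the paper likewise notes that any finite set in a Hausdorff space is shallow and then applies Theorem~\ref{t:hom} in the ``both shallow'' case. Your explicit verification that finite subsets are closed, discrete, and shallow is exactly the routine checking the paper leaves implicit.
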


\begin{remark} By Lemma~\ref{l:product}, the product $\IQ\mathsf P^\infty\times\IQ\mathsf P^\infty$ is not coregular and hence cannot be homeomorphic to $\IQ\mathsf P^\infty$. Nonetheless, $\IQ\mathsf P^\infty\times\IQ\mathsf P^\infty$ contains a dense subspace homeomorphic to $\IQ\mathsf P^\infty$, see \cite{Stel}.
\end{remark}


\section{Acknowledgement}

The first author would like to thank MathOverflow user Fedor Petrov\footnote{\tt mathoverflow.net/q/286366} who turned his attention to the infinite projective space $\IQ\mathsf P^\infty$ and informed the authors about the paper \cite{GF} of Gelfand and Fuks.



\begin{thebibliography}{}


\bibitem{BMT} T.~Banakh, J.~Mioduszewski, S.~Turek, {\em On continuous self-maps and homeomorphisms of the Golomb space}, Comment. Math. Univ. Carolin. {\bf 59}:4 (2018) 423--442.

\bibitem{BST} T.~Banakh, D.~Spirito, S.~Turek, {\em The Golomb space is topologically rigid}, preprint\newline (https://arxiv.org/abs/1912.01994).

\bibitem{BYT} T.~Banakh, Y.~Stelmakh, S.~Turek, {\em The Kirch space is topologically rigid}, in preparation.

\bibitem{BP} C.~Bessaga, A. Pe\l czy\'nski, {\em Selected Topics in Infinite-Dimensional Topology}, MM {\bf 58}, Polish Sci. Publ., Warsaw, 1975.

\bibitem{BRZ} T.~Banakh, T.~Radul, M.~Zarichnyi, {\em Absorbing sets in infinite-
dimensional manifolds}, Math. Studies, Monog. Ser. {\bf 1}, VNTL Publ.,
Lviv, 1996.

\bibitem{Brown} M. Brown, {\em A countable connected Hausdorff space}, Bull. Amer. Math. Soc. {\bf 59} (1953), 367. Abstract \#423.

\bibitem{CDM} D.~Curtis, T.~Dobrowolski, J.~Mogilski, {\em Some applications of the topological characterizations of the sigma-compact spaces $l\sp{2}\sb{f}$ and $\Sigma $}, Trans. Amer. Math. Soc. {\bf 284}:2 (1984)  837--846.

\bibitem{Eng} R.~Engelking, {\em General Topology}, Heldermann Verlag, Berlin, 1989.

\bibitem{Hart} R.~Hartshorne, {\em Foundations of projective geometry},
Harvard University. W. A. Benjamin, Inc., New York, 1967.

\bibitem{GF} I.M.~Gelʹfand, D.B.~Fuks, {\em The topology of noncompact Lie groups}, Funct. Analysis and Its Appl. {\bf 1}:4 (1967) 285--295.

\bibitem{Golomb59} S.~Golomb, {\em A connected topology for the integers}, Amer. Math. Monthly, {\bf 66} (1959), 663--665.

\bibitem{Golomb61} S.~Golomb, {\em Arithmetica topologica}, in: General Topology and its Relations to Modern Analysis and Algebra (Proc. Sympos., Prague, 1961), Academic Press, New York; Publ. House Czech. Acad. Sci., Prague (1962) 179--186; available at {\tt https://dml.cz/bitstream/handle/10338.dmlcz/700933/Toposym\_01-1961-1\_41.pdf}).

\bibitem{Kirch} A.M.~Kirch, {\em A countable, connected, locally connected Hausdorff space}, Amer. Math. Monthly {\bf 76} (1969), 169--171.

\bibitem{Kryftis} A.~Kryftis, {\em A constructive approach to affine and
projective planes}, Ph.D. Dissertation, Univ. of Cambridge, 2015 ({\tt https://arxiv.org/pdf/1601.04998.pdf}).

\bibitem{vM} J. van Mill, {\em Infinite-Dimensional Topology, Prerequisites and Introduction}, North-Holland Math. Library {\bf 43}, Elsevier Sci. Publ. B.V., Amsterdam, 1989.

\bibitem{Mog} J.~Mogilski, {\em Characterizing the topology of infinite-dimensional $\sigma $-compact manifolds}, Proc. Amer. Math. Soc. {\bf 92}:1 (1984) 111--118.

\bibitem{Sakai} K.~Sakai, {\em Topology of infinite-dimensional manifolds}, Springer (to appear).


\bibitem{Stel} Ya. Stelmakh, {\em Almost coregular  topological spaces}, in preparation.



\end{thebibliography}
\end{document}